\numberwithin{equation}{section}
\theoremstyle{plain}
\newtheorem{lemma}{Lemma}
\newtheorem{proposition}[lemma]{Proposition}
\newtheorem{theorem}{Theorem}
\newtheorem{corollary}[lemma]{Corollary}
\theoremstyle{definition}
\newtheorem{definition}[lemma]{Definition}
\newtheorem{example}[lemma]{Example}
\newtheorem{remark}[lemma]{Remark}
\title{Signed Real Hurwitz numbers}
\author{Thomas Guidoni\thanks{Supported by the ERC Consolidator Grant ROGW-864919.}}
\date{}
\begin{document}
\sloppy

\maketitle

\begin{abstract}
In this paper, we enumerate with signs the holomorphic maps between Real Riemann surfaces. We relate the signs and the numbers obtained to the Real Gromov-Witten theory of the target. For a finite group $G$ with a non-trivial morphism to $\{ \pm 1 \}$, we present a signed count of principal $G$-bundles yielding a Klein Topological Quantum Field Theory. 
\end{abstract}

\tableofcontents

\section{Introduction}

Classical Hurwitz theory is the enumerative study of ramified covers of a given Riemann surface with prescribed ramification profile. It is closely related to the representation theory of the symmetric group $\mathfrak{S}_d$ : degree $d$ Hurwitz numbers admit closed formulas in terms of the irreducible characters of $\mathfrak{S}_d$. This connection extends to any finite group $G$ if one counts principal $G$-bundles instead of covers. It yields a Topological Quantum Field Theory, or equivalently the structure of a Frobenius algebra on the center $Z \mathbb{C} G$ of the group algebra of $G$.\\

From the perspective of Gromov-Witten theory, the Hurwitz numbers are the relative Gromov-Witten invariants of the target. In \cite{articleGeorgievaIonel}, Georgieva and Ionel express the relative Real Gromov-Witten invariants of Real Riemann surfaces in terms of the irreducible characters of the symmetric groups. These numbers arise as \textit{signed} counts of ramified Real covers of a given Real Riemann surface with prescribed ramification profile. Real means that both the source and the target carry an anti-holomorphic involution with which the covering map commutes. The signs come from orientation of the moduli spaces of stable Real maps and there is no explicit desciption of the sign of a given Real cover.\\

In this paper, we first define explicitly a sign of a Real ramified cover in terms of topological data of the target. In Theorem \ref{theoremComparSign}, we prove that it coincides with the sign defined abstractly in \cite{articleGeorgievaIonel}. Thus, it can be understood as the $1$-dimensional counterpart of Welschinger's signs \cite{articleWelschinger}.

The sign is defined using the monodromy representation and the sign of a permutation as follows. Let $(X,\sigma)$ be a connected Real Riemann surface such that $\sigma$ has no fixed point and $f : (X',\sigma') \rightarrow (X,\sigma)$ be a Real holomorphic map. A topological cover is obtained by taking the quotients by the involutions and removing the branch locus $B$. This yields a monodromy representation $\rho_f$ from the fundamental group of $(X \setminus B) / \sigma$ to a symmetric group. The sign of $f$ is the sign of the image by $\rho_f$ of a representative of the Poincaré dual of the first Stiefel-Whitney class of $(X \setminus B) / \sigma$. The \textit{signed Real Hurwitz numbers} are defined as the signed automorphism-weighted count of Real ramified covers of $(X,\sigma)$ with prescribed ramification profile.\\

More generally, let $G$ be a finite group and \begin{equation}
\varepsilon : G \rightarrow \{ \pm 1 \}
\end{equation} be a non-trivial group morphism. We define similarly a sign of a principal $G$-bundle over $(X \setminus B) / \sigma$ using the monodromy representation morphism of principal $G$-bundles and the morphism $\varepsilon$. The $(G,\varepsilon)$-Real Hurwitz number is obtained as the signed automorphism-weighted count of principal $G$-bundles with prescribed restriction around the punctures $B$. For the symmetric group $\mathfrak{S}_d$ with the only non-trivial morphism $\varepsilon$, the $(\mathfrak{S}_d,\varepsilon)$-Real Hurwitz numbers and the signed Real Hurwitz numbers coincide. 

When the irreducible characters of $G$ are real-valued, the $(G,\varepsilon)$-Real Hurwitz numbers do not depend on the representative of the first Stiefel-Whitney class chosen. We derive a formula in terms of the irreducible characters of $G$. This expression involves the \textit{signed Frobenius-Schur indicator} $SFS_{G,\varepsilon}$ studied in Section \ref{sectionRepresentationPairs} and reads \begin{equation}
\mathbb{R} H_{g,G,\varepsilon}^\bullet(\bm{c}) = \sum_{\rho^T = \rho} \left( SFS_{G,\varepsilon}(\rho) \frac{dim(\rho)}{\# G} \right)^{1-g} \prod_{i=1}^n f_{\bm{c}_i}(\rho)
\end{equation} where $g$ is the genus of $X$, $\rho^T$ is the representation obtained from $\rho$ by tensoring with $\varepsilon$, $\bm{c}$ is a sequence of conjugacy classes of $G$ and $f_c(\rho)$ is defined in (\ref{equationFcRho}). For the symmetric group $\mathfrak{S}_d$ with the sign morphism, the irreducible representations are labelled by the set of integer partitions $\mu$ of $d$, and we prove that the signed Frobenius-Schur indicator is given by \begin{equation}
	SFS_{\mathfrak{S}_d,\varepsilon}(\rho_\mu) = \left\{ \begin{array}{ll}
		(-1)^{\frac{d-r(\mu)}{2}} & \text{ if } \rho_\mu^T = \rho_\mu, \\
		0 & \text{ otherwise}.
	\end{array} \right.
\end{equation} where $r(\mu)$ is the length of diagonal of the Young diagram of $\mu$, see Subsection \ref{subsectionYoungDiagram}. Thus, the signed Real Hurwitz numbers are equal to the relative Real Gromov-Witten invariants studied in \cite{articleGeorgievaIonel}.

These relative Gromov-Witten invariants arise as integrals over a moduli space of pseudo-holomorphic maps whose domain has no marked point. Allowing marked points with \textit{stationnary insertions} corresponds on the Hurwitz side to consider the \textit{Hurwitz numbers with completed cycles} \cite{articleOkounkovPandharipande}. In Subsection \ref{subsectionCompletedCyclesReal}, we introduce the \textit{signed Real Hurwitz numbers with completed cycles}. It will be proved in a subsequent publication that they are the relative Real Gromov-Witten invariants with stationnary insertions.\\

The $(G,\varepsilon)$-signed Real Hurwitz numbers satisfy degeneration formulas that arise as one degenerates simulaneously pairs of conjugated circles in the target, see Proposition \ref{propositionDegene}. They are equivalent to the fact that $(G,\varepsilon)$-Real Hurwitz numbers can be assembled to form a \textit{Klein Topological Quantum Field Theory}. We describe this from the equivalent perspective of the structure of \textit{extended Frobenius algebras} \cite{articleTuraevTurner} that they define on $Z \mathbb{C} G$, which extends the standard Frobenius algebra structure on $Z \mathbb{C} G$. Thus, we obtain a map \begin{equation}
\left\{ \begin{array}{c}
\text{Non-trivial} \\
\varepsilon : G \rightarrow \{ \pm 1 \}
\end{array}  \right\} \rightarrow \left\{ \begin{array}{c}
\text{ Extensions of the} \\
\text{standard Frobenius algebra} \\
\text{structure on } Z \mathbb{C} G
\end{array}  \right\}.
\end{equation} for groups $G$ whose characters are real-valued. These extensions differ from the one obtained in \cite{articleLoktevNatanzon}. The latter would correspond in our setting to the trivial group morphism $\varepsilon$.

The degeneration of a genus $1$ Riemann surface into a pair of genus $0$ Riemann surfaces exchanged by the involution leads to a surprising combinatorial formula : \begin{equation}
\# \{ \rho \ | \ \rho^T = \rho \} = \sum_{c} \varepsilon(c)
\end{equation} where the sum is over the conjugacy classes of $G$. For the symmetric groups, the lef-hand side is the number of symmetric Young diagrams of a given size.\\

In Section \ref{sectionExtensionNonEmpty}, we consider targets $(X,\sigma)$ for which the fixed locus of $\sigma$ is not empty, and Real holomorphic maps without Real branch points. We define signs and the corresponding signed Real Hurwitz numbers, and prove that the signed Real Hurwitz numbers obtained do not depend on the involution $\sigma$ of the target. \\

The paper is organized as follows. Section \ref{sectionPremliminaries} contains preliminary results. In Section \ref{sectionRepresentationPairs}, we study the signed Frobenius-Schur indicator which is needed to express the $(G,\varepsilon)$-Real Hurwitz numbers and signed Real Hurwitz numbers. The signs and the corresponding Real Hurwitz numbers are introduced in Section \ref{sectionConstruction} for targets without fixed locus and we the formulas in terms of the irreducible characters are obtained. In Subsection \ref{subsectionDoubletContribution}, we give an expression of the \textit{doublet contributions} to the signed Real Hurwitz numbers in terms of Hurwitz numbers. Section \ref{sectionDegenerationFormulas} studies the degeneration formulas and its consequences. In Subsection \ref{subsectionDegenerationSigns}, we study the degeneration process at the more accurate level of the signs. It yields Theorem \ref{theoremComparSign} that relates the signs of Section \ref{sectionConstruction} to those defined abstractly in \cite{articleGeorgievaIonel}. The case of targets with fixed points is discussed in Section \ref{sectionExtensionNonEmpty}.

\section{Preliminaries}
\label{sectionPremliminaries}

	\subsection{Representation theory of finite groups}
	\label{subsectionRepresentationTheory}

Let $G$ be a finite group. We denote by $C(G)$ the set of conjugacy classes of $G$ and by $Irr(G)$ the set of (isomorphism classes of) irreducible representations of $G$ on a finite-dimensional complex vector space. They are finite sets of the same cardinality. The vector space \begin{equation}
Z\mathbb{C}G = \bigoplus_{c \in C(G)} \mathbb{C} \cdot c  
\end{equation} admits an algebra structure coming from the product in $G$, considering $c$ as the sum of its elements. It is commutative, associative and unital, with unit the conjugacy class $\{ e \}$ of the identity element $e$ of $G$. There exists moreover an idempotent basis $(v_\rho)$ labelled  by the set $Irr(G)$. Given $\rho \in Irr(G)$, its character $\chi_\rho : C(G) \rightarrow \mathbb{C}$ is defined by \begin{equation}
\chi_\rho (c) = \mathrm{Tr}(\rho(g)) \text{ for any } g \in c.
\end{equation} The idempotent basis consists of the elements \begin{equation}
v_\rho = \sum_{c \in C(G)} \frac{dim(\rho)}{\# G} \overline{\chi_\rho(c)} \cdot c.
\label{equationBasis1}
\end{equation} The elements of the standard basis read  \begin{equation}
c = \sum_{\rho \in Irr(G)} f_c(\rho) \cdot v_\rho.
\label{equationBasis2}
\end{equation} where \begin{equation}
f_c(\rho) = \frac{\# c}{dim(\rho)} \chi_\rho(c).
\label{equationFcRho}
\end{equation} The vector \begin{equation}
\mathfrak{K} = \sum_{c \in C(G)} z_c c \overline{c} = \sum_{\rho \in Irr(G)} \left( \frac{\# G}{dim(\rho)} \right)^2 \cdot v_\rho,
\end{equation} where $z_c = \tfrac{\# G}{ \# c}$ and $\overline{c} = \{ g^{-1}, \ g \in c \}$, plays a key role in the classical Hurwitz theory.

	\subsection{Young diagrams}
	\label{subsectionYoungDiagram}

Fix a non-negative integer $d$. In the case of the symmetric group $\mathfrak{S}_d$ of permutations of the set $\{ 1,\ldots,d \}$, the sets $C(\mathfrak{S}_d)$ and $Irr(\mathfrak{S}_d)$ are both canonically labeled by the set of \textit{integer partitions} of $d$. Such a partition is a sequence $\lambda = (\lambda_1,\ldots,\lambda_l)$ of positive integers such that $\lambda_1 + \ldots + \lambda_l = d$ and $\lambda_1 \geq \ldots \geq \lambda_l \geq 1$. The \textit{size} of the partition $\lambda$ is $| \lambda | = \lambda_1 + \ldots + \lambda_l = d$ and its \textit{length} is $l(\lambda) = l$. 

The identification between the of partitions of $d$ and $C(\mathfrak{S}_d)$ is obtained by sending $\lambda$ to the conjugacy class $c_\lambda$ of permutations whose cycle type is $\lambda$. The identification of a partition $\mu$ and an irreducible representation $\rho_\mu$ is more complicated ; see \cite[Lecture 4]{bookFultonHarris}. If the context is clear enough, we might denote by $\lambda$ and $\mu$ respectively $c_\lambda$ and $\rho_\mu$. For instance, $\chi_\mu(\lambda)$ refers to $\chi_{\rho_\mu}(c_\lambda)$ and $f_\lambda(\mu)$ to $f_{c_\lambda}(\rho_\mu)$. 

The case $d=0$ refers to the trivial group $\mathfrak{S}_0$. There is only one partition, the empty partition, which corresponds to the unit of the group and to the trivial representation.

The symmetric group $\mathfrak{S}_d$ admits a unique non-trivial morphism \begin{equation}
\varepsilon : \mathfrak{S}_d \rightarrow \{ \pm 1 \}.
\end{equation} It associates a permutation to its sign. The notation $\varepsilon(\lambda)$ refers to the sign of any permutation in the conjugacy class $c_\lambda$.\\

Partitions of $d$ can be depicted as \textit{Young diagrams}. A Young diagram is a sequence of rows of boxes, left-justified, such that the lengths are non-increasing, see Figure \ref{figureYoung}. To a partition $\lambda$, we associate the Young diagram whose $i$-th row contains $\lambda_i$ boxes. The \textit{diagonal length} $r(\lambda)$ of a partition $\lambda$ is the number $\{ i \ | \lambda_i \geq i \}$ of diagonal boxes in the associated Young diagram. The pictorial description of partitions allows for a natural involution $\mu \mapsto \mu^T$ on the set of partitions of $d$. On the level of Young diagrams, it is obtained by sending a diagram to its symmetric with respect to the diagonal. A partition is said to be \textit{symmetric} if $\mu = \mu^T$.

\begin{figure}[h!]
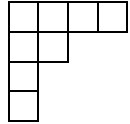

\centering
$\yng(4,2,1,1)$
\caption{Young diagram corresponding to the partition $(4,2,1,1)$ of $8$.}
\label{figureYoung}
\end{figure}

	\subsection{Completed cycles}
	\label{subsectionCompletedCycles}

In the context of Hurwitz theory, completed cycles were introduced by Okounkov and Pandharipande \cite{articleOkounkovPandharipande} to express the stationnary Gromov-Witten with descendents in terms of Hurwitz numbers.

Following \cite{articleKerovOlshanki,articleOkounkovPandharipande}, we extend the functions $f_\lambda(\mu)$ by \begin{equation}
f_\lambda(\mu ) = \binom{m_1(\lambda) + |\mu| - | \lambda |}{m_1(\lambda)} f_{(\lambda,1,\ldots,1)}(\mu)
\label{equationfExtended}
\end{equation} where $\lambda,\mu$ might not have the same size. The number $m_1(\lambda)$ is the number of parts $\lambda_i$ that equal $1$. The extended function $f_\lambda(\mu )$ vanishes for $|\mu| < | \lambda |$. We introduce the functions \begin{equation}
p_k^*(\mu) = k!c_{k+1}+ \sum_{i=1}^\infty \left( \mu_i - i + \frac{1}{2} \right)^k - \left( - i + \frac{1}{2} \right)^k
\end{equation} for positive integers $k$, where the coeffcients $(c_k)$ appear in the expansion \begin{equation}
\frac{1}{\mathcal{S}(z)} = \sum_{k=0}^\infty c_k z^k \text{ with } \mathcal{S}(z) = \frac{\sinh(z/2)}{z/2}
\end{equation} and $(k)$ is the partition of $k$ with one part. By \cite{articleKerovOlshanki}, there exists coefficients $(q_{k,\nu})$ for $|\nu | < k$ such that \begin{equation}
\frac{1}{k} p_k^* = f_{(k)} + \sum_{|\nu|} q_{k,\nu} f_{\nu}.
\label{equationCompletedCycles}
\end{equation} For instance, $q_{k,\emptyset} = (k-1)! c_{k+1}$ and we interpret the formula as $q_{k,(k)} = 1$.

\begin{definition}[\cite{articleOkounkovPandharipande}]
The \textit{$k$-th completed cycle} is the formal sum of integer partitions \begin{equation}
	\overline{(k)} = (k) + \sum_{|\nu| < k} q_{k,\nu} \nu.
\end{equation}
\end{definition}

\begin{example}[\cite{articleOkounkovPandharipande}]
The first few completed cycles are \begin{equation}
\begin{array}{l}
	\overline{(1)} = (1) - \frac{1}{24} (\emptyset) \\
	\overline{(2)} = (2) \\
	\overline{(3)} = (3) + (1,1) + \frac{1}{12} (1) + \frac{7}{2880} (\emptyset) \\
	\overline{(4)} = (4) + 2 (2,1) + \frac{5}{4} (2).
\end{array}
\end{equation}
\end{example}

By \cite{articleOkounkovPandharipande}, the functions $p_k^*$ satisfy $p_k^*(\mu^T) = (-1)^{k-1} p_k(\mu)$, while the functions $f_\nu$ satisfy $f_\nu(\mu^T) = \varepsilon(\nu) f_\nu(\mu)$. The functions $(f_\nu)$ are linearly independant, therefore $q_{k,\nu}$ vanishes unless $\varepsilon(\nu) = (-1)^{k-1} $.

	\subsection{Complex Hurwitz numbers}
	\label{subsectionClassiclaHurwitz}	

In this subsection, we review the classical Hurwitz numbers. The terminology \textit{Complex} is non-standard - we use it to stress the difference with the Real case defined in Section \ref{sectionConstruction}.

\begin{definition}[(Complex) Hurwitz numbers]
Let $g,d,n$ be non-negative integers, $X$ a closed connected genus $g$ Riemann surface and $B = \{ b_1,\ldots,b_n \}$ a set of distinct points of $X$. Given $n$ integer partitions  $\bm{\lambda} = (\bm{\lambda}_1,\ldots,\bm{\lambda}_n)$ of $d$, the \textit{connected (Complex) Hurwitz number} $H_{g,d}(\bm{\lambda})$ is the weighted-number of isomorphism classes of degree $d$ holomorphic maps $f : X' \rightarrow X$ such that \begin{enumerate}[label = (\arabic*)]
	\item $X'$ is a connected Riemann surface,
	\item The ramification profile over $b_i$ is given by the partition $\bm{\lambda}_i$ and
	\item $f$ is unramified over $X \setminus B$.
\end{enumerate} The weight of the isomorphism class $[f]$ is $1 / \# \mathrm{Aut}(f)$. The \textit{disconnected (Complex) Hurwitz number} $H_{g,d}^\bullet(\bm{\lambda})$ is defined similarly by removing the connectedness assumption in (1).
\label{definitionHurwitz}
\end{definition}

The connected and disconnected (Complex) Hurwitz numbers\footnote{In the litterature, the subscript $g$ often denote the genus of the source when the target is $\mathbb{P}^1$. This is not the convention we use in the present paper. The subscript $g$ refers to the genus of the target, and the genus of the source can be computed using the Riemann-Hurwitz formula.} are related by an exponential formula on their generating series ; see \cite{bookCavalieriMiles}. In particular, the knowledge of all the disconnected Hurwitz numbers with a given value of $g,n$ determines every connected Hurwitz number with the same $g,n$. 

Using monodromy representation, the disconnected Hurwitz numbers of degree $d$ can be expressed as the number of solutions to an equation in $\mathfrak{S}_d$. Indeed, in the basis $(c_\lambda)$, the coefficient of the unit in the product \begin{equation}
\frac{1}{d!} \mathfrak{K}^g c_{\bm{\lambda}_1} \ldots c_{\bm{\lambda}_n}
\end{equation} is $H_{g,d}^\bullet(\bm{\lambda}).$ Computing this product in the idempotent basis, one recovers the well-known formula \begin{equation}
H_{g,d}^\bullet(\bm{\lambda}) = \sum_{\mu} \left( \frac{dim(\mu)}{d!} \right)^{2 - 2g} \prod_{i=1}^n f_{\bm{\lambda}_i}(\mu)
\label{equationClosedFormulaComplex}
\end{equation} where \begin{equation}
f_\lambda( \mu ) = \frac{\# c_{\lambda}}{dim(\mu)} \chi_\mu ( \lambda ).
\end{equation} The disconnected (Complex) Hurwitz numbers can be generalized in two different ways. \begin{itemize}
	\item We can replace the group $\mathfrak{S}_d$ by any finite group $G$ and the partitions $\bm{\lambda}$ by a sequence $\bm{c}$ of conjugacy classes of $G$. It leads to numbers \begin{equation}
	H_{g,G}^\bullet(\bm{c}) = \sum_{\rho \in Irr(G)} \left( \frac{dim(\rho)}{\# G} \right)^{2-2g} \prod_{i=1}^n f_{\bm{c}_i}(\rho)
	\label{equationClosedFormulaComplexG}
	\end{equation} corresponding to counts of isomorphism classes $[P]$ of principal $G$-bundles over $X \setminus B$ whose monodromy around the $i$-th puncture belongs to the conjugacy class $\bm{c}_i$, weighted by $1 / \# Aut(P)$. We call them \textit{$G$-(Complex) Hurwitz numbers}.
	\item In the case of the symmetric groups, we can introduce completed cycles \cite{articleOkounkovPandharipande}. This leads to numbers \begin{equation}
H_{g,d}^\bullet(\bm{\lambda};\bm{k}) = \sum_{| \mu | = d} \left( \frac{dim(\mu)}{d!} \right)^{2-2g} \prod_{i=1}^n f_{\bm{\lambda}_i}(\mu) \prod_{j=1}^m \frac{p^*_{\bm{k}_j}(\mu)}{\bm{k}_j}
\label{equationHurwitzCompletedCycles}
\end{equation} in the disconnected case. These can be expressed as a weighted sum of isomorphism classes of ramified covers together with the data of \textit{distinguished subsets} in the fiber of $m$ additionnal points ; see Subsection \ref{subsectionCompletedCyclesReal} for a definition in the Real case.
\end{itemize}

	\subsection{Frobenius algebras}
	\label{subsectionFrobenius}

\begin{definition}
A \textit{Frobenius algebra} (over $\mathbb{C}$) is an associative commutative $\mathbb{C}$-algebra $(A,\star)$, with unit $e \in A$, together with a symmetric non-degenerate bilinear form $\eta$ which satisfies \begin{equation}
\eta(x \star y,z) = \eta(x, y \star z) \text{ for all } x,y,z \in A.
\end{equation}
\end{definition}

The axioms of a Frobenius algebra reflect the relations between the morphisms in the category of oriented $(1+1)$-cobordisms, see \cite{bookKock}.

\begin{example}
Let $G$ be a finite group. The center $Z \mathbb{C} G$ of the algebra of $G$ has the structure of an associative commutative and unital $\mathbb{C}$-algebra, see Subsection \ref{subsectionRepresentationTheory}. Let us denote it $(Z \mathbb{C} G,\star,e)$. The bilinear form $\eta$ sending $x,y$ to the coefficient along the unit in the standard basis of the product $x \star y$ endows $(Z \mathbb{C} G,\star,e)$ with the structure of a Frobenius algebra.

Using the relation between Frobenius algebras and (1+1) topological field theories, this Frobenius algebra induces a linear map \begin{equation}
Z_X : Z \mathbb{C} G^{\otimes m} \rightarrow Z \mathbb{C} G^{\otimes n}
\end{equation} for any Riemann surface $X$ with $m+n$ marked points, $m$ of them being understood as inputs and $n$ as outputs. The linear maps $Z_X$ where $X$ is a sphere with one input, a sphere with two inputs or a sphere with two inputs and one output is defined to be $e$, $\eta$ and $\star$ respectively. The other linear maps are obtained by degenerations of $X$, see \cite{bookKock}. If $X$ is connected of genus $g$, the linear map $Z_X$ can be expressed in the standard basis by sending $\bm{c}_1 \otimes \ldots \otimes \bm{c}_m$ to \begin{equation}
\sum_{\bm{c'}} z_{\bm{c'}_1} \ldots z_{\bm{c'}_n} H^\bullet_{g,G}(\bm{\overline{c}},\bm{c'}) \bm{c'}_1 \otimes \ldots \otimes \bm{c'}_n
\end{equation} where $z_c = \# G / \# c$ and $\overline{c} = \{h^{-1}, \ h \in c\} $ for a conjugacy class $c$ of $G$. It is straightforward to check that $\mathfrak{K}$ is obtained as $Z_X$ for a torus with one output. Another vector of interest is $\Delta = Z_X$ for a sphere with two outputs. Explicitly, \begin{equation}
\Delta = \sum_{c} z_c c \otimes \overline{c}.
\end{equation}
\end{example}

\section{Representation theory of a pair $(G,\varepsilon)$}
\label{sectionRepresentationPairs}

In this section, we study the \textit{signed Frobenius-Schur indicator} of a finite group $G$ with a non-trivial group morphism $\varepsilon : G \rightarrow \{ \pm 1 \}$, see Definition \ref{definitionSFS}. Lemma \ref{lemmaSFS} expresses the signed Frobenius-Schur indicator of $(G,\varepsilon)$ in terms of the (non-signed) Frobenius-Schur indicators of $G$ and $\ker(\varepsilon)$. This is used in Subsection \ref{subsectionSFSsymmetric} to obtain an explicit formula for the signed Frobenius-Schur indicator of the symmetric groups $\mathfrak{S}_d$, which recovers the formula obtained in \cite{articleGeorgievaIonel}.

	\subsection{Signed Frobenius-Schur indicator}
	\label{subsectionSFS}

Let $G$ be a finite group and $\varepsilon : G \rightarrow \{ \pm 1 \}$ be a non-trivial group morphism. Denote by $H$ the kernel of $\varepsilon$. An element $\tau \in G$, and by extension its conjugacy class, is said to be \textit{even} or \textit{odd} if it satisfies respectively $\varepsilon(\tau)=1$ or $\varepsilon(\tau)=-1$.

\begin{definition}[\cite{articleGeorgievaIonel}]
The \textit{signed Frobenius-Schur indicator} of the pair $(G,\varepsilon)$ is the function $SFS_{G,\varepsilon} : \{  C(G) \rightarrow \mathbb{C} \}  \rightarrow  \mathbb{C}$ defined by \begin{equation}
SFS_{G,\varepsilon} (\chi) = \frac{1}{\# G} \sum_{\tau \in G} \varepsilon (\tau) \chi(\tau^2).
\end{equation}
\label{definitionSFS}
\end{definition} In \cite{articleGeorgievaIonel}, the signed Frobenius-Schur indicator has been computed explicitly for the symmetric groups $\mathfrak{S}_d$ with the sign morphism $\varepsilon$ using the Weyl formula. The purpose of the present subsection is to provide a similar result for any pair $(G,\varepsilon)$. Since the pair $(G,\varepsilon)$ is fixed, we omit the subscript $(G,\varepsilon)$ for the rest of the subsection. We will often denote by $SFS(\rho)$ the number $SFS(\chi_\rho)$ for $\rho \in Irr(G)$.\\

In order to compute $SFS(\chi)$, let us recall the \textit{Frobenius-Schur indicator} \cite[\S 3.5]{bookFultonHarris} of $G$. It is defined on any central function $\chi$ by \begin{equation}
FS_G(\chi) = \frac{1}{\# G} \sum_{\tau \in G} \chi(\tau^2).
\end{equation} A direct computation shows that \begin{equation}
SFS(\chi) + FS_G(\chi) = FS_H(\chi). 
\label{equationSFSvsFS}
\end{equation} Thus, the knowledge of $FS_G$ and $FS_H$ is enough to determine $SFS$. It turns out that the values of the Frobenius-Schur indicator on the characters of irreducible representations are known \cite[\S 3.5]{bookFultonHarris}. \begin{enumerate}[label = (\alph*)]
	\item Suppose that the irreducible representation $\rho$ can be defined over $\mathbb{R}$. Then, $FS(\chi_\rho) = 1$. We call $\rho$ a \textit{real irreducible representation}.
	\item Suppose that the character $\chi_\rho$ of the irreducible representation $\rho$ is not real-valued. Then, $FS(\chi_\rho) = 0$. We call $\rho$ a \textit{complex irreducible representation}.
	\item Suppose that the irreducible representation $\rho$ does not satisfy (a) or (b). Then, $FS(\chi_\rho) = -1$. We call $\rho$ a \textit{quaternionic irreducible representation}.
\end{enumerate} 

In order to compute $SFS(\chi_\rho)$ for $\rho \in Irr(G)$ using (\ref{equationSFSvsFS}) and this knowledge on the Frobenius-Schur indicator, we describe how the restriction of $\rho$ to $H$ splits as a sum of irreducible representations of $H$. The morphism $\varepsilon : G \rightarrow \{ \pm 1 \} \subseteq GL(\mathbb{C})$ defines an irreducible representation on $\mathbb{C}$, that we also denote by $\varepsilon$. Its character is \begin{equation}
\chi_\varepsilon = \varepsilon.
\end{equation} Taking the tensor product with $\varepsilon$ defines an involution $\rho \mapsto \rho^T$ of $Irr(G)$. An irreducible representation is said to be \textit{symmetric} if $\rho^T$ and $\rho$ are isomorphic. Since \begin{equation}
\chi_{\rho^T}(\tau) = \varepsilon(\tau) \chi_\rho(\tau),
\end{equation} the irreducible representation $\rho$ is symmetric if and only if $\chi_\rho(\tau) = 0$ any time $\varepsilon(\tau) = -1$. By \cite[\S 5.1]{bookFultonHarris} : \begin{itemize}
	\item In the case $\rho$ is symmetric, its restriction to $H$ splits as the sum of two irreducible representations which are both real, both complex or both quaternionic.
	\item Otherwise, the restriction of $\rho$ to $H$ is irreducible.
\end{itemize}

\begin{lemma}
Let $G$ be a finite group and $\varepsilon : G \rightarrow \{ \pm 1 \}$ a non-trivial morphism. The signed Frobenius-Schur indicator $SFS(\chi_\rho)$ for $\rho \in Irr(G)$ is computed using exactly one of the following cases. \begin{enumerate}[label = (\arabic*)]
	\item $\rho$ is a real representation. \begin{enumerate}[label = (1\alph*)]
		\item If $\rho$ is symmetric and its restriction to $H$ splits as a sum of two real representations, then $SFS( \chi_\rho ) = 1$. 
		\item If $\rho$ is symmetric and its restriction to $H$ splits as a sum of two complex representations, then $SFS( \chi_\rho ) = -1$.
		\item If $\rho$ is not symmetric, then $SFS( \chi_\rho ) = 0$.
\end{enumerate}
	\item $\rho$ is a complex representation. \begin{enumerate}[label = (2\alph*)]
		\item If $\rho$ is symmetric, then $SFS( \chi_\rho ) = 0$.
		\item If $\rho$ is not symmetric, then $SFS( \chi_\rho )$ equals the Frobenius-Schur indicator of $\chi_{\rho \vert_H}$, which can take any value in $\{1,0,-1 \}$.
		\end{enumerate}
	\item $\rho$ is a quaternionic representation. \begin{enumerate}[label = (3\alph*)]
		\item If $\rho$ is symmetric and its restriction to $H$ splits as a sum of two quaternionic representations, then $SFS( \chi_\rho ) = -1$.
		\item If $\rho$ is symmetric and its restriction to $H$ splits as a sum of two complex representations, then $SFS( \chi_\rho ) = 1$.
		\item If $\rho$ is not symmetric, then $SFS( \chi_\rho ) = 0$.
\end{enumerate}
\end{enumerate}
\label{lemmaSFS}
\end{lemma}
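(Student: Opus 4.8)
The plan is to deduce everything from the additive relation (\ref{equationSFSvsFS}), which I would first rewrite as $SFS(\chi_\rho) = FS_H(\chi_\rho) - FS_G(\chi_\rho)$, noting that $FS_H(\chi_\rho) = FS_H(\chi_{\rho\vert_H})$ since the defining sum only involves elements of $H$. Because $FS_G(\chi_\rho) \in \{1,0,-1\}$ is exactly the real/complex/quaternionic trichotomy recalled before the statement, the whole computation reduces to evaluating $FS_H(\chi_{\rho\vert_H})$. For this I would use two inputs: additivity of the Frobenius-Schur indicator along direct sums, and the splitting behaviour of $\rho\vert_H$ recalled from \cite[\S 5.1]{bookFultonHarris}, namely that $\rho\vert_H$ is irreducible when $\rho$ is not symmetric and is a sum $\alpha\oplus\alpha^g$ of two distinct $G$-conjugate irreducibles of $H$ when $\rho$ is symmetric. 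A short reindexing argument ($h\mapsto ghg^{-1}$) shows $FS_H(\chi_{\alpha^g}) = FS_H(\chi_\alpha)$, so the two summands always have the same Frobenius-Schur type; hence $FS_H(\chi_{\rho\vert_H}) \in \{2,0,-2\}$ in the symmetric case and $FS_H(\chi_{\rho\vert_H}) \in \{1,0,-1\}$ in the non-symmetric case.

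For the non-symmetric cases (1c), (2b), (3c), where $\rho\vert_H = \alpha$ is irreducible, the point is to read off the type of $\alpha$ from that of $\rho$. If $\rho$ is real it carries a $G$-invariant real structure $J$ (antilinear, $J^2 = +1$); restricting $J$ to $H$ exhibits a real structure on $\alpha$, so $\alpha$ is real and $FS_H(\chi_\alpha) = 1$, giving $SFS = 1-1 = 0$. If $\rho$ is quaternionic the same argument with $J^2 = -1$ forces $\alpha$ quaternionic, so $FS_H(\chi_\alpha) = -1$ and $SFS = -1-(-1) = 0$. If $\rho$ is complex then $FS_G(\chi_\rho) = 0$ and $SFS = FS_H(\chi_\alpha)$ is simply the Frobenius-Schur indicator of the irreducible $H$-representation $\alpha$; to justify that all three values genuinely occur I would exhibit explicit small examples.

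For the symmetric cases I would combine $FS_H(\chi_{\rho\vert_H}) = 2\,FS_H(\chi_\alpha)$ with the sign constraints coming from how an invariant structure interacts with the isotypic decomposition of $\rho\vert_H = \alpha\oplus\alpha^g$ (with $\alpha\ne\alpha^g$). A $G$-invariant real (resp. quaternionic) structure on $\rho$ restricts to an antilinear $H$-equivariant $J$ on $\alpha\oplus\alpha^g$ with $J^2 = +1$ (resp. $J^2 = -1$). When the two pieces are self-conjugate ($\bar\alpha\cong\alpha$), the antilinear $J$ preserves each isotypic component and descends to a structure of the same square on $\alpha$; this forbids quaternionic pieces when $\rho$ is real and forbids real pieces when $\rho$ is quaternionic. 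Thus a real symmetric $\rho$ has pieces that are real ($SFS = 2-1 = 1$, case 1a) or complex ($SFS = 0-1 = -1$, case 1b), and a quaternionic symmetric $\rho$ has pieces that are quaternionic ($SFS = -2+1 = -1$, case 3a) or complex ($SFS = 0+1 = 1$, case 3b). For a complex symmetric $\rho$ I would instead use that $\chi_\rho$ vanishes on odd elements, so $\chi_\rho$ being non-real forces $\chi_{\rho\vert_H}$ non-real; real or quaternionic pieces would make $\chi_{\rho\vert_H}$ real-valued, hence the pieces must be complex and $SFS = 0$ (case 2a).

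The main obstacle is precisely this last structural input in the symmetric cases: ruling out the \emph{a priori} possible but forbidden combinations (a real $\rho$ with quaternionic pieces, a quaternionic $\rho$ with real pieces), which would otherwise produce the impossible values $SFS = \mp 3$. Everything else is bookkeeping once the trichotomy for $FS_G$, the additivity of $FS_H$, and the index-two splitting of $\rho\vert_H$ are in place. I would organize the write-up as a single case analysis over the three types of $\rho$, treating the symmetric/non-symmetric dichotomy inside each, so that the six listed values fall out of the two identities $FS_H(\chi_{\rho\vert_H}) = 2\,FS_H(\chi_\alpha)$ and $SFS = FS_H - FS_G$.
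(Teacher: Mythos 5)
Your proof is correct, and it diverges from the paper's at exactly the step you single out as the main obstacle. Both arguments share the same skeleton: the identity $SFS(\chi_\rho) = FS_H(\chi_\rho) - FS_G(\chi_\rho)$, the observation that $FS_H$ only sees $\chi_{\rho\vert_H}$, and the Clifford-theoretic dichotomy ($\rho\vert_H$ irreducible when $\rho$ is not symmetric, $\alpha \oplus \alpha^g$ with $\alpha \not\cong \alpha^g$ when it is; you even re-derive the same-type property of the two pieces, which the paper quotes from Fulton--Harris). The difference is how the forbidden combinations are excluded. The paper first proves the a priori bound $SFS(\chi_\rho) \in \{1,0,-1\}$: writing $\rho \otimes \rho = S \oplus \Lambda$ as the sum of the symmetric and antisymmetric squares and letting $a,b$ be the multiplicities of $\varepsilon$ in $S,\Lambda$, one gets $SFS(\chi_\rho) = a-b$ while $a+b = \delta_{\rho^*,\rho^T} \leq 1$; a real symmetric $\rho$ with quaternionic pieces would then force $SFS(\chi_\rho) = -3$, contradicting the bound. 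You instead rule this out structurally: since $\alpha \not\cong \alpha^g$ and each occurs with multiplicity one, a self-conjugate piece has its isotypic component preserved by the antilinear equivariant $J$, which therefore restricts to $\alpha$ with the same square; the complex symmetric case is settled by the vanishing of $\chi_\rho$ on odd elements. Both mechanisms are sound. The paper's bound is quicker and carries extra information ($SFS$ as a difference of multiplicities in the symmetric versus the exterior square), whereas your argument explains the obstruction rather than merely contradicting a bound, and it supplies precisely what the paper's phrase ``the cases (2),(3) are handled similarly'' leaves implicit: for case (3c) the bound alone only yields $SFS \in \{1,0\}$, and one needs your restriction-of-structure argument (or the real-valuedness of $\chi_\rho$ on $H$) to conclude $SFS = 0$. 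The one point left open on both sides is the claim in (2b) that all three values actually occur; you at least flag that explicit examples are required, and they do exist (for instance $\mathbb{Z}/4\mathbb{Z}$, the semidihedral group of order $16$, and $\mathbb{Z}/8\mathbb{Z}$, with $\varepsilon$ the unique surjection onto $\{\pm 1\}$ in each case, realize the values $1$, $-1$ and $0$ respectively).
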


\begin{proof}
First, we prove that $SFS(\chi_\rho)$ belongs to $\{ 1,0,-1 \}$. Denote by $S$ and $\Lambda$ the symmetric and anti-symmetric squares of $\rho$, such that $S \oplus \Lambda$ is isomorphic to $\rho \otimes \rho$. Denote by $a,b,c$ the number of copies of the representation $\varepsilon$ in the decomposition of $S,\Lambda,\rho\otimes \rho$ as a sum of irreducible representations. By \cite[Proposition 2.1]{bookFultonHarris}, we have  \begin{equation}
\chi_\rho(\tau^2) = \chi_S(\tau) - \chi_\Lambda(\tau) \text{ and }\chi_\rho(\tau)^2 =  \chi_S(\tau) + \chi_\Lambda(\tau)
\end{equation} for all $\tau$. Multiplying those identities with $\varepsilon(\tau)$ and summing over $\tau$, we obtain \begin{equation}
SFS(\chi_\rho) = a - b \text{ and } c = a + b.
\end{equation} Since $c$ can also be expressed as \begin{equation}
c = \frac{1}{\# G} \sum_{\tau \in G} \chi_{\rho}(\tau) \chi_{\rho^T}(\tau) = \delta_{\rho^*,\rho^T} \in \{ 0,1 \}, 
\end{equation} we have $a,b \in \{ 1,0 \}$ and $SFS(\chi_\rho) \in \{ 1,0,-1 \}$.

The cases (1),(2),(3) are handled similarly. We only provide the proof of the case (1). Formula (\ref{equationSFSvsFS}) then reads $SFS(\chi_\rho) = FS_H(\chi_\rho) - 1$. 

Suppose first that $\rho$ is symmetric. Then, $FS_H(\chi_\rho) \in \{ 2,0,-2 \}$. It can not take the value $-2$, otherwise we would obtain $SFS(\chi_\rho) = -3$. The two other values are the cases (1a),(1b). 

Suppose that $\rho$ is not symmetric. Its restriction to $H$ is also defined over $\mathbb{R}$, and we know that it is irreducible. Thus, $FS_H(\chi_\rho) = 1$ and $SFS(\chi_\rho) = 0$. This is the case (1c).
\end{proof}

Since the characters of irreducible representations form a basis of the space of functions $\{ C(G) \rightarrow \mathbb{C} \}$, Lemma \ref{lemmaSFS} describes entirely the signed Frobenius-Schur indicator.\\

A key role for signed Real Hurwitz numbers is played by the element \begin{equation}
\mathfrak{L} = \sum_{\tau \in G} \varepsilon(\tau) \tau^2 \in Z \mathbb{C} G.
\label{equationVectorL}
\end{equation} It is of similar importance as $\mathfrak{K}$ for Hurwitz numbers. When all the irreducible characters of $G$ are real-valued, the element $\mathfrak{L}$ posseses crucial properties described in Corollary \ref{corollaryVectorL}.
	
\begin{corollary}
Let $G$ be a finite group whose characters are real-valued and $\varepsilon : G \rightarrow \{ \pm 1 \}$ a non-trivial morphism. Then, \begin{equation}
\mathfrak{L} c = 0 \text{ in } Z \mathbb{C} G
\end{equation} for every odd conjugacy class $c$.
\label{corollaryVectorL}
\end{corollary}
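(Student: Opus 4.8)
The plan is to diagonalize $\mathfrak{L}$ in the idempotent basis $(v_\rho)$ and read off the vanishing from the properties of $SFS$ together with the symmetry of characters. First I would compute the scalar by which the central element $\mathfrak{L}$ acts on an irreducible representation $\rho$. Since $\rho$ is a homomorphism, $\mathrm{Tr}(\rho(\tau)^2) = \mathrm{Tr}(\rho(\tau^2)) = \chi_\rho(\tau^2)$, so $\rho(\mathfrak{L})$ is a scalar whose value, obtained by taking the trace and dividing by $\dim\rho$, is
\[
\frac{1}{\dim\rho}\sum_{\tau\in G}\varepsilon(\tau)\chi_\rho(\tau^2) = \frac{\# G}{\dim\rho}\,SFS(\chi_\rho).
\]
Because $v_\rho$ acts as $\delta_{\rho,\rho'}$ on each irreducible $\rho'$ (a one-line check from (\ref{equationBasis1}) using the orthogonality of characters), this identifies $\mathfrak{L} = \sum_{\rho\in Irr(G)} \frac{\# G}{\dim\rho}\,SFS(\rho)\,v_\rho$.

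Next I would expand $c$ in the same basis via (\ref{equationBasis2}), namely $c = \sum_\rho f_c(\rho)\,v_\rho$, and multiply. Since the $v_\rho$ are orthogonal idempotents, $\mathfrak{L} c = \sum_\rho \frac{\# G}{\dim\rho}\,SFS(\rho)\,f_c(\rho)\,v_\rho$, so it suffices to show that each product $SFS(\rho)\,f_c(\rho)$ vanishes when $c$ is odd. The key structural input is Lemma \ref{lemmaSFS}: under the standing hypothesis that all irreducible characters of $G$ are real-valued, case (2) never occurs, so the only representations with $SFS(\rho)\neq 0$ are the symmetric ones (cases (1a), (1b), (3a), (3b)); for every non-symmetric $\rho$ one has $SFS(\rho)=0$ directly.

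Finally I would use that a symmetric $\rho$ satisfies $\chi_\rho(\tau)=\chi_{\rho^T}(\tau)=\varepsilon(\tau)\chi_\rho(\tau)$ for all $\tau$, hence $\chi_\rho$ vanishes on every odd element; thus $f_c(\rho)=\frac{\#c}{\dim\rho}\chi_\rho(c)=0$ for odd $c$ by (\ref{equationFcRho}). Combining the two alternatives, for each $\rho$ either $SFS(\rho)=0$ (non-symmetric case) or $f_c(\rho)=0$ (symmetric case), so every coefficient $SFS(\rho)\,f_c(\rho)$ is zero and therefore $\mathfrak{L} c = 0$.

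I expect the only delicate point to be the bookkeeping in the first paragraph — confirming the exact normalization so that the acting scalar is precisely $\frac{\#G}{\dim\rho}SFS(\rho)$ and that $v_\rho$ is genuinely the projector onto the $\rho$-isotypic block — rather than any conceptual obstacle. The substantive work was already carried out in Lemma \ref{lemmaSFS}; here the hypothesis of real-valued characters is exactly what removes the problematic complex case, collapsing the condition ``$SFS(\rho)\neq 0$'' to ``$\rho$ symmetric'', after which the vanishing of symmetric characters on odd classes closes the argument.
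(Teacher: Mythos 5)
Your proposal is correct and follows essentially the same route as the paper: both express $\mathfrak{L}$ in the idempotent basis with coefficients $\frac{\# G}{\dim \rho} SFS(\rho)$, expand $c$ via (\ref{equationBasis2}), and then use Lemma \ref{lemmaSFS} together with the real-valued character hypothesis to conclude that for each $\rho$ either $SFS(\rho)=0$ or $\rho$ is symmetric, in which case $\chi_\rho$ (hence $f_c(\rho)$) vanishes on odd classes. The only cosmetic difference is that you derive the idempotent-basis expansion of $\mathfrak{L}$ by computing its scalar action on each irreducible, whereas the paper states it directly; this bookkeeping is correct.
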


\begin{proof}
In the idempotent basis, \begin{equation}
\mathfrak{L} = \sum_{\rho \in Irr(G)} \frac{\# G}{dim(\rho)} SFS_{G,\varepsilon}(\rho) \cdot v_\rho
\end{equation} so that \begin{equation}
\mathfrak{L} c = \sum_{\rho \in Irr(G)} \frac{\# G \# c}{dim(\rho)^2} SFS_{G,\varepsilon}(\rho) \chi_\rho(c) \cdot v_\rho.
\end{equation} The product $SFS_{G,\varepsilon}(\rho) \chi_\rho(c)$ vanishes whenever $c$ is odd. Indeed, the representation $\rho$ is real or quaternionic since its character is real-valued. Thus, according to Lemma \ref{lemmaSFS}, either $SFS_{G,\varepsilon}(\rho) = 0$ or $\rho$ is symmetric. In the later case, $\chi_{\rho}(c) = \varepsilon(c) \chi_{\rho}(c) = - \chi_\rho(c)$. Thus, $\mathfrak{L} c = 0$.
\end{proof} 



	\subsection{The case of the symmetric group}
	\label{subsectionSFSsymmetric}

In this subsection, we consider the symmetric group $G = \mathfrak{S}_d$ with the sign morphism $\varepsilon : \mathfrak{S}_d \rightarrow \{ \pm 1 \}$. Its kernel is the alternating group $\mathfrak{A}_d$. By \cite[\S 5.1]{bookFultonHarris}, the involutions $\rho_\mu \mapsto \left( \rho_\mu \right)^T$ and $\mu \mapsto \mu^T$ are related by \begin{equation}
\rho_{\mu^T} = \left( \rho_\mu \right)^T.
\end{equation} In particular, the representation $\rho_\mu$ is symmetric if and only if the partition $\mu$ is symmetric. 

\begin{proposition}
The signed Frobenius-Schur indicator $SFS_{\mathfrak{S}_d,\varepsilon}$ satisfies \begin{equation}
	SFS_{\mathfrak{S}_d,\varepsilon}(\mu) = \left\{ \begin{array}{ll}
		(-1)^{\frac{d-r(\mu)}{2}} & \text{ if } \mu^T = \mu, \\
		0 & \text{ otherwise}.
	\end{array} \right.
\end{equation}
\label{propositionSFSsymmetric}
\end{proposition}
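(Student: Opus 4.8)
The plan is to package all the numbers $SFS_{\mathfrak{S}_d,\varepsilon}(\mu)$, for a fixed $d$, into a single symmetric function and to recognise the result as a classical Littlewood product. The vanishing for non-symmetric $\mu$ is essentially free: every irreducible representation of $\mathfrak{S}_d$ is defined over $\mathbb{Q}$, hence real, so we are in case (1) of Lemma \ref{lemmaSFS}, and if $\mu^T \neq \mu$ then $\rho_\mu$ is not symmetric and case (1c) gives $SFS_{\mathfrak{S}_d,\varepsilon}(\mu)=0$. (This vanishing will also drop out of the computation below, since non-self-conjugate partitions do not occur.) It remains to pin down the sign when $\mu=\mu^T$, for which Lemma \ref{lemmaSFS} already guarantees $SFS_{\mathfrak{S}_d,\varepsilon}(\mu)=\pm1$.

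First I would rewrite the defining sum by grouping permutations according to their cycle type. Writing $s_\mu$ for the Schur function and $p_\lambda$ for the power-sum symmetric function, and using the Frobenius character formula $\sum_{\mu\vdash d}\chi_\mu(\lambda)\,s_\mu=p_\lambda$, multiplying $SFS_{\mathfrak{S}_d,\varepsilon}(\mu)$ by $s_\mu$ and summing over $\mu\vdash d$ gives
\[
\sum_{\mu\vdash d}SFS_{\mathfrak{S}_d,\varepsilon}(\mu)\,s_\mu=\sum_{\lambda\vdash d}\frac{\varepsilon(\lambda)}{z_\lambda}\,p_{\lambda^2},
\]
where $\lambda^2$ denotes the cycle type of the square of a permutation of type $\lambda$. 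The key elementary observation is that squaring fixes each odd cycle and splits each even $2k$-cycle into two $k$-cycles, so that $p_{\lambda^2}=\prod_{j\text{ odd}}p_j^{m_j}\prod_{k\ge1}p_k^{2m_{2k}}$, while $\varepsilon(\lambda)=\prod_{k\ge1}(-1)^{m_{2k}}$, where $m_j$ is the number of parts of $\lambda$ equal to $j$. Summing over all $d$, the right-hand side factors as an exponential generating function over the multiplicities $m_j$, yielding
\[
\sum_{d\ge0}\ \sum_{\mu\vdash d}SFS_{\mathfrak{S}_d,\varepsilon}(\mu)\,s_\mu=\exp\!\Big(\sum_{j\text{ odd}}\frac{p_j}{j}-\sum_{k\ge1}\frac{p_k^2}{2k}\Big).
\]

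Next I would evaluate this exponential using the standard specialisations $\sum_{j\text{ odd}}p_j/j=\tfrac12\sum_i\log\frac{1+x_i}{1-x_i}$ and $\sum_{k\ge1}p_k^2/(2k)=-\tfrac12\sum_{i,l}\log(1-x_ix_l)$; after separating the diagonal $i=l$ and simplifying the square roots, one obtains the closed product
\[
\sum_{d\ge0}\ \sum_{\mu\vdash d}SFS_{\mathfrak{S}_d,\varepsilon}(\mu)\,s_\mu=\prod_i(1+x_i)\prod_{i<l}(1-x_ix_l).
\]
The argument is then finished by the classical Littlewood identity $\prod_i(1+x_i)\prod_{i<l}(1-x_ix_l)=\sum_{\mu=\mu^T}(-1)^{(|\mu|-r(\mu))/2}s_\mu$, the sum running over self-conjugate partitions; the exponent is read off from the Frobenius coordinates $(a_1,\dots,a_r\,|\,a_1,\dots,a_r)$ of a self-conjugate $\mu$, for which $r=r(\mu)$ and $|\mu|-r(\mu)=2\sum_i a_i$. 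Comparing coefficients of $s_\mu$, which are linearly independent, yields the stated formula, including the vanishing for $\mu^T\neq\mu$.

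I expect the main obstacle to be this last step, namely establishing (or correctly invoking) the Littlewood identity and tracking its sign; the plethystic bookkeeping in the middle is routine once the action of squaring on cycle types is recorded, but the appearance of the diagonal length $r(\mu)$ and of the exponent $(d-r(\mu))/2$ is genuinely a feature of the Schur expansion of $\prod_i(1+x_i)\prod_{i<l}(1-x_ix_l)$, whose proof rests on the Frobenius/Pfaffian description of self-conjugate partitions rather than on anything specific to the present setting; this route also has the merit of being independent of the Weyl-formula computation of \cite{articleGeorgievaIonel}. As a consistency check one can verify small cases by hand: there is no self-conjugate partition of $2$, matching the vanishing of the degree-$2$ part of the product, while for $\mu=(2,1)$ a direct evaluation of the defining sum gives $SFS_{\mathfrak{S}_3,\varepsilon}(\mu)=-1=(-1)^{(3-1)/2}$, and for $\mu=(2,2)$ it gives $SFS_{\mathfrak{S}_4,\varepsilon}(\mu)=-1=(-1)^{(4-2)/2}$.
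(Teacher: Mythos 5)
Your proposal is correct, and the computation checks out at every step (I verified the expansion through degree $4$, where the product $\prod_i(1+x_i)\prod_{i<l}(1-x_ix_l)$ indeed produces $-s_{(2,2)}$, as well as your numerical evaluations of $SFS_{\mathfrak{S}_3,\varepsilon}((2,1))$ and $SFS_{\mathfrak{S}_4,\varepsilon}((2,2))$). However, it takes a genuinely different route from the paper. The paper leans entirely on Lemma \ref{lemmaSFS}: since every irreducible representation of $\mathfrak{S}_d$ is real, the only issue is to decide, for symmetric $\mu$, whether the restriction of $\rho_\mu$ to $\mathfrak{A}_d$ splits into two real or two complex components (cases (1a) versus (1b)); this is settled by quoting from \cite[\S 5.1]{bookFultonHarris} the explicit values of $\chi_{\rho_\pm}$ on the split conjugacy classes $c_\pm$, which involve the square root $\bigl((-1)^{(d-r(\mu))/2}\lambda_1\cdots\lambda_l\bigr)^{1/2}$, so that reality of these characters is governed precisely by the parity of $(d-r(\mu))/2$. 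You instead use Lemma \ref{lemmaSFS} only for the vanishing statement (and, as you note, even that is redundant), and obtain the sign by a global generating-function computation: Frobenius's character formula, the cycle-type analysis of squaring, and Littlewood's identity $\prod_i(1+x_i)\prod_{i<l}(1-x_ix_l)=\sum_{\mu=\mu^T}(-1)^{(|\mu|-r(\mu))/2}s_\mu$. The one point your argument hangs on, that identity, is indeed classical (it appears, for instance, among the examples of Chapter I.5 of Macdonald's \emph{Symmetric Functions and Hall Polynomials}), so invoking it is legitimate, though it is not in the paper's bibliography. What each approach buys: yours computes all the values $SFS_{\mathfrak{S}_d,\varepsilon}(\mu)$ at once, needs no a priori knowledge that the indicator lies in $\{1,0,-1\}$, and is independent both of the $\mathfrak{S}_d/\mathfrak{A}_d$ splitting theory and of the Weyl-formula computation of \cite{articleGeorgievaIonel}, giving in effect a third proof; the paper's argument stays inside the framework of Lemma \ref{lemmaSFS} (developed there for arbitrary pairs $(G,\varepsilon)$), requires no symmetric-function machinery, and makes visible the representation-theoretic meaning of the sign, namely whether the two components of $\rho_\mu\vert_{\mathfrak{A}_d}$ are real or complex.
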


\begin{proof}
All irreducible representations of $\mathfrak{S}_d$ can be defined over $\mathbb{R}$ (even over $\mathbb{Q}$). Therefore, by Lemma \ref{lemmaSFS}, $SFS_{\mathfrak{S}_d,\varepsilon}(\mu) = \pm 1$ if $\mu$ is symmetric and $SFS_{\mathfrak{S}_d,\varepsilon}(\mu) = 0$ otherwise. 

Suppose that $\mu$ is symmetric. The restriction of $\rho_\mu$ to $\mathfrak{A}_d$ splits as the sum of two irreducible representations $\rho_+,\rho_-$. The values of the characters $\chi_{\rho_+},\chi_{\rho_-}$ can be described as follows. Following \cite[\S 5.1]{bookFultonHarris}, we associate to the symmetric partition $\mu$ another partition $\lambda$ of $d$, with distinct odd parts, by requiring that $\lambda_i$ is the number of boxes below and to the right of the $i$-th diagonal box in the Young diagram of $\mu$. Explicitly, $\lambda_i = 2 (\mu_i -i ) + 1 = 2\mu_i - 2i + 1$. The conjugacy class $c_\lambda$ splits in $\mathfrak{A}_d$ as the union of two different conjugacy classes $c_+,c_-$.  

By \cite[\S 5.1]{bookFultonHarris}, the values of the characters $\chi_{\rho_+},\chi_{\rho_-}$ are the following. \begin{itemize}
	\item On a conjugacy class $c \neq c_+,c_-$, $\chi_{\rho_+}(c) = \chi_\mu(c) / 2 = \chi_{\rho_-}(c)$. In particular, both are real numbers.
	\item On the conjugacy classes $c_+,c_-$, we have \begin{equation}
	\chi_{\rho_+} (c_+) = (-1)^{\frac{d-r(\mu)}{2}} + \left( (-1)^{\frac{d-r(\mu)}{2}} \lambda_1,\ldots \lambda_l \right)^{1/2} =  \chi_{\rho_-} (c_-)
	\end{equation} and \begin{equation}
	\chi_{\rho_+} (c_-) = (-1)^{\frac{d-r(\mu)}{2}} - \left( (-1)^{\frac{d-r(\mu)}{2}} \lambda_1,\ldots \lambda_l \right)^{1/2} =  \chi_{\rho_-} (c_+).
	\end{equation}
\end{itemize} The number $d-r(\mu)$ is even for $\mu$ symmetric so that $(-1)^{\frac{d-r(\mu)}{2}} = \pm 1$. Thus, the characters $\chi_{\rho_\pm}$ are real-valued if and only if $\frac{d-r(\mu)}{2}$ is even. Lemma \ref{lemmaSFS} implies the statement.
\end{proof}

An independant proof of Proposition \ref{propositionSFSsymmetric} is provided in \cite[Lemma 9.6]{articleGeorgievaIonel}. It uses the Weyl formula for $B$-type Lie algebras. 

\section{Construction of the signed Real Hurwitz numbers}
\label{sectionConstruction}

	\subsection{Real Riemann surfaces}

\begin{definition}
A \textit{Real Riemann surface} $(X,\sigma)$ is a closed Riemann surface $X$ together with an anti-holomorphic involution $\sigma$. The pair is \textit{connected} if $X$ is connected. It is a \textit{doublet} if it is isomorphic to a pair $(Y \sqcup \overline{Y}, id)$, where $Y$ is a connected Riemann surface, $\overline{Y}$ is the same Riemann surface with the opposite complex structure, and the involution switches the two connected components. 
\end{definition}

Any Real Riemann surface is the disjoint union of connected Real Riemann surfaces and doublets. The \textit{real locus} $X^\sigma = \{x \in X \ | \  \sigma(x) = x \}$ of a Real Riemann surface is a closed $1$-dimensional smooth submanifold, \textit{ie.} a disjoint union of embedded circles. Except in Section \ref{sectionExtensionNonEmpty}, we assume \begin{equation}
X^\sigma = \emptyset.\end{equation}

Let $(X,\sigma)$ be a Riemann surface and $B \subseteq X$ a finite $\sigma$-invariant subset. It can be written as $B = \{ b_1^\pm,\ldots,b_n^\pm \}$ where $\sigma(b_i^\pm) = b_i^\mp$. The triple $(X,B,\sigma)$, together with a specified ordere $b_i^+,b_i^-$ for each $i$, will be called a \textit{marked} Real Riemann surface. Our counts will be independant of the order if the target is connected and it will differ from the Complex Hurwitz numbers by a global sign if it is a doublet, see Definition \ref{definitionRealHurwitz} and Definition \ref{definitionDoubletHurwitz}. We denote respectively by $X_\sigma$, $X^o_\sigma$ and $B_\sigma$ the quotient space of $X$, $X \setminus B$ and $B$ by $\sigma$. They are smooth manifolds. \\

In this paper, homology and cohomology are taken with coefficients in $\mathbb{Z} / 2 \mathbb{Z}$. Let $(X,B,\sigma)$ be a connected marked Riemann surface. The spaces $X_\sigma$ and $X_\sigma^o$ are both unorientable. We can describe the canonical map \begin{equation}
H_1(X_\sigma^o) \rightarrow H_1^{BM}(X_\sigma^o)
\end{equation} from homology to Borel-Moore homology as follows. Its kernel is generated by small loops around the missing points $B_\sigma$. Its cokernel consists of open paths joining two points of $B_\sigma$, \textit{modulo} the classes of closed loops. Moreover, $H_1(X_\sigma^o)$ can be described explictly as the abelianization of the fundamental group of $X_\sigma^o$. The latter admits the presentation (see Figure \ref{figurePresentation}) \begin{equation}
\pi_1 (X_\sigma^o,p) \simeq \left< \left. \begin{array}{c}
\alpha_1,\beta_1,\ldots,\alpha_h,\beta_h \\
\gamma_0,\ldots,\gamma_k \\
\delta_1 ,\ldots,\delta_n 
\end{array} \right| \begin{array}{cc}
[\alpha_1,\beta_1] \ldots [\alpha_h,\beta_h] \\
= \gamma_0^2 \ldots \gamma_k^2 \delta_1 \ldots \delta_n 
\end{array} \right>
\label{equationPresentation}
\end{equation} arising from a standard polygonal description of $X_\sigma^o$. The non-negative integers $h,k$ are related to the genus $g$ of $X$ by $g = 2h + k$. Thus, $H_1(X_\sigma^o)$ can be described as the quotient of the free $\mathbb{Z} / 2 \mathbb{Z}$-vector space generated by the classes $\alpha,\beta,\gamma,\delta$ by the relation $\delta_1 + \ldots + \delta_n = 0$, and $H_1^{BM}(X_\sigma^o)$ admits a subspace freely generated by the classes $\alpha,\beta,\gamma$.

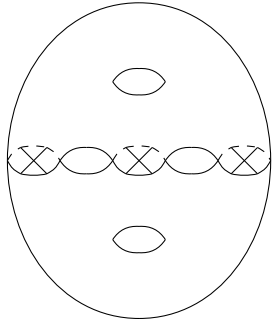
\begin{figure}[h!]
\centering
\begin{tikzpicture}[scale = 0.7]
		\node (0) at (-2.5, 0) {};
		\node (1) at (2.5, 0) {};
		\node (2) at (0, 3) {};
		\node (3) at (0, -3) {};
		\node (14) at (-1, 0.25) {};
		\node (15) at (-1, -0.25) {};
		\node (16) at (-0.5, 0) {};
		\node (17) at (-1.5, 0) {};
		\node (18) at (1, 0.25) {};
		\node (19) at (1, -0.25) {};
		\node (20) at (1.5, 0) {};
		\node (21) at (0.5, 0) {};
		\node (22) at (0, 1.75) {};
		\node (23) at (0, 1.25) {};
		\node (24) at (0.5, 1.5) {};
		\node (25) at (-0.5, 1.5) {};
		\node (26) at (0, -1.25) {};
		\node (27) at (0, -1.75) {};
		\node (28) at (0.5, -1.5) {};
		\node (29) at (-0.5, -1.5) {};
		\node (30) at (-2.5, 0) {};
		\node (31) at (-1.5, 0) {};
		\node (32) at (-2.25, 0.25) {};
		\node (33) at (-1.75, 0.25) {};
		\node (34) at (-2.25, -0.25) {};
		\node (35) at (-1.75, -0.25) {};
		\node (36) at (-0.5, 0) {};
		\node (37) at (0.5, 0) {};
		\node (38) at (-0.25, 0.25) {};
		\node (39) at (0.25, 0.25) {};
		\node (40) at (-0.25, -0.25) {};
		\node (41) at (0.25, -0.25) {};
		\node (42) at (1.5, 0) {};
		\node (43) at (2.5, 0) {};
		\node (44) at (1.75, 0.25) {};
		\node (45) at (2.25, 0.25) {};
		\node (46) at (1.75, -0.25) {};
		\node (47) at (2.25, -0.25) {};
		\draw [in=180, out=90] (0.center) to (2.center);
		\draw [in=90, out=0] (2.center) to (1.center);
		\draw [in=0, out=-90] (1.center) to (3.center);
		\draw [in=-90, out=-180] (3.center) to (0.center);
		\draw [bend left] (17.center) to (14.center);
		\draw [bend left] (14.center) to (16.center);
		\draw [bend left] (16.center) to (15.center);
		\draw [bend left] (15.center) to (17.center);
		\draw [bend left] (21.center) to (18.center);
		\draw [bend left] (18.center) to (20.center);
		\draw [bend left] (20.center) to (19.center);
		\draw [bend left] (19.center) to (21.center);
		\draw [bend left] (25.center) to (22.center);
		\draw [bend left] (22.center) to (24.center);
		\draw [bend left] (24.center) to (23.center);
		\draw [bend left] (23.center) to (25.center);
		\draw [bend left] (29.center) to (26.center);
		\draw [bend left] (26.center) to (28.center);
		\draw [bend left] (28.center) to (27.center);
		\draw [bend left] (27.center) to (29.center);
		\draw [bend left=15, looseness=1.25, dashed] (30.center) to (32.center);
		\draw [bend left=15, looseness=0.75, dashed] (32.center) to (33.center);
		\draw [bend left=15, looseness=1.25, dashed] (33.center) to (31.center);
		\draw [bend left=15, looseness=1.25] (31.center) to (35.center);
		\draw [bend left=15, looseness=0.75] (35.center) to (34.center);
		\draw [bend left=15, looseness=1.25] (34.center) to (30.center);
		\draw (32.center) to (35.center);
		\draw (34.center) to (33.center);
		\draw [bend left=15, looseness=1.25, dashed] (36.center) to (38.center);
		\draw [bend left=15, looseness=0.75, dashed] (38.center) to (39.center);
		\draw [bend left=15, looseness=1.25, dashed] (39.center) to (37.center);
		\draw [bend left=15, looseness=1.25] (37.center) to (41.center);
		\draw [bend left=15, looseness=0.75] (41.center) to (40.center);
		\draw [bend left=15, looseness=1.25] (40.center) to (36.center);
		\draw (38.center) to (41.center);
		\draw (40.center) to (39.center);
		\draw [bend left=15, looseness=1.25, dashed] (42.center) to (44.center);
		\draw [bend left=15, looseness=0.75, dashed] (44.center) to (45.center);
		\draw [bend left=15, looseness=1.25, dashed] (45.center) to (43.center);
		\draw [bend left=15, looseness=1.25] (43.center) to (47.center);
		\draw [bend left=15, looseness=0.75] (47.center) to (46.center);
		\draw [bend left=15, looseness=1.25] (46.center) to (42.center);
		\draw (44.center) to (47.center);
		\draw (46.center) to (45.center);
\end{tikzpicture}
\caption{Connected Real Riemann surface of genus $g=4$. The involution exchanges the upper and lower part and acts as the antipodal involution on the three crossed circles. The classes $\gamma_0,\gamma_1,\gamma_2$ are represented by the image in the quotient of a path along half of the crossed circles. The presentation of the fundamental group corresponding to this picture is (\ref{equationPresentation}) with $h = 1, k=2, n= 0$.}
\label{figurePresentation}
\end{figure}

\begin{lemma}
The Poincaré dual $\gamma(X_\sigma^o) \in H_1^{BM}(X_\sigma^o)$ of the first Stiefel-Whitney class $w_1(X_\sigma^o) \in H^1 (X_\sigma^o)$ is given in the presentation (\ref{equationPresentation}) by \begin{equation}
\gamma(X_\sigma^o) = \gamma_0 + \ldots + \gamma_k.
\end{equation}
\label{lemmaPoincareDual}
\end{lemma}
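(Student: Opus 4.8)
The plan is to compute the first Stiefel-Whitney class $w_1(X_\sigma^o)$ directly from the cell structure underlying the presentation (\ref{equationPresentation}), and then read off its Poincaré dual. The space $X_\sigma^o$ retracts onto a wedge of circles indexed by the generators $\alpha_i,\beta_i,\gamma_j,\delta_l$, and $w_1$ of a surface is detected by whether a loop is orientation-preserving or orientation-reversing when we transport the tangent orientation around it. Concretely, $w_1 \in H^1(X_\sigma^o;\mathbb{Z}/2\mathbb{Z}) = \operatorname{Hom}(H_1(X_\sigma^o),\mathbb{Z}/2\mathbb{Z})$ is the homomorphism sending a loop class to $0$ if it preserves local orientation and $1$ if it reverses it. The key geometric input is that the loops $\alpha_i,\beta_i$ come from handles in the orientable part and lift to loops in $X$, hence are orientation-\emph{preserving} in the quotient, as are the small loops $\delta_l$ around the punctures; whereas each $\gamma_j$ is the image of a path running along half of a crossed circle (a circle on which $\sigma$ acts as the antipodal/fixed-point-free involution), so traversing $\gamma_j$ crosses a Möbius-type band and \emph{reverses} orientation. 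Thus $\langle w_1,\alpha_i\rangle = \langle w_1,\beta_i\rangle = \langle w_1,\delta_l\rangle = 0$ and $\langle w_1,\gamma_j\rangle = 1$ for all $j = 0,\ldots,k$.

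The first step, therefore, is to justify this orientation-reversal computation carefully using the structure of $X_\sigma$ as the quotient of an orientable surface by a fixed-point-free anti-holomorphic involution: the unorientability is concentrated exactly in the crossed circles, so a loop reverses orientation if and only if it crosses an odd number of them, which for the standard generators means $\langle w_1,\gamma_j\rangle = 1$ and $0$ on the rest. The second step is a duality bookkeeping computation. Poincaré-Lefschetz duality for the open surface $X_\sigma^o$ gives an isomorphism $H^1(X_\sigma^o) \xrightarrow{\sim} H_1^{BM}(X_\sigma^o)$, under which the evaluation pairing of a cohomology class against homology becomes the $\mathbb{Z}/2\mathbb{Z}$ intersection pairing between $H_1^{BM}$ and $H_1$. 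Since the excerpt records that $H_1^{BM}(X_\sigma^o)$ contains a subspace freely generated by the classes $\alpha,\beta,\gamma$, it suffices to identify the class $\gamma(X_\sigma^o) \in H_1^{BM}$ whose intersection number with each homology generator reproduces the values of $w_1$ computed above.

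Here I would invoke the standard intersection pattern of the symplectic-type basis in this polygonal model: $\gamma_j$ intersects $\gamma_j$ (its own dual, via the self-intersection coming from the crossed circle) and the handle pairs $(\alpha_i,\beta_i)$ intersect each other dually, while distinct families are disjoint. Matching $\langle w_1,-\rangle$ against intersection with a candidate dual class then forces $\gamma(X_\sigma^o) = \gamma_0 + \cdots + \gamma_k$: this combination pairs to $1$ with each $\gamma_j$ and to $0$ with each $\alpha_i,\beta_i,\delta_l$, exactly reproducing $w_1$. The main obstacle I anticipate is making the intersection-theoretic identification of the dual class rigorous in Borel-Moore homology of the \emph{open} surface rather than hand-waving from the closed case — in particular keeping track of which generators are genuinely dual under the $H_1^{BM} \times H_1 \to \mathbb{Z}/2\mathbb{Z}$ pairing, since the $\delta_l$ lie in $H_1$ (small puncture loops, killed in $H_1^{BM}$) while the $\gamma_j$ survive to $H_1^{BM}$, so the asymmetry between the two pairing slots must be handled with care. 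Once the pairing table is pinned down, the conclusion is immediate.
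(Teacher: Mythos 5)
Your proposal is correct and takes essentially the same route as the paper's proof: you evaluate $w_1(X_\sigma^o)$ on the generators by the orientation-preserving/reversing criterion (obtaining $1$ exactly on the classes $\gamma_0,\ldots,\gamma_k$), and then transfer this to $H_1^{BM}(X_\sigma^o)$ via Poincaré duality in the polygonal model. Your explicit intersection-pairing bookkeeping is precisely the ``standard argument on the polygonal description'' that the paper invokes to identify the Poincaré dual of $\gamma^i$ with $\gamma_i$, so the two arguments coincide, with yours merely spelling out that step.
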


\begin{proof}
The cohomology class $w_1(X_\sigma^o)$ takes value $0$ on any orientation-preserving loop and $1$ on any orientation-reversing loop. Thus, under the identification $H^1 (X^o_\sigma) = H_1(X_\sigma^o)^*$, \begin{equation}
\omega_1(X_\sigma^o) = \gamma^0 + \ldots + \gamma^k
\end{equation} where we raise the labels to denote the dual basis. By a standard argument on the polygonal description, the Poincaré dual of $\gamma^i$ is the class $\gamma_i$ in $H_1^{BM}(X_\sigma^o)$. The lemma is proved. 
\end{proof}
	
\begin{definition}[Admissible class]
A homology class $a \in H_1(X_\sigma^o)$ is \textit{admissible} if its image in $H_1^{BM}(X_\sigma^o)$ is $\gamma(X_\sigma^o)$.
\label{definitionAdmissible}
\end{definition}

Having chosen a presentation (\ref{equationPresentation}) of $\pi_1(X_\sigma^o,p)$, any admissible class reads \begin{equation}
a = \gamma_0 + \ldots + \gamma_k + x_1 \delta_1 + \ldots + x_n \delta_n
\end{equation} for some $x_1,\ldots,x_n \in \mathbb{Z} / 2 \mathbb{Z}$.\\

\begin{remark}
In the presentation (\ref{equationPresentation}), the class $\gamma_i$ corresponds to a cross-cap, see Figure \ref{figurePresentation}. We can obtain another presentation by replacing the pair $\gamma_{k-1},\gamma_{k}$ of generators of the presentation by $\zeta = \gamma_{k-1} \gamma_{k}$ and $\xi = \gamma_{k}$. The presentation becomes \begin{equation}
\pi_1 (X_\sigma^o,p) \simeq \left< \left. \begin{array}{c}
\alpha_1,\beta_1,\ldots,\alpha_h,\beta_h \\
\gamma_0,\ldots,\gamma_{k-2} , \zeta,\xi \\
\delta_1 ,\ldots,\delta_n 
\end{array} \right| \begin{array}{cc}
[\alpha_1,\beta_1] \ldots [\alpha_h,\beta_h] = \\
\gamma_0^2 \ldots \gamma_{k-2}^2 \zeta \xi^{-1} \zeta \xi \delta_1 \ldots \delta_n 
\end{array} \right>.
\label{equationPresentationBis}
\end{equation} With these new generators, an admissible class reads \begin{equation}
a = \zeta + \gamma_0 + \ldots + \gamma_{k-2} + x_1 \delta_1 + \ldots x_n \delta_n
\end{equation} with $x_1,\ldots,x_n \in \mathbb{Z} / 2 \mathbb{Z}$. The relation between the classes $\zeta,\xi$ and $\gamma_{k-1},\gamma_k$ can be understood by considering different polygonal descriptions of the quotient $X_\sigma^o$. It is described in the case $k=1$ of the Klein bottle in Figure \ref{figurePolygonalDescription}. When drawing the orientable surface $X^o$ with its involution, we think of two of the middle circles as being exchanged, see Figure \ref{figurePresentationBis} in the case $h=0,k=1$. This remark will be used in Lemma \ref{lemmaFuntorialityAdm}.

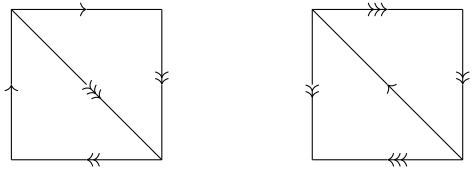
\begin{figure}[h!]
\centering
\begin{tikzpicture}
		\node (0) at (-3, 2) {};
		\node (1) at (-3, 0) {};
		\node (2) at (-3, 1) {};
		\node (3) at (-2, 2) {};
		\node (4) at (-1, 2) {};
		\node (5) at (-1, 1) {};
		\node (6) at (-1, 0) {};
		\node (7) at (-2, 0) {};
		\node (8) at (-2, 1) {};
		\node (9) at (1, 2) {};
		\node (10) at (1, 0) {};
		\node (11) at (1, 1) {};
		\node (12) at (2, 2) {};
		\node (13) at (3, 2) {};
		\node (14) at (3, 1) {};
		\node (15) at (3, 0) {};
		\node (16) at (2, 0) {};
		\node (17) at (2, 1) {};
		\draw[->] (1.center) to (2.center);
		\draw (2.center) to (0.center);
		\draw[->] (0.center) to (3.center);
		\draw (3.center) to (4.center);
		\draw[->>] (4.center) to (5.center);
		\draw (5.center) to (6.center);
		\draw[->>] (6.center) to (7.center);
		\draw (7.center) to (1.center);
		\draw[-<<<] (6.center) to (8.center);
		\draw (8.center) to (0.center);
		\draw[-<<] (10.center) to (11.center);
		\draw (11.center) to (9.center);
		\draw[->>>] (9.center) to (12.center);
		\draw (12.center) to (13.center);
		\draw[->>] (13.center) to (14.center);
		\draw (14.center) to (15.center);
		\draw[->>>] (15.center) to (16.center);
		\draw (16.center) to (10.center);
		\draw[->] (15.center) to (17.center);
		\draw (17.center) to (9.center);
\end{tikzpicture}
\caption{Two different polygonal descriptions of the Klein bottle. The edges with one, two and three arrows correspond respectively the loops $\gamma_0$, $\gamma_1 = \xi$ and $\zeta$.}
\label{figurePolygonalDescription}
\end{figure}

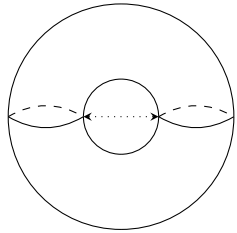
\begin{figure}[h!]
\centering
\begin{tikzpicture}
		\node (6) at (0, 6) {};
		\node (7) at (0, 5) {};
		\node (8) at (0, 3) {};
		\node (9) at (0, 4) {};
		\node (10) at (-0.5, 4.5) {};
		\node (11) at (0.5, 4.5) {};
		\node (12) at (-1.5, 4.5) {};
		\node (13) at (1.5, 4.5) {};
		\draw [bend left=45] (10.center) to (7.center);
		\draw [bend left=45] (7.center) to (11.center);
		\draw [bend left=45] (11.center) to (9.center);
		\draw [bend left=45] (9.center) to (10.center);
		\draw [bend left=45] (12.center) to (6.center);
		\draw [bend left=45] (6.center) to (13.center);
		\draw [bend left=45] (13.center) to (8.center);
		\draw [bend left=45] (8.center) to (12.center);
		\draw [bend left,dashed] (12.center) to (10.center);
		\draw [bend left,dashed] (11.center) to (13.center);
		\draw [bend right] (12.center) to (10.center);
		\draw [bend right] (11.center) to (13.center);
		\draw [stealth-stealth,dotted](10.center) to (11.center);
\end{tikzpicture}
\caption{The involution of the torus exchanges the upper and lower halves, and switches the two middle circles. In the presentation (\ref{equationPresentationBis}) with $h = 0,k=1$, the generator $\zeta$ is represented by one of the two switched circles and $\xi$ by a path from one of the switched circles to the other.}
\label{figurePresentationBis}
\end{figure}
\label{remarkPresentationBis}
\end{remark}

	\subsection{Definition of the signs and closed formulas : connected Real target}
	\label{subsectionDefinitionSign}
		
Let $(X,B,\sigma)$ be a connected marked Real Riemann surface. Given a degree $d$ Real ramified cover $f : (X',\sigma') \rightarrow (X,\sigma)$ unramified over $X \setminus B$, we obtain a degree $d$ unramified cover of $X_\sigma^o$ and therefore a monodromy representation morphism \begin{equation}
\rho_f : \pi_1 (X_\sigma^o,p) \rightarrow \mathfrak{S}_d 
\end{equation} well-defined up to conjugacy in $\mathfrak{S}_d$. Composing with the sign morphism $\varepsilon : \mathfrak{S}_d \rightarrow \{ \pm 1 \}$, it induces a well-defined map \begin{equation}
\varepsilon \rho_f : H_1(X_\sigma^o) \rightarrow \{ \pm 1 \}.
\end{equation}

\begin{definition}[Signed Real Hurwitz numbers]
Let \begin{itemize}
	\item $g,d,n$ be non-negative integers,
	\item $(X,B,\sigma)$ a connected genus $g$ marked Real Riemann surface with $B = \{ b_1^\pm,\ldots,b_n^\pm \}$,
	\item $\bm{\lambda} = (\bm{\lambda}_1,\ldots,\bm{\lambda}_n)$ a sequence of partitions of the integer $d$ and
	\item $a \in H_1(X_\sigma^o)$ an admissible class. 
\end{itemize} The \textit{connected signed Real Hurwitz number} $\mathbb{R} H_{ g,d}(\bm{\lambda})$ is the signed weighted-number of isomorphism classes of degree $d$ Real holomorphic maps $f : (X',\sigma') \rightarrow (X,\sigma)$ such that \begin{enumerate}[label = (\alph*)]
	\item $(X',\sigma')$ is a connected Real Riemann surface or a doublet,
	\item The ramification profile over $b_i^\pm$ is given by the partition $\bm{\lambda}_i$ and
	\item $f$ is unramified over $X \setminus B$.
\end{enumerate} The weight of the isomorphism class $[f]$ is $1 / \# \mathrm{Aut}(f)$ and its sign is $\varepsilon \rho_f(a)$, \textit{ie.} \begin{equation}
\mathbb{R}H_{g, d}(\bm{\lambda}) = \sum_{[f]} \frac{\varepsilon \rho_f(a)}{\# Aut(f)}.
\end{equation} The \textit{disconnected signed Real Hurwitz number} $\mathbb{R} H_{g,d}^\bullet(\bm{\lambda})$ is defined similarly by removing the connectedness assumption in (a).
\label{definitionRealHurwitz}
\end{definition}

The admissible class $a$ does not appear in the notations $\mathbb{R}H_{g, d}(\bm{\lambda})$ and $\mathbb{R}H^\bullet_{g, d }(\bm{\lambda})$. In fact, those numbers do not depend on $a$. It is proved in Lemma \ref{lemmaRealOperatorProduct}. By Definition \ref{definitionAdmissible}, the sign $\varepsilon \rho_f(a)$ does not depend on $a$ if all the partitions are even. The independence result is equivalent to the fact that the numbers $\mathbb{R}H_{g, d}(\bm{\lambda})$ and $\mathbb{R}H^\bullet_{g, d }(\bm{\lambda})$ vanish whenever a partition $\bm{\lambda}_i$ is odd.

\begin{lemma}
The number $\mathbb{R}H^\bullet_{g, d}(\bm{\lambda})$ is the coefficient of the identity in the product \begin{equation}
\frac{1}{d!} \mathfrak{K}^h \mathfrak{L}^{k+1} c_{\bm{\lambda}_1} \ldots c_{\bm{\lambda}_n} \in Z \mathbb{C} \mathfrak{S}_d
\end{equation} where $g = 2h + k$. The numbers $\mathbb{R}H_{g, d}(\bm{\lambda})$ and $\mathbb{R}H^\bullet_{g, d }(\bm{\lambda})$ do not depend on the admissible class chosen.
\label{lemmaRealOperatorProduct}
\end{lemma}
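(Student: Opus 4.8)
The plan is to pass from Real covers to the associated topological covers of the unorientable quotient $X_\sigma^o$, count the latter by their monodromy, and then recognize the resulting sign-weighted count as the coefficient of the identity in the stated product. \emph{First} I would set up a weight-preserving bijection between isomorphism classes of degree $d$ Real covers $f:(X',\sigma')\to(X,\sigma)$ unramified over $X\setminus B$ with ramification profile $\bm{\lambda}_i$ over $b_i^\pm$, and isomorphism classes of degree $d$ topological covers $Y\to X_\sigma^o$ whose monodromy around the puncture $\delta_i$ has cycle type $\bm{\lambda}_i$. Since $\sigma$ is fixed-point free, $X\setminus B\to X_\sigma^o$ is the orientation double cover; given $Y\to X_\sigma^o$ one pulls it back along this double cover to a cover of $X\setminus B$ carrying the induced deck involution $\sigma'$, then fills in the punctures and pulls back the complex structure, making $f$ holomorphic and $\sigma'$ anti-holomorphic, and conversely $f$ descends to $Y\to X_\sigma^o$. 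The compatibility to check is that $\sigma'$-equivariant automorphisms of $X'\setminus f^{-1}(B)$ are in bijection with deck transformations of $Y\to X_\sigma^o$, which identifies $\mathrm{Aut}(f)$ with the centralizer $C_{\mathfrak{S}_d}(\mathrm{im}\,\rho_f)$ of the monodromy. I expect this equivariant lifting/descent and the matching of automorphism groups to be the main technical point.

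Granting the bijection, I would let $\mathfrak{S}_d$ act by conjugation on the set of homomorphisms $\rho:\pi_1(X_\sigma^o,p)\to\mathfrak{S}_d$ with $\rho(\delta_i)\in c_{\bm{\lambda}_i}$; its orbits are the isomorphism classes above and the stabilizer of $\rho$ is $\mathrm{Aut}(f)$. Because $\varepsilon\rho_f$ factors through $H_1(X_\sigma^o)$, the sign $\varepsilon\rho_f(a)$ is constant on each orbit, so the orbit--stabilizer theorem gives
\[
\mathbb{R}H^\bullet_{g,d}(\bm{\lambda})=\sum_{[f]}\frac{\varepsilon\rho_f(a)}{\#\mathrm{Aut}(f)}=\frac{1}{d!}\sum_{\rho}\varepsilon\rho(a).
\]
In the presentation (\ref{equationPresentation}) a homomorphism $\rho$ is a tuple $(A_i,B_i,C_j,D_i)$ with $D_i\in c_{\bm{\lambda}_i}$ satisfying $[A_1,B_1]\cdots[A_h,B_h]=C_0^2\cdots C_k^2 D_1\cdots D_n$, and for $a=\gamma_0+\cdots+\gamma_k+\sum_i x_i\delta_i$ one has $\varepsilon\rho(a)=\big(\prod_j\varepsilon(C_j)\big)\big(\prod_i\varepsilon(\bm{\lambda}_i)^{x_i}\big)$, since $\varepsilon(D_i)=\varepsilon(\bm{\lambda}_i)$. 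Recalling from the classical theory that $\mathfrak{K}=\sum_{s,t}[s,t]$, so that the coefficient of a group element in $\mathfrak{K}^h$ counts $h$-tuples of commutators with that product, together with $\mathfrak{L}=\sum_\tau\varepsilon(\tau)\tau^2$ from (\ref{equationVectorL}), the quantity $\frac{1}{d!}\sum_{\rho}\prod_j\varepsilon(C_j)$ is exactly the coefficient of the identity in $\frac{1}{d!}\mathfrak{K}^h\mathfrak{L}^{k+1}c_{\bm{\lambda}_1}\cdots c_{\bm{\lambda}_n}$. Matching the two ordered relations is routine bookkeeping: each $c_{\bm{\lambda}_i}$ is inverse-closed (a permutation and its inverse share a cycle type), $\varepsilon$ is inverse-invariant, and $Z\mathbb{C}\mathfrak{S}_d$ is commutative, so one may invert the relation and reorder its factors freely. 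With $a_0=\gamma_0+\cdots+\gamma_k$ (all $x_i=0$) this proves the first assertion.

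\emph{Finally}, the displayed factorization shows that for any admissible $a$ the number computed with $a$ equals $\prod_i\varepsilon(\bm{\lambda}_i)^{x_i}$ times its value for $a_0$. If every $\bm{\lambda}_i$ is even this prefactor is $1$, giving independence; if some $\bm{\lambda}_i$ is odd then, as $\mathfrak{S}_d$ has real-valued characters and $k+1\ge 1$, Corollary \ref{corollaryVectorL} gives $\mathfrak{L}\,c_{\bm{\lambda}_i}=0$, so by commutativity the product vanishes and $\mathbb{R}H^\bullet_{g,d}(\bm{\lambda})=0$ for every $a$. In either case $\mathbb{R}H^\bullet_{g,d}(\bm{\lambda})$ is independent of $a$. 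For the connected numbers, the sign $\varepsilon\rho_f(a)$ is multiplicative over the connected components of the cover, so the signed connected and disconnected numbers obey the usual exponential (inclusion--exclusion) relation; since every disconnected number is $a$-independent, so is every connected one, and the analogous factorization for the connected count then forces $\mathbb{R}H_{g,d}(\bm{\lambda})=0$ whenever some $\bm{\lambda}_i$ is odd.
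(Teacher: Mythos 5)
Your proposal is correct and follows essentially the same route as the paper: the monodromy/orbit--stabilizer reduction to counting permutation tuples satisfying the relation of presentation (\ref{equationPresentation}), identification of that signed count with the coefficient of the identity in $\frac{1}{d!}\mathfrak{K}^h\mathfrak{L}^{k+1}c_{\bm{\lambda}_1}\cdots c_{\bm{\lambda}_n}$ up to the prefactor $\prod_i\varepsilon(\bm{\lambda}_i)^{x_i}$, the appeal to Corollary \ref{corollaryVectorL} to kill that prefactor when some $\bm{\lambda}_i$ is odd, and the $\exp/\log$ relation between connected and disconnected generating series for the last assertion. The only difference is that you spell out the standard cover-to-monodromy dictionary and the commutator expansion $\mathfrak{K}=\sum_{s,t}[s,t]$, which the paper compresses into the phrase ``the standard argument in Hurwitz theory.''
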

	
\begin{proof}
The standard argument in Hurwitz theory using monodromy representation and a presentation (\ref{equationPresentation}) of the fundamental group of $X_\sigma^o$ shows that $\mathbb{R}H^\bullet_{g, d}(\bm{\lambda})$ is the signed weighted number of tuples of permutations $\bm{\alpha},\bm{\beta},\bm{\gamma}\bm{\delta}$ satisfying \begin{equation}
[\alpha_1,\beta_1] \ldots [\alpha_h,\beta_h] = \gamma_0^2 \ldots \gamma_k^2 \delta_1 \ldots, \delta_n
\end{equation} and such that $\delta_i$ has cycle type $\bm{\lambda}_i$. The weight is constant equal to $1 / d!$ and the sign is \begin{equation}
\varepsilon(\gamma_0) \ldots \varepsilon(\gamma_k ) \varepsilon(\bm{\lambda}_1)^{x_1} \ldots \varepsilon(\bm{\lambda}_n)^{x_n}
\end{equation} for some coefficients $x_1,\ldots,x_n \in \{0,1\}$ defined by the admissible class $a$. This signed weighted sum is the coefficient of the identity in \begin{equation}
\frac{\varepsilon(\bm{\lambda}_1)^{x_1} \ldots \varepsilon(\bm{\lambda}_n)^{x_n}}{d!} \mathfrak{K}^h \mathfrak{L}^{k+1} c_{\bm{\lambda}_1} \ldots c_{\bm{\lambda}_n} \in Z \mathbb{C} \mathfrak{S}_d.
\end{equation} If every partition $\bm{\lambda}_i$ is even, then it is the required formula. Otherwise, Corrolary \ref{corollaryVectorL} implies that this product vanishes so that the sign $\varepsilon(\bm{\lambda}_1)^{x_1} \ldots \varepsilon(\bm{\lambda}_n)^{x_n}$ does not contribute. Thus, $\mathbb{R}H^\bullet_{g, d }(\bm{\lambda})$ does not depend on the choice of the admissible class $a$. 

In order to prove it for the connected numbers $\mathbb{R}H_{g, d}(\bm{\lambda})$, we use the relation between connected and disconnected Hurwitz numbers, see \cite[Chapter 10]{bookCavalieriMiles} in the Complex case. Choose an admissible class $a$ and introduce the generating series $\mathcal{H}$ of the connected signed Real Hurwitz of genus $g$ with $n$ pairs of marked points. It depends on formal variables $p_{i,j}$ for $i = 1,\ldots,n$ and $j \geq 1$. We define the formal series $\mathcal{H}$ as follows. Given partitions $\bm{\lambda} = (\bm{\lambda}_1,\ldots,\bm{\lambda}_n)$ of an integer $d$, $\mathbb{R}H_{g, d}(\bm{\lambda})$ is the coefficient of the monomial \begin{equation}
\prod_{i=1}^n \prod_{k=1}^{l(\bm{\lambda}_i)} p_{i,\bm{\lambda}_{i,k}}
\end{equation} in $\mathcal{H}$. We obtain similarly $\mathcal{H}^\bullet$ by considering the disconnected numbers $\mathbb{R}H^\bullet_{g, d}(\bm{\lambda})$. The two generating series are easily seen to satisfy \begin{equation}
\mathcal{H}^\bullet = \exp( \mathcal{H} ).
\label{equationConnectedDisconnected}
\end{equation} This relation can be inverted to obtain $\mathcal{H} = \log( \mathcal{H}^\bullet )$. Since the right-hand side does not depend on $a$, the left-hand side does not either.
\end{proof}

\begin{remark}
The term \textit{connected} qualifying the numbers $\mathbb{R}H_{g, d}(\bm{\lambda})$ in Definition \ref{definitionRealHurwitz} refers to the fact that it corresponds to the count of holomorphic maps for which the quotient of the domain by the involution is connected (orientable for doublet domains, non-orientable for connected domains). It is expressed by the usual relation (\ref{equationConnectedDisconnected}) between connected and disconnected theories. 

The connected signed Real Hurwitz number $\mathbb{R}H_{g, d}(\bm{\lambda})$ can be written as the sum of the \textit{connected contributions} and the \textit{doublet contributions}. In Corollary \ref{corollaryContributionDoublets}, we give a closed formula of the doublet contributions in terms of the Complex Hurwitz numbers. Since this formula does not depend on the choice of the admissible class, both the connected and doublet contributions are independent of this choice. 
\label{remarkConventionConnected}
\end{remark}


\begin{theorem}
The disconnected signed Real Hurwitz numbers are given explicitly by \begin{equation}
\mathbb{R} H_{g,d}^\bullet(\bm{\lambda}) = \sum_{\mu^T = \mu} \left( (-1)^\frac{d - r(\mu)}{2} \frac{dim(\mu)}{d!} \right)^{1-g} \prod_{i=1}^n f_{\bm{\lambda}_i}(\mu).
\end{equation} 
\label{theoremRealExplicit}
\end{theorem}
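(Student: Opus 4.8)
The plan is to feed the operator expression from Lemma~\ref{lemmaRealOperatorProduct} into the idempotent basis $(v_\mu)_\mu$ of $Z\mathbb{C}\mathfrak{S}_d$ and then simply read off the coefficient of the identity. By that lemma, $\mathbb{R}H^\bullet_{g,d}(\bm{\lambda})$ is the coefficient of the identity conjugacy class in $\frac{1}{d!}\mathfrak{K}^h\mathfrak{L}^{k+1}c_{\bm{\lambda}_1}\cdots c_{\bm{\lambda}_n}$ with $g=2h+k$, so the entire computation reduces to evaluating this product and extracting one coefficient.

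First I would expand each factor in the idempotent basis. From (\ref{equationBasis2}) we have $c_{\bm{\lambda}_i}=\sum_\mu f_{\bm{\lambda}_i}(\mu)\,v_\mu$; the expression for $\mathfrak{K}$ from Subsection~\ref{subsectionRepresentationTheory} reads $\mathfrak{K}=\sum_\mu (d!/dim(\mu))^2 v_\mu$; and the formula for $\mathfrak{L}$ established in the proof of Corollary~\ref{corollaryVectorL} reads $\mathfrak{L}=\sum_\mu \frac{d!}{dim(\mu)}SFS_{\mathfrak{S}_d,\varepsilon}(\mu)\,v_\mu$. Because the $v_\mu$ are orthogonal idempotents, the product collapses to a single sum indexed by $\mu$:
\begin{equation}
\mathfrak{K}^h\mathfrak{L}^{k+1}\prod_{i=1}^n c_{\bm{\lambda}_i}=\sum_\mu \left(\frac{d!}{dim(\mu)}\right)^{2h+k+1}SFS_{\mathfrak{S}_d,\varepsilon}(\mu)^{k+1}\prod_{i=1}^n f_{\bm{\lambda}_i}(\mu)\; v_\mu.
\end{equation}

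Next I would extract the coefficient of the identity conjugacy class. By (\ref{equationBasis1}) and the reality of the characters of $\mathfrak{S}_d$, the coefficient of the identity class in $v_\mu$ equals $dim(\mu)^2/d!$. Multiplying the display above by $1/d!$ and using $2h+k+1=g+1$ to combine the powers of $d!$ and $dim(\mu)$ into the single exponent $1-g$, the coefficient of the identity becomes
\begin{equation}
\mathbb{R}H^\bullet_{g,d}(\bm{\lambda})=\sum_\mu \left(\frac{dim(\mu)}{d!}\right)^{1-g}SFS_{\mathfrak{S}_d,\varepsilon}(\mu)^{k+1}\prod_{i=1}^n f_{\bm{\lambda}_i}(\mu).
\end{equation}

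Finally I would invoke Proposition~\ref{propositionSFSsymmetric}: the indicator $SFS_{\mathfrak{S}_d,\varepsilon}(\mu)$ vanishes unless $\mu$ is symmetric, which restricts the sum to $\mu^T=\mu$, and for such $\mu$ it equals $(-1)^{(d-r(\mu))/2}=\pm1$. The only point requiring any care — and hence the main obstacle — is the bookkeeping of the exponent of the sign: since $SFS_{\mathfrak{S}_d,\varepsilon}(\mu)=\pm1$ its even powers are trivial, so $SFS_{\mathfrak{S}_d,\varepsilon}(\mu)^{k+1}=SFS_{\mathfrak{S}_d,\varepsilon}(\mu)^{1-g}$ because $(k+1)-(1-g)=2h+2k=2(h+k)$ is even. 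Substituting $(-1)^{(d-r(\mu))/2}$ and pulling the sign inside the exponent $1-g$ then yields exactly
\begin{equation}
\mathbb{R}H^\bullet_{g,d}(\bm{\lambda})=\sum_{\mu^T=\mu}\left((-1)^{\frac{d-r(\mu)}{2}}\frac{dim(\mu)}{d!}\right)^{1-g}\prod_{i=1}^n f_{\bm{\lambda}_i}(\mu),
\end{equation}
as claimed.
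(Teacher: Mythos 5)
Your proposal is correct and follows essentially the same route as the paper's own proof: expand $\mathfrak{K}$, $\mathfrak{L}$ and the $c_{\bm{\lambda}_i}$ in the idempotent basis, collapse the product from Lemma \ref{lemmaRealOperatorProduct}, read off the coefficient of the identity, and conclude with Proposition \ref{propositionSFSsymmetric}. If anything, you are slightly more explicit than the paper about the parity bookkeeping that lets one replace $SFS_{\mathfrak{S}_d,\varepsilon}(\mu)^{k+1}$ by $SFS_{\mathfrak{S}_d,\varepsilon}(\mu)^{1-g}$, a step the paper performs silently.
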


\begin{proof}
It is a computation done by expressing $\mathfrak{K},\mathfrak{L},c_\lambda$ in the idempotent basis, see (\ref{equationBasis1}) and (\ref{equationBasis2}), and using Lemma \ref{lemmaRealOperatorProduct}. We have \begin{align}
\frac{1}{d!} \mathfrak{K}^h \mathfrak{L}^{k+1} c_{\bm{\lambda}_1} \ldots c_{\bm{\lambda}_n} &= \frac{1}{d!}\sum_{| \mu | = d} \left( \frac{dim(\mu)}{d!} \right)^{- 2h - k-1} SFS(\mu)^{k+1} f_{\bm{\lambda}_1}(\mu) \ldots f_{\bm{\lambda}_n}(\mu) v_\mu \\
	&= \frac{1}{d!} \sum_{\mu^T = \mu  } \left(SFS(\mu) \frac{dim(\mu)}{d!} \right)^{-1-g } f_{\bm{\lambda}_1}(\mu) \ldots f_{\bm{\lambda}_n}(\mu) v_\mu.
\end{align} The coefficient of the identity in this expression is \begin{equation}
 \sum_{\mu^T = \mu} \left(SFS(\mu) \frac{dim(\mu)}{d!} \right)^{1-g} f_{\bm{\lambda}_1}(\mu) \ldots f_{\bm{\lambda}_n}(\mu).
\end{equation} Finally, we use Proposition \ref{propositionSFSsymmetric} to express $SFS(\mu)$.
\end{proof}

Theorem \ref{theoremRealExplicit} recovers from a different perspective the closed formulas obtained in \cite{articleGeorgievaIonel} using Real Gromov-Witten invariants.

	\subsection{Doublet contribution}
	\label{subsectionDoubletContribution}

The connected signed Real Hurwitz number $\mathbb{R}H_{g,d}(\bm{\lambda})$ is the sum of the doublet contributions and the connected contributions, defined by replacing the condition (a) of Definition \ref{definitionRealHurwitz} by respectively \begin{enumerate}[label = (\alph*')]
	\item $(X',\sigma')$ is a doublet
\end{enumerate} and \begin{enumerate}[label = (\alph*'')]
	\item $(X',\sigma')$ is a connected Real Riemann surface.
\end{enumerate} In the present subsection, we study the doublet contribution. It vanishes if the degree is odd. If $d = 2d'$, Corollary \ref{corollaryContributionDoublets} expresses it in terms of a signed sum of Complex Hurwitz numbers $H_{g,d'}(\bm{\eta})$ with $2n$ marked points. In light of Corollary \ref{corollaryContributionDoublets}, the doublet contribution turns out to be independent of the admissible class chosen, thus so is the connected contribution. 

Since modifying the admissible class by a loop around a puncture modifies the count by the sign $\varepsilon (\bm{\lambda}_i)$ of the partition corresponding to the ramification profile around the pucture, this independance property is equivalent to the vanishing of the doublet contribution and of the connected contribution whenever one of the partitions $\bm{\lambda}_i$ is odd.\\

Let $(X,B,\sigma)$ be a marked connected Real Riemann surface of genus $g$. Let \begin{equation}
f : (D,\sigma') \rightarrow (X,\sigma)
\end{equation} be a degree $d = 2d'$ ramified Real cover by a doublet, corresponding to the ramification profile $\bm{\lambda} = (\bm{\lambda}_1,\ldots,\bm{\lambda}_n)$. Choosing a connected component $X'$ of $D$, one obtains a degree $d'$ ramified cover \begin{equation}
f' = f \vert_{X'} : X' \rightarrow X.
\end{equation} Consider a cross-cap circle in $X^o$. Denote by $\mu$ the partition of $d'$ describing the monodromy of $f'$ around this cross-cap circle in $X^o$ and by $\gamma$ the loop in $X^o_\sigma$ represented in $X^o$ by a path from a point of the cross-cap circle to its opposite. 

\begin{lemma}
Let $\gamma$ and $\mu$ be as above. They satisfy \begin{equation}
\varepsilon \rho_f(\gamma) = (-1)^{l(\mu)}.
\end{equation}
\label{lemmaSignCrosscapDoublet}
\end{lemma}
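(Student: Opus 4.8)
The plan is to compute the permutation $\rho_f(\gamma)$ explicitly and read off its sign. Since $(D,\sigma')$ is a doublet with chosen component $X' \cong Y$, the involution $\sigma'$ acts freely and identifies $D/\sigma'$ with $Y$; under this identification the descended degree-$d$ cover $D/\sigma' \to X_\sigma^o$ becomes the composite $Y \xrightarrow{f'} X \to X_\sigma^o$, whose monodromy representation is exactly $\rho_f$ (this is the viewpoint already used in Lemma \ref{lemmaRealOperatorProduct}). I would fix a base point $p$ on the quotient $C_\sigma$ of the cross-cap circle $C$ together with a lift $\tilde p \in C$. Because $\sigma$ acts on $C$ without fixed points (antipodally), the fibre of $X \to X_\sigma^o$ over $p$ is the pair $\{\tilde p, \sigma \tilde p\}$, and pulling back through $f'$ splits the fibre of $\rho_f$ over $p$ as a disjoint union $F^+ \sqcup F^-$, where $F^+ = (f')^{-1}(\tilde p)$ and $F^- = (f')^{-1}(\sigma \tilde p)$, each of cardinality $d'$.

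Next I would analyse how $\rho_f(\gamma)$ moves this fibre. By definition (and as in the caption of Figure \ref{figurePresentation} and the presentation (\ref{equationPresentation})), the generator $\gamma$ is represented in $X^o$ by a path joining $\tilde p$ to its image $\sigma \tilde p$, that is, by half of the circle $C$. Lifting this path through $f'$ carries $F^+$ bijectively onto $F^-$, so $\rho_f(\gamma)$ interchanges the two halves $F^+$ and $F^-$. On the other hand $\gamma^2$ is represented by the full loop $C$ based at $\tilde p$, so $\rho_f(\gamma)^2$ preserves $F^+$ and restricts there to the monodromy of $f'$ around $C$; by the very definition of $\mu$ this restriction has cycle type $\mu$.

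Finally I would perform the sign computation. Writing $w = \rho_f(\gamma)$, it is a permutation of the $2d'$-element set $F^+ \sqcup F^-$ that swaps the two $d'$-element halves and whose square restricted to $F^+$ has cycle type $\mu$. Each cycle of $w^2|_{F^+}$ of length $m$ lifts to a single cycle of $w$ of length $2m$, since such a cycle necessarily alternates between $F^+$ and $F^-$; hence $w$ has exactly $l(\mu)$ cycles, of lengths $2\mu_1, \ldots, 2\mu_{l(\mu)}$. A cycle of even length $2\mu_j$ is an odd permutation, so $\varepsilon(w) = \prod_{j} (-1)^{2\mu_j - 1} = (-1)^{l(\mu)}$, which is the asserted value of $\varepsilon \rho_f(\gamma)$.

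The arithmetic core (the cycle count and sign) is elementary; the delicate point, and the step I expect to require the most care, is the bookkeeping in the first two paragraphs. One must check that the doublet structure and the equivariance $f \circ \sigma' = \sigma \circ f$ genuinely identify the descended cover with $Y \to X_\sigma^o$, and that $\gamma$ and $\gamma^2$ lift to the half-circle and the full circle as claimed. All of this should be verified independently of the base-point choices; these affect $\rho_f$ only up to conjugacy in $\mathfrak{S}_d$ and therefore leave both the cycle type $\mu$ and the sign $\varepsilon \rho_f(\gamma)$ unchanged.
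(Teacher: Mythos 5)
Your proof is correct and follows essentially the same route as the paper's: both identify the quotient cover $D/\sigma' \to X_\sigma^o$ with the composite $X' \xrightarrow{f'} X \to X_\sigma$, and both show that $\rho_f(\gamma)$ has cycle type $(2\mu_1, 2\mu_2, \ldots)$ — the parts of $\mu$ doubled — before reading off the sign from the evenness of $d=2d'$. The paper phrases the doubling geometrically (counting preimages of the endpoints of the half-circle on each lifted circle of the cross-cap circle), while you phrase it algebraically (a permutation swapping $F^+$ and $F^-$ whose square restricts on $F^+$ to cycle type $\mu$), but the underlying computation is identical.
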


\begin{proof}
By definition, \begin{equation}
\varepsilon \rho_f(\gamma) = (-1)^{d-l(\nu)}
\end{equation} where $\nu$ is the partition of $d$ corresponding to the monodromy of $f$ around the loop $\gamma$ in $X_\sigma^o$. Since $d = 2d'$ is even, we prove the Lemma by showing that $l(\nu) = l(\mu)$. 

Denote by $p$ and $q$ the starting point and endpoint of the path in $X^o$ representing $\gamma$. Choose a part $\mu_i$ of $\mu$, that is a lift through $f'$ of the cross-cap circle in $X^o$. The point $p$ has $\mu_i$ pre-images, and between any two consecutive pre-images of $p$ there is exactly one pre-image of $q$. 

In the quotient $X_\sigma^o$, $p$ and $q$ are identified. To the lift of the cross-cap circle by $f'$ corresponds a lift of the loop $\gamma$ to $D / \sigma' \simeq X'$. The point $p=q$ has $2 \mu_i$ pre-images on the this lift. Thus, the partition $\nu$ of $d = 2d'$ can be obtained from the partition $\mu$ of $d'$ as \begin{equation}
\nu = (2 \mu_1,2 \mu_2,\ldots).
\end{equation} In particular, $l(\nu) = l(\mu)$ and the Lemma is proved. 
\end{proof}

Choose a presentation of $(X,\sigma)$ with $g+1$ cross-caps, denote by $\gamma_0,\ldots,\gamma_g$ the corresponding loops in $X_\sigma^o$ and choose the admissible class $a = \gamma_0 + \ldots + \gamma_g$. The monodromy of $f' : X' \rightarrow X$ is described by $2n$ partitions $\bm{\lambda}_1^+,\bm{\lambda}_1^-,\ldots,\bm{\lambda}_n^+,\bm{\lambda}_n^-$ of $d'$ such that $\bm{\lambda}_i$ is the union of $\bm{\lambda}_i^+$ and $\bm{\lambda}_i^-$. 

The $g+1$ cross-cap circles separate $X$ in two halves. The choice of a half leads to an ordering function $s : \{ 1,\ldots,n \} \rightarrow \{ \pm \}$ such that $b_i^{s(i)}$ belongs to the chosen half for all $i$. 

\begin{proposition}
Let $f : (D,\sigma') \rightarrow (X,\sigma)$, $a = \gamma_0 + \ldots + \gamma_g$ and $\bm{\lambda}_1^+,\bm{\lambda}_1^-,\ldots,\bm{\lambda}_n^+,\bm{\lambda}_n^-$ be as above. Choose a half of $X$. Then \begin{equation}
\varepsilon \rho_f(a) = (-1)^{d'(g-1)} \prod_{i=1}^n \varepsilon \left(\bm{\lambda}_i^{s(i)} \right)
\end{equation} where $b_i^{s(i)}$ belongs to the chosen half for all $i$.
\label{propositionSignDoubletCover}
\end{proposition}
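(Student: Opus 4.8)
The plan is to reduce the whole statement to a single Euler-characteristic computation for the restriction of the cover to one of the two halves of $X$. Since $a = \gamma_0 + \ldots + \gamma_g$ and $\varepsilon\rho_f \colon H_1(X_\sigma^o) \to \{\pm 1\}$ is a homomorphism (to an abelian group written multiplicatively), I would first write
$$\varepsilon\rho_f(a) = \prod_{j=0}^g \varepsilon\rho_f(\gamma_j).$$
Denoting by $\mu_j$ the partition of $d'$ giving the monodromy of $f'$ around the $j$-th cross-cap circle, Lemma \ref{lemmaSignCrosscapDoublet} applies to each $\gamma_j$ and gives $\varepsilon\rho_f(\gamma_j) = (-1)^{l(\mu_j)}$, so that
$$\varepsilon\rho_f(a) = (-1)^{\sum_{j=0}^g l(\mu_j)}.$$
The entire proposition thus becomes a congruence modulo $2$ for $\sum_{j} l(\mu_j)$.

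Next I would analyse the chosen half $X^+$, one of the two connected halves into which the $g+1$ cross-cap circles cut $X$. Since $X$ is recovered by gluing $X^+$ to its $\sigma$-image along these $g+1$ circles, which have vanishing Euler characteristic, inclusion--exclusion gives $\chi(X) = 2\chi(X^+)$, whence $\chi(X^+) = 1-g$. The branch points contained in $X^+$ are precisely the $b_i^{s(i)}$, with ramification profile $\bm{\lambda}_i^{s(i)}$ for $f'$. Setting $Y^+ = (f')^{-1}(X^+)$, this is a degree-$d'$ cover of $X^+$ branched only over those $n$ points; as the cross-cap circles avoid the branch locus, $Y^+$ is a compact (possibly disconnected) surface with boundary, and Riemann--Hurwitz yields
$$\chi(Y^+) = d'(1-g) - \sum_{i=1}^n \bigl( d' - l(\bm{\lambda}_i^{s(i)}) \bigr).$$

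The key point is then a parity identity. The boundary of $Y^+$ is the $(f')$-preimage of the $g+1$ cross-cap circles, which consists of exactly $\sum_{j=0}^g l(\mu_j)$ circles, while for any compact orientable surface with boundary the number of boundary circles is congruent modulo $2$ to its Euler characteristic. Combining this with the Riemann--Hurwitz formula gives
$$\sum_{j=0}^g l(\mu_j) \equiv d'(1-g) - nd' + \sum_{i=1}^n l(\bm{\lambda}_i^{s(i)}) \pmod 2.$$
To finish, I would convert lengths back to signs through $\varepsilon(\bm{\lambda}_i^{s(i)}) = (-1)^{d' - l(\bm{\lambda}_i^{s(i)})}$; substituting the congruence into $\varepsilon\rho_f(a) = (-1)^{\sum_j l(\mu_j)}$ and using $d'(1-g) \equiv d'(g-1) \pmod 2$ produces exactly $(-1)^{d'(g-1)} \prod_{i=1}^n \varepsilon(\bm{\lambda}_i^{s(i)})$.

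I expect the main obstacle to be the topological bookkeeping rather than the final arithmetic: correctly pinning down $X^+$ (the fact that the $g+1$ cross-cap circles separate $X$ into two connected halves and that $\chi(X^+)=1-g$, which rest on the polygonal description of $X_\sigma^o$), and verifying that $Y^+$ is an honest compact surface-with-boundary so that both Riemann--Hurwitz and the Euler-characteristic parity argument apply cleanly. The remaining steps are routine manipulations of exponents modulo $2$.
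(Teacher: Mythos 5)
Your proposal is correct, and it reaches the result by a genuinely different route in its second half. The first step coincides with the paper's: both apply Lemma \ref{lemmaSignCrosscapDoublet} to each cross-cap class $\gamma_j$ to get $\varepsilon\rho_f(a) = (-1)^{l(\mu_0)+\ldots+l(\mu_g)}$, reducing everything to a congruence modulo $2$ for $\sum_j l(\mu_j)$. From there the paper finishes in one line of monodromy theory: in $\pi_1(X^o)$ the product of the $g+1$ cross-cap circles equals the product of the loops around the punctures of the chosen half, so applying $\varepsilon\circ\rho_{f'}$ turns $\prod_j \varepsilon(\mu_j)$ into $\prod_i \varepsilon\bigl(\bm{\lambda}_i^{s(i)}\bigr)$, and the prefactor $(-1)^{d'(g+1)}=(-1)^{d'(g-1)}$ falls out of converting lengths to signs. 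You instead establish the same congruence by computing $\chi\bigl((f')^{-1}(X^+)\bigr)$ via Riemann--Hurwitz over the half $X^+$ (with $\chi(X^+)=1-g$) and invoking the parity identity $\chi \equiv \#\{\text{boundary circles}\} \pmod 2$ for compact orientable surfaces with boundary, the boundary circles being exactly the $\sum_j l(\mu_j)$ preimage circles of the cross-caps. The two mechanisms are two faces of the same fact — the sign homomorphism applied to the boundary relation of the half is precisely the Euler-characteristic parity — but yours trades the paper's short fundamental-group relation for a self-contained topological computation; the cost is the extra bookkeeping you flag (orientability of $(f')^{-1}(X^+)$, which is automatic since it sits inside the Riemann surface $X'$, and the surface-with-boundary hypotheses for Riemann--Hurwitz), while the benefit is that you never need to justify the precise form of the relation in $\pi_1(X^o)$ between cross-cap circles and puncture loops.
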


\begin{proof}
Denote by $\mu_0,\ldots,\mu_g$ the partitions of $d'$ describing the momodromy of $f'$ around the $g+1$ cross-cap circles in $X^o$. By Lemma \ref{lemmaSignCrosscapDoublet}, \begin{equation}
\varepsilon  \rho_f(a) = (-1)^{l(\mu_0) + \ldots + l(\mu_g)} = (-1)^{d'(g+1)} \varepsilon(\mu_0) \ldots \varepsilon(\mu_g).
\end{equation} In the fundamental group of $X^o$, the product of the cross-cap circles equals the product of the loops around the punctures of the chosen half. Applying the monodromy representation morphism of $f$ to this relation leads to \begin{equation}
\varepsilon  \rho_f(a) = (-1)^{l(\mu_0) + \ldots + l(\mu_g)} = (-1)^{d'(g+1)} \prod_{i=1}^n \varepsilon \left(\bm{\lambda}_i^{s(i)} \right).
\end{equation} Since $(-1)^{d'(g+1)} = (-1)^{d'(g-1)}$, the proposition is proved. 
\end{proof}

As expected, the sign expressed in Proposition \ref{propositionSignDoubletCover} does not depend on the choices made when the partitions $\bm{\lambda}$ of $d =2d'$ are even. 

\begin{corollary}
The contribution of doublet covers to $\mathbb{R}H_{g, 2d'}(\bm{\lambda})$ is \begin{equation}
\frac{(-1)^{d'(g-1)}}{2} \sum_{\bm{\lambda}_i = \bm{\lambda^+}_i \sqcup \bm{\lambda^-}_i} \prod_{i=1}^n \varepsilon \left( \bm{\lambda}_i^{s(i)} \right)  H_{g,d}(\bm{\lambda^+}, \bm{\lambda^-})
\end{equation} for any function \begin{equation}
s : \{ 1,\ldots,n\} \rightarrow \{ \pm \}.
\end{equation} It vanishes if at least one of the partitions $\bm{\lambda}_i$ is odd.
\label{corollaryContributionDoublets}
\end{corollary}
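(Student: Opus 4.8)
The plan is to translate the count of doublet covers of $(X,\sigma)$ into a count of ordinary complex covers of $X$, keeping track of the sign provided by Proposition \ref{propositionSignDoubletCover}, and then to recognise the resulting sum as a signed combination of Complex Hurwitz numbers. First I would recall that a doublet domain $(D,\sigma')$ is, up to isomorphism, of the form $(X' \sqcup \overline{X'}, \text{swap})$, and that the Real map $f$ is entirely determined by its restriction $f' = f|_{X'} : X' \to X$ to one connected component, which is an ordinary degree $d'$ holomorphic map. Thus isomorphism classes of degree $d = 2d'$ Real doublet covers $f$ correspond to isomorphism classes of degree $d'$ Complex covers $f'$ of $X$, and the ramification profile data $\bm{\lambda}_i$ of $f$ over the pair $b_i^\pm$ decomposes as a disjoint union $\bm{\lambda}_i = \bm{\lambda}_i^+ \sqcup \bm{\lambda}_i^-$, where $\bm{\lambda}_i^{\pm}$ records the ramification of $f'$ over $b_i^{\pm}$.

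Next I would address the combinatorial bookkeeping. A degree $d'$ Complex cover $f'$ of $X$ with prescribed ramification over the $2n$ points $b_1^+, b_1^-, \ldots, b_n^+, b_n^-$, given by the partitions $(\bm{\lambda}^+, \bm{\lambda}^-) = (\bm{\lambda}_1^+, \ldots, \bm{\lambda}_n^+, \bm{\lambda}_1^-, \ldots, \bm{\lambda}_n^-)$, is exactly what is counted (with weight $1/\#\mathrm{Aut}(f')$) by $H_{g,d'}(\bm{\lambda}^+, \bm{\lambda}^-)$. Here I would invoke Proposition \ref{propositionSignDoubletCover} to assign the sign $(-1)^{d'(g-1)} \prod_{i=1}^n \varepsilon(\bm{\lambda}_i^{s(i)})$ to each such $f$, noting that this sign depends only on the partition data $(\bm{\lambda}^+, \bm{\lambda}^-)$ and not on the individual cover; hence it factors out of the automorphism-weighted sum over covers with fixed ramification, turning that sub-sum into precisely $\varepsilon(\text{sign}) \cdot H_{g,d'}(\bm{\lambda}^+, \bm{\lambda}^-)$. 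Summing over all decompositions $\bm{\lambda}_i = \bm{\lambda}_i^+ \sqcup \bm{\lambda}_i^-$ then yields the displayed sum, up to the automorphism discrepancy.

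The main subtlety, and the step I expect to be the real obstacle, is the factor of $\tfrac{1}{2}$ and the reconciliation of automorphism groups. A doublet cover $f$ and its component $f'$ do not have the same automorphism group: the switch of the two components of $D$ commuting with $f$ contributes, and choosing which component to call $X'$ introduces a two-to-one redundancy in the correspondence between Real doublet covers and Complex covers. I would argue carefully that $\#\mathrm{Aut}(f)$ and $\#\mathrm{Aut}(f')$ differ exactly by the factor that produces the $\tfrac12$ in front — morally, the Real automorphisms of the doublet are the automorphisms of $f'$ together with an order-two component swap, so the weighted count of doublets equals $\tfrac12$ times the weighted count of the underlying complex covers. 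One must also verify that a cover $f'$ and its conjugate $\overline{f'}$ give rise to the same doublet, accounting for the same factor of two; these two effects should be disentangled so that exactly one factor of $\tfrac12$ survives.

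Finally I would confirm independence of the auxiliary choices. The ordering function $s$ only enters through $\prod_i \varepsilon(\bm{\lambda}_i^{s(i)})$, and swapping $s(i)$ from $+$ to $-$ multiplies that factor by $\varepsilon(\bm{\lambda}_i^+)\varepsilon(\bm{\lambda}_i^-) = \varepsilon(\bm{\lambda}_i)$; since the total formula is summed over all decompositions, a reindexing $\bm{\lambda}_i^+ \leftrightarrow \bm{\lambda}_i^-$ compensates, so the result is genuinely independent of $s$. The vanishing claim when some $\bm{\lambda}_i$ is odd then follows: under the involution $\bm{\lambda}_i^+ \leftrightarrow \bm{\lambda}_i^-$ the Hurwitz number $H_{g,d'}(\bm{\lambda}^+,\bm{\lambda}^-)$ is unchanged while the sign $\prod_j \varepsilon(\bm{\lambda}_j^{s(j)})$ picks up the factor $\varepsilon(\bm{\lambda}_i) = -1$, so the terms cancel in pairs. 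This matches the independence-of-admissible-class discussion preceding the statement, and closes the argument.
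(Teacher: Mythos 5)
Your proposal is correct and follows essentially the same route as the paper's proof: restrict the doublet cover $f$ to a component to get a complex cover $f'$, apply Proposition \ref{propositionSignDoubletCover} for the sign (which depends only on the splitting $\bm{\lambda} = \bm{\lambda^+} \sqcup \bm{\lambda^-}$), account for the factor $\tfrac12$ via the automorphism/double-counting bookkeeping, and prove the vanishing by pairing summands under $\bm{\lambda}_i^+ \leftrightarrow \bm{\lambda}_i^-$. The ``disentangling'' you flag as the main obstacle is resolved in the paper by exactly the dichotomy you hint at: either $f' \not\cong \sigma \circ f'$, in which case $\#\mathrm{Aut}(f) = \#\mathrm{Aut}(f')$ and the two \emph{distinct} classes $[f']$, $[\sigma \circ f']$ reconstruct the same doublet, or $f' \cong \sigma \circ f'$, in which case there is no double-counting but $\#\mathrm{Aut}(f) = 2\,\#\mathrm{Aut}(f')$ --- so in each case precisely one factor of $2$ appears, never both.
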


\begin{proof}
Choose a description of $(X,\sigma)$ with $g+1$ cross-cap circles as above. It is possible to require that the points $b_i^{s(i)}$ belong to the same half. Start with a ramified Real cover \begin{equation}
f : (D,\sigma') \rightarrow (X,\sigma)
\end{equation} corresponding to the ramification profile $\bm{\lambda}$. Choosing a connected component $X'$ of $D$ defines a ramified cover \begin{equation}
f : X' \rightarrow X
\end{equation} corresponding to the ramification profile $\bm{\lambda^+}$ and $\bm{\lambda^-}$ around $B^+$ and $B^-$ respectively. The choice of the other connected component leads to a ramified cover isomorphic to \begin{equation}
\sigma \circ f' : \overline{X'} \rightarrow X. 
\end{equation} In particular, its ramification profile around $B^+$ and $B^-$ is $\bm{\lambda^-}$ and $\bm{\lambda^+}$ respectively. The initial ramified Real cover can be reconstructed from either $f'$ or $\sigma \circ f'$. Thus the doublet contribution can be written as a sum over the ramified covers $f'$. 

The overall factor $1/2$ in the statement of Corollary \ref{corollaryContributionDoublets} is obtained as follows. If the ramified covers $f'$ and $\sigma \circ f'$ are not isomorphic, then \begin{equation}
\# Aut(f) = \# Aut(f') = \# Aut( \sigma \circ f')
\end{equation} and we divide by $2$ to compensate the fact that the two non-isomorphic covers $f'$ and $\sigma \circ f'$ come from a single ramified Real cover $f$. Otherwise, $f'$ and $\sigma \circ f'$ are isomorphic, and \begin{equation}
\# Aut(f) = 2 \# Aut(f').
\end{equation} The formula is proved. Notice that at this point of the proof, it depends on the presentation of $X$ chosen.

Suppose now that $\bm{\lambda}_i$ is odd. Consider a summand, which corresponds to a splitting $\bm{\lambda}_j = \bm{\lambda^+}_j \sqcup \bm{\lambda^-}_j$ for all $j$. Exactly one of $\bm{\lambda^+}_i$ and $\bm{\lambda^-}_i$ is odd and the other is even. Thus, the summand considered is the opposite of the one obtained by exchanging $\bm{\lambda^+}_i$ and $\bm{\lambda^-}_i$. Therefore, all the summands come by pairs whose contribution vanishes, and the doublet contribution vanish. 

As a consequence, the doublet contribution is well-defined - it does not depend on the admissible class chosen.
\end{proof}

	\subsection{Definition of the signs and closed formulas : doublet target}

Let $(X,B,\sigma)$ be a marked \textit{doublet}. The degree $d$ Real holomorphic maps $(X',\sigma') \rightarrow (X,\sigma)$ unramified over $X \setminus B$ correspond to the degree $d$ holomorphic maps\footnote{The quotient $X_\sigma$ is not canonically oriented - we make an arbitrary choice of orientation.} $X'_{\sigma'} \rightarrow X_\sigma$ unramified over $X_\sigma \setminus B_\sigma$. Thus, the connected and disconnected Hurwitz numbers $H_{g,d}(\bm{\lambda})$ and $H_{g,d}^\bullet(\bm{\lambda})$ are automorphism-weighted counts of Real covers of the marked doublet $(X,\sigma)$. 

In the present paper, we modify those counts by introducing signs. This is needed for the degeneration formulas to hold. 

\begin{definition}[Puncture class]
The \textit{puncture class} of the marked doublet $(X,B,\sigma)$ is the homology class $a$ of $X_\sigma^o$ defined as the sum of the simple loops $(\delta_i)$ around the punctures corresponding to the marked points $b_i = \{ b_i^\pm \}_\sigma$ for which the marked point $b_i^+$ belongs to a chosen connected component of $X$. 
\label{definitionPunctureClass}
\end{definition}

The Definition \ref{definitionPunctureClass} does not depend on the connected component of $X$ chosen since the sum of all the simple loops around the punctures of $X_\sigma^o$ is the trivial homology class.

\begin{definition}[Doublet Hurwitz numbers]
Let \begin{itemize}
	\item $g,d,n$ be non-negative integers,
	\item $(X,B,\sigma)$ the marked doublet of a connected genus $g$ Riemann surface with $B = \{ b_1^\pm,\ldots,b_n^\pm \}$,
	\item $\bm{\lambda} = (\bm{\lambda}_1,\ldots,\bm{\lambda}_n)$ a sequence of partitions of the integer $d$ and
	\item $a \in H_1(X_\sigma^o)$ be the puncture class of $(X,B,\sigma)$. 
\end{itemize} The \textit{connected Doublet Hurwitz number} $H_{ g,d,a}(\bm{\lambda})$ is the signed weighted-number of isomorphism classes of degree $d$ Real holomorphic maps $f : (X',\sigma') \rightarrow (X,\sigma)$ such that \begin{enumerate}[label = (\alph*)]
	\item $(X',\sigma')$ is a doublet,
	\item The ramification profile over $b_i^\pm$ is given by the partition $\bm{\lambda}_i$ and
	\item $f$ is unramified over $X \setminus B$.
\end{enumerate} The weight of the isomorphism class $[f]$ is $1 / \# \mathrm{Aut}(f)$ and its sign is $\varepsilon \rho_f(a)$, \textit{ie.} \begin{equation}
H_{g, d, a}(\bm{\lambda}) = \sum_{[f]} \frac{\varepsilon \rho_f(a)}{\# Aut(f)}.
\end{equation} The \textit{disconnected Doublet Hurwitz number} $H_{g,d,a}^\bullet(\bm{\lambda})$ is defined similarly by allowing the domain to be any Real Riemann surface in (a).
\label{definitionDoubletHurwitz}
\end{definition}

Denote by $I \subseteq \{ 1 ,\ldots, n \}$ the subset of the punctures of $X_\sigma^o$ defined by Definition \ref{definitionPunctureClass}, so that $a = \sum_{i \in I} \delta_i$. The Doublet Hurwitz numbers satisfy \begin{equation}
H_{g,d,a}(\bm{\lambda}) = \prod_{i \in I} \varepsilon(\bm{\lambda}_i) H_{g,d}(\bm{\lambda}) \text{ and } H_{g,d,a}^\bullet(\bm{\lambda}) = \prod_{i \in I} \varepsilon(\bm{\lambda}_i) H_{g,d}^\bullet(\bm{\lambda})
\end{equation} where the right-hand side of the equations are the Complex Hurwitz numbers, see Subsection \ref{subsectionClassiclaHurwitz}.

We call \textit{canonical marking} of the doublet $(X,\sigma)$ any ordering for which the puncture class vanishes. As we shall see in Subsection \ref{subsectionDegenerationFormula}, non-canonical markings are needed to study degenerations. 

	\subsection{Examples}

The connected signed Real Hurwitz numbers\footnote{With the definitions used in the present paper, connected refers to the morphisms whose source is either a connected Real Riemann surface or a doublet, see Remark \ref{remarkConventionConnected}.} can not be written as the coefficient of a product in $Z \mathbb{C} \mathfrak{S}_d$. Indeed, they correspond to morphisms $\pi_1(X_\sigma^o) \rightarrow \mathfrak{S}_d$ which satisfy the additional condition that the image acts transitively on $\{ 1,\ldots ,d \}$. Still, we can say that $\mathbb{R} H_{g,d}(\bm{\lambda})$ is the signed weighted number of tuples of permutations $\bm{\alpha},\bm{\beta},\bm{\gamma},\bm{\delta}$ with $\delta_i \in c_{\bm{\lambda}_i}$ for all $i$, such that \begin{equation}
[\alpha_1,\beta_1] \ldots [\alpha_h,\beta_h] = \gamma_0^2 \ldots \gamma_k^2 \delta_1 \ldots, \delta_n,
\end{equation} with $g = 2h + k$, and the subgroup generated by those permutations acts transitively on $\{ 1 ,\ldots, d \}$. The sign of such a tuple is $\varepsilon(\gamma_0\ldots \gamma_k)$ and its weight is $1 / d!$. 

\begin{example}
Let us compute the connected signed Real Hurwitz numbers corresponding to $X = \mathbb{P}^1_\mathbb{C}$ with the involution \begin{equation}
\theta : [x:y] \mapsto [- \overline{y} : \overline{x}]
\end{equation} and the set of branch points $B_{\mathbb{P}^1_\mathbb{C}} = \{ [1:0],[0:1] \}$. It corresponds to $g= 0$ and $n=1$. The monodromy representations correspond to the pairs of permutations \begin{equation}
\{ \sigma,\gamma \ | \ \sigma = \gamma^2, \ \gamma \text{ acts transitively}\}
\end{equation} and the sign is $\varepsilon(\gamma)$. There are two sequences of non-vanishing numbers. \begin{itemize}
	\item If the degree $d$ is odd, the transitivity assumption implies that both $\gamma$ and $\sigma$ are $d$-cycles and $\varepsilon(\gamma) = 1$. It corresponds to the ramified Real cover \begin{equation}
	[x:y] \mapsto [x^d : y^d].
\end{equation} Its automorphism group is $\{ [x :y ] \mapsto [x \zeta^k : y \zeta^k ], \ k \in \mathbb{Z} / d \mathbb{Z} \}$ where $\zeta^d = 1$. Thus, \begin{equation}
\mathbb{R}H_{0,d} ((d)) = 1 / d \text{ for } d \text{ odd}.
\end{equation}
	\item If the degree $2d$ is even, the transitivity assumption implies that $\gamma$ is a $2d$-cycle, $\sigma$ is the product of two $d$-cycles and $\varepsilon(\gamma) = -1$. It corresponds to the ramified Real cover whose source is the doublet $\mathbb{P}^1_\mathbb{C} \sqcup \overline{\mathbb{P}^1_\mathbb{C}}$, obtained by doubling the ramified cover \begin{equation}
	[x:y] \mapsto [x^d : y^d].
	\end{equation} Thus, \begin{equation}
\mathbb{R}H_{0,2d} ((d,d)) = -1 / 2d.
\end{equation} 
\end{itemize} These ramified Real covers are depicted in Figure \ref{figureCoverP1}.
\label{exampleP1}
\end{example}

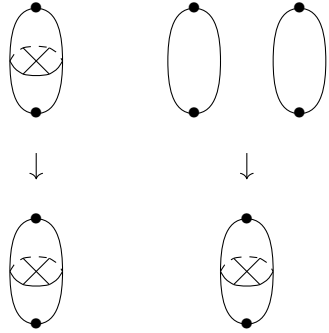
\begin{figure}[h!]
\centering
\begin{tikzpicture}[scale = 0.7]
		\node (6) at (-0.5, -1) {};
		\node (7) at (0.5, -1) {};
		\node (8) at (0, 0) {};
		\node (9) at (0, -2) {};
		\node (10) at (-0.5, 3) {};
		\node (11) at (0.5, 3) {};
		\node (12) at (0, 4) {};
		\node (13) at (0, 2) {};
		\node (14) at (3.5, -1) {};
		\node (15) at (4.5, -1) {};
		\node (16) at (4, 0) {};
		\node (17) at (4, -2) {};
		\node (18) at (2.5, 3) {};
		\node (19) at (3.5, 3) {};
		\node (20) at (3, 4) {};
		\node (21) at (3, 2) {};
		\node (22) at (4.5, 3) {};
		\node (23) at (5.5, 3) {};
		\node (24) at (5, 4) {};
		\node (25) at (5, 2) {};
		\node (32) at (-0.5, -1) {};
		\node (33) at (0.5, -1) {};
		\node (34) at (-0.25, -0.75) {};
		\node (35) at (0.25, -0.75) {};
		\node (36) at (-0.25, -1.25) {};
		\node (37) at (0.25, -1.25) {};
		\node (38) at (-0.5, 3) {};
		\node (39) at (0.5, 3) {};
		\node (40) at (-0.25, 3.25) {};
		\node (41) at (0.25, 3.25) {};
		\node (42) at (-0.25, 2.75) {};
		\node (43) at (0.25, 2.75) {};
		\node (44) at (3.5, -1) {};
		\node (45) at (4.5, -1) {};
		\node (46) at (3.75, -0.75) {};
		\node (47) at (4.25, -0.75) {};
		\node (48) at (3.75, -1.25) {};
		\node (49) at (4.25, -1.25) {};
		\node (50) at (0, 1.25) {};
		\node (52) at (0, 0.75) {};
		\node (53) at (4, 1.25) {};
		\node (54) at (4, 0.75) {};
		\node (55) at (0, 0) {$\bullet$};
		\node (56) at (0, -2) {$\bullet$};
		\node (57) at (4, 0) {$\bullet$};
		\node (58) at (4, -2) {$\bullet$};
		\node (59) at (0, 4) {$\bullet$};
		\node (60) at (0, 2) {$\bullet$};
		\node (61) at (3, 4) {$\bullet$};
		\node (62) at (3, 2) {$\bullet$};
		\node (63) at (5, 4) {$\bullet$};
		\node (64) at (5, 2) {$\bullet$};
		\draw [in=180, out=90] (6.center) to (8.center);
		\draw [in=90, out=0] (8.center) to (7.center);
		\draw [in=0, out=-90] (7.center) to (9.center);
		\draw [in=180, out=-90] (6.center) to (9.center);
		\draw [in=180, out=90] (10.center) to (12.center);
		\draw [in=90, out=0] (12.center) to (11.center);
		\draw [in=0, out=-90] (11.center) to (13.center);
		\draw [in=180, out=-90] (10.center) to (13.center);
		\draw [in=180, out=90] (14.center) to (16.center);
		\draw [in=90, out=0] (16.center) to (15.center);
		\draw [in=0, out=-90] (15.center) to (17.center);
		\draw [in=180, out=-90] (14.center) to (17.center);
		\draw [in=180, out=90] (18.center) to (20.center);
		\draw [in=90, out=0] (20.center) to (19.center);
		\draw [in=0, out=-90] (19.center) to (21.center);
		\draw [in=180, out=-90] (18.center) to (21.center);
		\draw [in=180, out=90] (22.center) to (24.center);
		\draw [in=90, out=0] (24.center) to (23.center);
		\draw [in=0, out=-90] (23.center) to (25.center);
		\draw [in=180, out=-90] (22.center) to (25.center);
		\draw [bend left=15, looseness=1.25, dashed] (32.center) to (34.center);
		\draw [bend left=15, looseness=0.75, dashed] (34.center) to (35.center);
		\draw [bend left=15, looseness=1.25, dashed] (35.center) to (33.center);
		\draw [bend left=15, looseness=1.25] (33.center) to (37.center);
		\draw [bend left=15, looseness=0.75] (37.center) to (36.center);
		\draw [bend left=15, looseness=1.25] (36.center) to (32.center);
		\draw (34.center) to (37.center);
		\draw (36.center) to (35.center);
		\draw [bend left=15, looseness=1.25, dashed] (38.center) to (40.center);
		\draw [bend left=15, looseness=0.75, dashed] (40.center) to (41.center);
		\draw [bend left=15, looseness=1.25, dashed] (41.center) to (39.center);
		\draw [bend left=15, looseness=1.25] (39.center) to (43.center);
		\draw [bend left=15, looseness=0.75] (43.center) to (42.center);
		\draw [bend left=15, looseness=1.25] (42.center) to (38.center);
		\draw (40.center) to (43.center);
		\draw (42.center) to (41.center);
		\draw [bend left=15, looseness=1.25, dashed] (44.center) to (46.center);
		\draw [bend left=15, looseness=0.75, dashed] (46.center) to (47.center);
		\draw [bend left=15, looseness=1.25, dashed] (47.center) to (45.center);
		\draw [bend left=15, looseness=1.25] (45.center) to (49.center);
		\draw [bend left=15, looseness=0.75] (49.center) to (48.center);
		\draw [bend left=15, looseness=1.25] (48.center) to (44.center);
		\draw (46.center) to (49.center);
		\draw (48.center) to (47.center);
		\draw[->] (50.center) to (52.center);
		\draw[->] (53.center) to (54.center);
\end{tikzpicture}
\caption{The two types of ramified Real covers described in Example \ref{exampleP1}. On the left-hand side, the degree $d$ is odd. On the right-hand side, the involution on the source exchanges the two connected components and the degree can be $2d$ for any $d$.}
\label{figureCoverP1}
\end{figure}

\begin{example}
The vanishing of $\mathbb{R}H_{g,d}(\bm{\lambda})$ does not imply that there is no ramified Real cover. For instance, let us describe the covers contributing to \begin{equation}
\mathbb{R}H_{0,2}((2),(2)) = 0.
\end{equation} We take the base to be the pair $(\mathbb{P}^1_\mathbb{C},\theta)$ as in Example \ref{exampleP1}, with the branch points $[1:0],[0:1],[1:w],[-\overline{w}:1]$ for a given $w \in \mathbb{C}^*$. Forgetting about the anti-holomorphic involution, there is up to isomorphism only one such ramified cover. The source $E$ is the compactification of the plane curve \begin{equation}
\{ y^2 = x (x-w)(x+1/ \overline{w} )\}
\end{equation} by a point $\infty$ at $1/y = 0$. The cover is defined by \begin{equation}
f : \left\{ \begin{array}{lll}
	(x,y) & \mapsto & [1:x] \\
	\infty & \mapsto & [0:1].
\end{array} \right.
\end{equation} It has one non-trivial automorphism $(x,y) \mapsto (x,-y)$, so that one recovers \begin{equation}
H_{0,2}((2),(2),(2),(2)) = \frac{1}{2}.
\end{equation} There are two different anti-holomorphic involutions $\sigma_\pm$ which can be given to $E$ in order to make $f$ a Real ramified cover. The set $\{(x,y) \in E \ | \ |x | = 1  \}$ is the union of two circles. The involution $\sigma_+$ preserves the two components while $\sigma_-$ switches them, see Figure \ref{figureCoverElliptic}.

Represent the admissible class by the path \begin{equation}
\gamma : t \in [0,\pi] \mapsto [1 : \exp(it)]
\end{equation} in $\mathbb{P}^1_\mathbb{C}$. Denote by $p,q$ the starting point and end point of $\gamma$ and by $p_0,p_1,q_0,q_1$ their preimages by $f$, where $p_i,q_i$ belong to the same component of $E$. The lift of $\gamma$ joins $p_i$ to $q_i$. With the involution $\sigma_+$, $q_i = \sigma_+(p_i)$, thus the permutation associated to $\gamma$ is the identity. With the involution $\sigma_-$, $q_0 = \sigma_- (p_1)$, thus the permutation associated to $\gamma$ is a transposition. Therefore, $f_+$ and $f_-$ contribute respectively with the sign $+1$ and $-1$. Since the automorphism of $f$ is compatible with $\sigma_+$ and $\sigma_-$, we recover \begin{equation}
\mathbb{R}H_{0,2}((2),(2)) = \frac{1}{2} - \frac{1}{2} = 0.
\end{equation}
\label{exampleHyperelliptic}
\end{example}	

\begin{figure}[h!]
\centering
\begin{tikzpicture}[scale = 0.7]
		\node (0) at (-0.25, 1) {};
		\node (1) at (0.25, 1) {};
		\node (2) at (-0.25, -1) {};
		\node (3) at (0.25, -1) {};
		\node (4) at (-0.5, 0) {};
		\node (5) at (0.5, 0) {};
		\node (6) at (0, 6) {};
		\node (7) at (0, 5) {};
		\node (8) at (0, 3) {};
		\node (9) at (0, 4) {};
		\node (10) at (-0.5, 4.5) {};
		\node (11) at (0.5, 4.5) {};
		\node (12) at (-1.5, 4.5) {};
		\node (13) at (1.5, 4.5) {};
		\node (14) at (3.75, 1) {};
		\node (15) at (4.25, 1) {};
		\node (16) at (3.75, -1) {};
		\node (17) at (4.25, -1) {};
		\node (18) at (3.5, 0) {};
		\node (19) at (4.5, 0) {};
		\node (20) at (4, 6) {};
		\node (21) at (4, 5) {};
		\node (22) at (4, 3) {};
		\node (23) at (4, 4) {};
		\node (24) at (3.5, 4.5) {};
		\node (25) at (4.5, 4.5) {};
		\node (26) at (2.5, 4.5) {};
		\node (27) at (5.5, 4.5) {};
		\node (28) at (-0.5, 0) {};
		\node (29) at (0.5, 0) {};
		\node (30) at (-0.25, 0.25) {};
		\node (31) at (0.25, 0.25) {};
		\node (32) at (-0.25, -0.25) {};
		\node (33) at (0.25, -0.25) {};
		\node (34) at (3.5, 0) {};
		\node (35) at (4.5, 0) {};
		\node (36) at (3.75, 0.25) {};
		\node (37) at (4.25, 0.25) {};
		\node (38) at (3.75, -0.25) {};
		\node (39) at (4.25, -0.25) {};
		\node (40) at (-1.5, 4.5) {};
		\node (41) at (-0.5, 4.5) {};
		\node (42) at (-1.25, 4.75) {};
		\node (43) at (-0.75, 4.75) {};
		\node (44) at (-1.25, 4.25) {};
		\node (45) at (-0.75, 4.25) {};
		\node (46) at (0.5, 4.5) {};
		\node (47) at (1.5, 4.5) {};
		\node (48) at (0.75, 4.75) {};
		\node (49) at (1.25, 4.75) {};
		\node (50) at (0.75, 4.25) {};
		\node (51) at (1.25, 4.25) {};
		\node (52) at (2.5, 4.5) {};
		\node (53) at (3.5, 4.5) {};
		\node (54) at (2.75, 4.75) {};
		\node (55) at (3.25, 4.75) {};
		\node (56) at (2.75, 4.25) {};
		\node (57) at (3.25, 4.25) {};
		\node (58) at (4.5, 4.5) {};
		\node (59) at (5.5, 4.5) {};
		\node (60) at (4.75, 4.75) {};
		\node (61) at (5.25, 4.75) {};
		\node (62) at (4.75, 4.25) {};
		\node (63) at (5.25, 4.25) {};
		\node (64) at (0, 6) {$\bullet$};
		\node (65) at (0, 5) {$\bullet$};
		\node (66) at (0, 4) {$\bullet$};
		\node (67) at (0, 3) {$\bullet$};
		\node (68) at (4, 6) {$\bullet$};
		\node (69) at (4, 5) {$\bullet$};
		\node (70) at (4, 4) {$\bullet$};
		\node (71) at (4, 3) {$\bullet$};
		\node (72) at (-0.25, 1) {$\bullet$};
		\node (73) at (0.25, 1) {$\bullet$};
		\node (74) at (-0.25, -1) {$\bullet$};
		\node (75) at (0.25, -1) {$\bullet$};
		\node (76) at (3.75, 1) {$\bullet$};
		\node (77) at (4.25, 1) {$\bullet$};
		\node (78) at (3.75, -1) {$\bullet$};
		\node (79) at (4.25, -1) {$\bullet$};
		\node (80) at (0, 2.25) {};
		\node (81) at (0, 1.75) {};
		\node (82) at (4, 2.25) {};
		\node (83) at (4, 1.75) {};
		\draw [bend left=15] (4.center) to (0.center);
		\draw [bend left=60, looseness=1.50] (0.center) to (1.center);
		\draw [bend left=15] (1.center) to (5.center);
		\draw [bend left=15] (5.center) to (3.center);
		\draw [bend left=60, looseness=1.50] (3.center) to (2.center);
		\draw [bend left=15] (2.center) to (4.center);
		\draw [bend left=45] (10.center) to (7.center);
		\draw [bend left=45] (7.center) to (11.center);
		\draw [bend left=45] (11.center) to (9.center);
		\draw [bend left=45] (9.center) to (10.center);
		\draw [bend left=45] (12.center) to (6.center);
		\draw [bend left=45] (6.center) to (13.center);
		\draw [bend left=45] (13.center) to (8.center);
		\draw [bend left=45] (8.center) to (12.center);
		\draw [bend left=15] (18.center) to (14.center);
		\draw [bend left=60, looseness=1.50] (14.center) to (15.center);
		\draw [bend left=15] (15.center) to (19.center);
		\draw [bend left=15] (19.center) to (17.center);
		\draw [bend left=60, looseness=1.50] (17.center) to (16.center);
		\draw [bend left=15] (16.center) to (18.center);
		\draw [bend left=45] (24.center) to (21.center);
		\draw [bend left=45] (21.center) to (25.center);
		\draw [bend left=45] (25.center) to (23.center);
		\draw [bend left=45] (23.center) to (24.center);
		\draw [bend left=45] (26.center) to (20.center);
		\draw [bend left=45] (20.center) to (27.center);
		\draw [bend left=45] (27.center) to (22.center);
		\draw [bend left=45] (22.center) to (26.center);
		\draw [bend left=15, looseness=1.25, dashed] (28.center) to (30.center);
		\draw [bend left=15, looseness=0.75, dashed] (30.center) to (31.center);
		\draw [bend left=15, looseness=1.25, dashed] (31.center) to (29.center);
		\draw [bend left=15, looseness=1.25] (29.center) to (33.center);
		\draw [bend left=15, looseness=0.75] (33.center) to (32.center);
		\draw [bend left=15, looseness=1.25] (32.center) to (28.center);
		\draw (30.center) to (33.center);
		\draw (32.center) to (31.center);
		\draw [bend left=15, looseness=1.25, dashed] (34.center) to (36.center);
		\draw [bend left=15, looseness=0.75, dashed] (36.center) to (37.center);
		\draw [bend left=15, looseness=1.25, dashed] (37.center) to (35.center);
		\draw [bend left=15, looseness=1.25] (35.center) to (39.center);
		\draw [bend left=15, looseness=0.75] (39.center) to (38.center);
		\draw [bend left=15, looseness=1.25] (38.center) to (34.center);
		\draw (36.center) to (39.center);
		\draw (38.center) to (37.center);
		\draw [bend left=15, looseness=1.25, dashed] (40.center) to (42.center);
		\draw [bend left=15, looseness=0.75, dashed] (42.center) to (43.center);
		\draw [bend left=15, looseness=1.25, dashed] (43.center) to (41.center);
		\draw [bend left=15, looseness=1.25] (41.center) to (45.center);
		\draw [bend left=15, looseness=0.75] (45.center) to (44.center);
		\draw [bend left=15, looseness=1.25] (44.center) to (40.center);
		\draw (42.center) to (45.center);
		\draw (44.center) to (43.center);
		\draw [bend left=15, looseness=1.25, dashed] (46.center) to (48.center);
		\draw [bend left=15, looseness=0.75, dashed] (48.center) to (49.center);
		\draw [bend left=15, looseness=1.25, dashed] (49.center) to (47.center);
		\draw [bend left=15, looseness=1.25] (47.center) to (51.center);
		\draw [bend left=15, looseness=0.75] (51.center) to (50.center);
		\draw [bend left=15, looseness=1.25] (50.center) to (46.center);
		\draw (48.center) to (51.center);
		\draw (50.center) to (49.center);
		\draw [bend left=15, looseness=1.25, dashed] (52.center) to (54.center);
		\draw [bend left=15, looseness=0.75, dashed] (54.center) to (55.center);
		\draw [bend left=15, looseness=1.25, dashed] (55.center) to (53.center);
		\draw [bend left=15, looseness=1.25] (53.center) to (57.center);
		\draw [bend left=15, looseness=0.75] (57.center) to (56.center);
		\draw [bend left=15, looseness=1.25] (56.center) to (52.center);
		\draw [bend left=15, looseness=1.25, dashed] (58.center) to (60.center);
		\draw [bend left=15, looseness=0.75, dashed] (60.center) to (61.center);
		\draw [bend left=15, looseness=1.25, dashed] (61.center) to (59.center);
		\draw [bend left=15, looseness=1.25] (59.center) to (63.center);
		\draw [bend left=15, looseness=0.75] (63.center) to (62.center);
		\draw [bend left=15, looseness=1.25] (62.center) to (58.center);
		\draw [->] (80.center) to (81.center);
		\draw [->] (82.center) to (83.center);
		\draw [stealth-stealth,dotted] (58.center) to (53.center);
\end{tikzpicture}
\caption{Description of the two ramified Real covers of Example \ref{exampleHyperelliptic}. The involutions $\sigma_+$ and and $\sigma_-$ correspond respectively to lefth-hand side and right-hand side.}
\label{figureCoverElliptic}
\end{figure}

	\subsection{Extension to pairs $(G,\varepsilon)$}
	\label{subsectionExtensionG}

In this subsection, we explain how to generalize the results of the Subsection \ref{subsectionDefinitionSign} to any pair $(G,\varepsilon)$ where :\begin{itemize}
	\item $G$ is a finite group whose characters are real-valued and
	\item $\varepsilon : G \rightarrow \{ \pm 1 \}$ is a non-trivial group morphism.
\end{itemize} The condition on $G$ is required in order to use Corrolary \ref{corollaryVectorL}. In this setting, the objects that we count are isomorphism classes of principal $G$-bundles over the surfaces $X_\sigma^o$. Indeed, monodromy representation for principal $G$-bundles identifies the isomorphism class $[P]$ of $P$ to a morphism \begin{equation}
\rho_P : \pi_1(X_\sigma^o,p) \rightarrow G
\end{equation} well-defined up to conjugation by $G$. 

\begin{definition}[$(G,\varepsilon)$-Real Hurwitz numbers]
Let\begin{itemize}
	\item $g,n$ be non-negative integers,
	\item $(X,B,\sigma)$ a connected genus $g$ marked Real Riemann surface with $B = \{ b_1^\pm,\ldots,b_n^\pm \}$,
	\item $\bm{c} = (\bm{c}_1,\ldots,\bm{c}_n)$ a sequence of conjugacy classes of $G$ and
	\item $a \in H_1(X_\sigma^o)$ an admissible class. 
\end{itemize} The \textit{$(G,\varepsilon)$-Real Hurwitz number} $\mathbb{R} H_{g,G,\varepsilon}^\bullet(\bm{c})$ is the signed weighted-number of isomorphism classes of principal $G$-bundles $P \rightarrow X_\sigma^o$ whose monodromy around $b_i = \{ b_i^+,b_i^- \}$ belongs to the class $\bm{c}_i$. The weight of the isomorphism class $[P]$ is $1 / \# \mathrm{Aut}(P)$ and its sign is $\varepsilon \rho_P(a)$.
\end{definition} 

The numbers defined do not depend on the admissible class $a$. Indeed, one can use monodromy representation to express $\mathbb{R} H_{g,G,\varepsilon}^\bullet(\bm{c})$ as the coefficient of the identity in the product \begin{equation}
\frac{1}{\# G} \mathfrak{K}^h \mathfrak{L}^{k+1} \bm{c}_1 \ldots \bm{c}_n
\end{equation} using the same proof as Lemma \ref{lemmaRealOperatorProduct}. Moreover, one finds the extension of Theorem \ref{theoremRealExplicit} to $(G,\varepsilon)$ by performing the same computations.

\begin{theorem}
The $(G,\varepsilon)$-Real Hurwitz numbers are given explicitly by
\begin{equation}
\mathbb{R} H_{g,G,\varepsilon}^\bullet(\bm{c}) = \sum_{\rho^T = \rho} \left( SFS_{G,\varepsilon}(\rho) \frac{dim(\rho)}{\# G} \right)^{1-g} \prod_{i=1}^n f_{\bm{c}_i}(\rho).
\end{equation}
\label{theoremRealExplicitG}
\end{theorem}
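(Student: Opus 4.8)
The plan is to reduce the statement to the purely algebraic computation already carried out for Theorem~\ref{theoremRealExplicit}, now performed in $Z\mathbb{C}G$ rather than $Z\mathbb{C}\mathfrak{S}_d$. As explained in the paragraph preceding the theorem, the monodromy argument of Lemma~\ref{lemmaRealOperatorProduct} shows that $\mathbb{R}H_{g,G,\varepsilon}^\bullet(\bm{c})$ is the coefficient of the identity conjugacy class in
\begin{equation}
\frac{1}{\# G}\,\mathfrak{K}^h \mathfrak{L}^{k+1}\,\bm{c}_1 \cdots \bm{c}_n \in Z\mathbb{C}G,
\end{equation}
where $g = 2h + k$; independence of the admissible class follows from Corollary~\ref{corollaryVectorL} exactly as in the symmetric-group case. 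So the only task is to evaluate this coefficient.

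First I would expand every factor in the idempotent basis $(v_\rho)_{\rho \in Irr(G)}$. From Subsection~\ref{subsectionRepresentationTheory} one has $\mathfrak{K} = \sum_\rho (\# G / dim(\rho))^2 v_\rho$; from the proof of Corollary~\ref{corollaryVectorL} one has $\mathfrak{L} = \sum_\rho (\# G / dim(\rho))\,SFS_{G,\varepsilon}(\rho)\, v_\rho$; and by (\ref{equationBasis2}) each class expands as $\bm{c}_i = \sum_\rho f_{\bm{c}_i}(\rho)\, v_\rho$, with $f_{\bm{c}_i}(\rho)$ as in (\ref{equationFcRho}). Using the idempotence relations $v_\rho v_{\rho'} = \delta_{\rho,\rho'} v_\rho$, the product collapses to a single sum over $Irr(G)$:
\begin{equation}
\frac{1}{\# G}\,\mathfrak{K}^h \mathfrak{L}^{k+1}\,\bm{c}_1 \cdots \bm{c}_n = \frac{1}{\# G} \sum_{\rho} \left( \frac{\# G}{dim(\rho)} \right)^{2h+k+1} SFS_{G,\varepsilon}(\rho)^{k+1} \prod_{i=1}^n f_{\bm{c}_i}(\rho)\, v_\rho.
\end{equation}

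Next I would read off the coefficient of the identity. By (\ref{equationBasis1}) together with $\chi_\rho(e) = dim(\rho)$ (real, since the characters of $G$ are real-valued), the coefficient of the identity class in $v_\rho$ is $dim(\rho)^2 / \# G$. Substituting this and using $2h + k + 1 = g + 1$, the powers of $\# G$ and $dim(\rho)$ combine into $(dim(\rho)/\# G)^{1-g}$, so that
\begin{equation}
\mathbb{R}H_{g,G,\varepsilon}^\bullet(\bm{c}) = \sum_{\rho} \left( \frac{dim(\rho)}{\# G} \right)^{1-g} SFS_{G,\varepsilon}(\rho)^{k+1} \prod_{i=1}^n f_{\bm{c}_i}(\rho).
\end{equation}

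Finally I would fold the signed Frobenius--Schur factor into the base, which is where the hypothesis on $G$ is used. Since every character of $G$ is real-valued, each $\rho$ is real or quaternionic, so Lemma~\ref{lemmaSFS} gives $SFS_{G,\varepsilon}(\rho) \in \{1,0,-1\}$ and, moreover, $SFS_{G,\varepsilon}(\rho) = 0$ unless $\rho$ is symmetric. This kills every non-symmetric summand and restricts the sum to $\rho^T = \rho$. For such $\rho$ one has $SFS_{G,\varepsilon}(\rho) = \pm 1$, so $SFS_{G,\varepsilon}(\rho)^m$ depends only on the parity of $m$; since $k+1 \equiv 1-g \pmod 2$, I may replace the exponent $k+1$ by $1-g$ and absorb the sign into the base, which yields the claimed formula. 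The only genuinely delicate points are this parity bookkeeping and the appeal to real-valuedness to eliminate the non-symmetric terms; everything else is the computation proving Theorem~\ref{theoremRealExplicit} transcribed verbatim from $\mathfrak{S}_d$ to $G$, with $SFS_{G,\varepsilon}(\rho)$ left in place of its symmetric-group evaluation.
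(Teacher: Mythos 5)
Your proposal is correct and follows essentially the same route as the paper: the paper proves Theorem \ref{theoremRealExplicitG} by invoking the monodromy argument of Lemma \ref{lemmaRealOperatorProduct} to identify $\mathbb{R}H^\bullet_{g,G,\varepsilon}(\bm{c})$ with the coefficient of the identity in $\frac{1}{\# G}\mathfrak{K}^h\mathfrak{L}^{k+1}\bm{c}_1\cdots\bm{c}_n$ and then repeating the idempotent-basis computation of Theorem \ref{theoremRealExplicit}, which is exactly what you carry out. Your explicit parity check $k+1 \equiv 1-g \pmod 2$ and the use of Lemma \ref{lemmaSFS} (in place of Proposition \ref{propositionSFSsymmetric}, which is specific to $\mathfrak{S}_d$) to kill non-symmetric summands and ensure $SFS_{G,\varepsilon}(\rho)=\pm 1$ on symmetric ones are precisely the details the paper leaves implicit in the phrase ``by performing the same computations.''
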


For $(G,\varepsilon) = (\mathfrak{S}_d,\varepsilon)$, one recovers the degree $d$ disconnected signed Real Hurwitz numbers. In terms of principal $\mathfrak{S}_d$-bundles, the degree $d$ connected signed Real Hurwitz numbers correspond to the count of the bundles that do not admit a reduction to a subgroup $\mathfrak{S}_{d_1} \times \mathfrak{S}_{d_2}$ with $d_1 + d_2 = d$ and $d_1,d_2 \geq 1$.\\

In the case of doublet targets, we can make use of the \textit{puncture class} (Definition \ref{definitionPunctureClass}) to define the \textit{$(G,\varepsilon)$-Doublet Hurwitz numbers} from the $G$-Complex Hurwitz numbers introduced in Subsection \ref{subsectionClassiclaHurwitz}. 

\begin{definition}[$(G,\varepsilon)$-Doublet Hurwitz numbers]
Let $(X,B,\sigma)$ be the marked doublet of a genus $g$ Riemann surface, of associated partition $I \sqcup J = \{ 1,\ldots,n\}$ and puncture class $a$, and $\bm{c} = (\bm{c}_1,\ldots,\bm{c}_n)$ be conjugacy classes of $G$. The \textit{$(G,\varepsilon)$-Complex Hurwitz numbers} $H_{g,G,a}^\bullet(\bm{c})$ are the numbers \begin{equation}
H_{g,G,\varepsilon,a}^\bullet(\bm{c}) = \prod_{i \in I} \varepsilon(\bm{c}_i) H_{g,G}^\bullet(\bm{c}).
\end{equation}
\label{definitionGEpsilonComplexHurwitz}
\end{definition}

They are expressed in terms of the characters of $G$ using (\ref{equationClosedFormulaComplexG}). 

	\subsection{Signed Real Hurwitz numbers with completed cycles}
	\label{subsectionCompletedCyclesReal}

In \cite{articleOkounkovPandharipande}, Okounkov and Pandharipande relate the relative stationnary Gromov-Witten invariants \textit{with descendents} of Riemann surfaces to the Hurwitz numbers with \textit{completed cycles}, see Subsection \ref{subsectionCompletedCycles} for the construction of the completed cycles and (\ref{equationHurwitzCompletedCycles}) for the expression of the disconnected Hurwitz numbers with completed cycles.

Comparing Theorem \ref{theoremRealExplicit} with \cite[Remark 9.15]{articleGeorgievaIonel}, the signed Real Hurwitz numbers defined in Subsection \ref{subsectionDefinitionSign} coincide with the relative Real Gromov-Witten invariants of the target. In the present Subsection, we construct the \textit{signed Real Hurwitz numbers with completed cycles}. As it will be proved in a subsequent paper, they are related to the relative stationnary Real Gromov-Witten invariants with descendents of the target.\\

We adapt the geometric description of Hurwitz numbers with completed cycles presented in \cite{articleShadrinSpitzZvonkine} to the Real context. Let $(X,\sigma,B \sqcup C)$ be a connected marked Real Riemann surface with \begin{equation}
B = \{ b_1^\pm,\ldots,b_n^\pm \} \text{ and } C = \{ c_1^\pm, \ldots , c_m^\pm \}.
\end{equation} We consider a set $\bm{\lambda} = (\bm{\lambda}_1,\ldots,\bm{\lambda}_n)$ of partitions of an integer $d$ and a set $\bm{k} = (\bm{k}_1,\ldots,\bm{k}_m)$ of positive integers. In the present subsection, we are interested in the isomorphism classes of Real holomorphic maps \begin{equation}
f : (X',\sigma') \rightarrow (X,\sigma)
\end{equation} unramified over $X \setminus (B \sqcup C)$, whose ramification profile over $b_i^\pm$ is $\bm{\lambda}_i$, together with the following decorations : for each $j$, the choice of a pair of conjugated subsets $S_j^\pm \subseteq f^{-1}(c_j^\pm)$ which contain all the points at which $f$ is ramified (it might also contain some unramified points). It defines a partition $\bm{\nu}_j$ which corresponds to the ramification order at the points of $S_j^\pm$. Notice that the set of integers $\bm{k}$ is not involved in the description of these morphisms. The pair $(\bm{k}_j,\bm{\nu}_j)$ can be used to define the weight \begin{equation}
q_{\bm{k}_j,\bm{\nu}_j},
\end{equation} which is the coefficient of $\bm{\nu}_j$ in the expression of the completed cycle $\overline{(\bm{k}_j)}$, see (\ref{equationCompletedCycles}).

\begin{definition}[Disconnected signed Real Hurwitz numbers with completed cycles]
Let $(X,\sigma,B \sqcup C)$, $\bm{\lambda}$ and $\bm{k}$ be as above. Denote by $g$ the genus of $X$ and choose an admissible class $a$. The \textit{disconnected signed Real Hurwitz number with completed cycles} associated to this data is the number \begin{equation}
\mathbb{R}H^\bullet_{g,d}(\bm{\lambda};\bm{k}) = \sum_{[f;S_1^\pm,\ldots,S_m^\pm]} \frac{\varepsilon \rho_f(a)}{\# Aut(f;S_1^\pm,\ldots,S_m^\pm)} q_{\bm{k}_1,\bm{\nu}_1} \ldots q_{\bm{k}_m,\bm{\nu}_m}.
\end{equation} The sum is over the isomorphism classes of \textit{decorated} Real holomorphic maps $(f;S_1^\pm,\ldots,S_m^\pm)$ unramified over $X \setminus (B \sqcup C)$ with ramification profile $\bm{\lambda}_i$ around $b_i^\pm$. An isomorphism between decorated ramified Real covers is a an isomorphism of the underlying ramified Real covers which preserve the subsets $S_j^\pm$ set-wise. 
\end{definition}

\begin{proposition}
The disconnected signed Real Hurwitz numbers with completed cycles are given explicitly by \begin{equation}
\mathbb{R} H_{g,d}^\bullet(\bm{\lambda};\bm{k}) = \sum_{\mu^T = \mu} \left( (-1)^{\frac{d-r(\mu)}{2}}  \frac{dim(\mu)}{d!} \right)^{1-g} \prod_{i=1}^n f_{\bm{\lambda}_i}(\mu) \prod_{j=1}^m \frac{p^*_{\bm{k}_j}(\mu)}{\bm{k}_j}.
\end{equation}
\end{proposition}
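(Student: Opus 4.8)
The plan is to reduce the statement to a diagonal computation in the center $Z\mathbb{C}\mathfrak{S}_d$, running exactly parallel to the proof of Theorem \ref{theoremRealExplicit}, by showing that each completed point $c_j^\pm$ contributes a single factor which is diagonal in the idempotent basis. First I would repeat the monodromy argument of Lemma \ref{lemmaRealOperatorProduct} for the larger punctured surface obtained by removing $C$ as well: after passing to the quotient $X_\sigma^o$, a decorated Real cover is encoded by a tuple $\bm\alpha,\bm\beta,\bm\gamma,\bm\delta,\bm\delta'$ of permutations satisfying the relation (\ref{equationPresentation}) enlarged by loops $\delta'_j$ around the new punctures $c_j$, together with the choice of distinguished subsets $S_j^\pm$. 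The sign is still $\varepsilon\rho_f(a)$ and the weight is $1/d!$ times $\prod_j q_{\bm{k}_j,\bm{\nu}_j}$. This realises $\mathbb{R}H^\bullet_{g,d}(\bm\lambda;\bm k)$ as the coefficient of the identity in
\[
\frac{1}{d!}\,\mathfrak{K}^h\,\mathfrak{L}^{k+1}\,c_{\bm\lambda_1}\cdots c_{\bm\lambda_n}\,\mathfrak{P}_1\cdots\mathfrak{P}_m,
\]
where $g=2h+k$ and $\mathfrak{P}_j\in Z\mathbb{C}\mathfrak{S}_d$ packages, for the $j$-th completed point, the sum over the monodromy class at $c_j$ and over the admissible distinguished subsets weighted by $q_{\bm{k}_j,\bm\nu_j}$.

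The combinatorial core is to identify $\mathfrak{P}_j$. For a fixed monodromy permutation at $c_j$ of cycle type $\tau$, a distinguished subset must contain the non-fixed support of the permutation and may contain any of its $m_1(\tau)$ fixed points; choosing $r$ of them produces the profile $\bm\nu=(\tau^{\ge2},1^r)$, and there are $\binom{m_1(\tau)}{r}$ such subsets. Summing $q_{\bm{k}_j,\bm\nu}$ over these choices and comparing with the extension formula (\ref{equationfExtended}), in which the binomial coefficient $\binom{m_1(\nu)+d-|\nu|}{m_1(\nu)}$ plays exactly the role of this count, gives
\[
\mathfrak{P}_j=\sum_{|\mu|=d}\left(\sum_{\nu}q_{\bm{k}_j,\nu}\,f_\nu(\mu)\right)v_\mu=\sum_{|\mu|=d}\frac{p^*_{\bm{k}_j}(\mu)}{\bm{k}_j}\,v_\mu,
\]
the last equality being (\ref{equationCompletedCycles}). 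Thus $\mathfrak{P}_j$ is diagonal in the idempotent basis with eigenvalue $p^*_{\bm{k}_j}(\mu)/\bm{k}_j$ on $v_\mu$, and the whole product above is diagonal there.

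It remains to handle the dependence on the admissible class, which may now involve the new loops $\delta'_j$. Here I would use that $q_{\bm{k}_j,\nu}=0$ unless $\varepsilon(\nu)=(-1)^{\bm{k}_j-1}$, so that $\mathfrak{P}_j$ is parity-homogeneous of sign $(-1)^{\bm{k}_j-1}$; changing the $\delta'_j$-coefficient of $a$ multiplies the count by $(-1)^{\bm{k}_j-1}$. When $\bm{k}_j$ is even, $\mathfrak{P}_j$ is odd, so $\mathfrak{L}\,\mathfrak{P}_j=0$ by Corollary \ref{corollaryVectorL} (and there is always at least one $\mathfrak{L}$ factor, since the exponent $k+1\ge1$); hence both sides vanish, consistently with the fact that $p^*_{\bm{k}_j}(\mu)=0$ for symmetric $\mu$ when $\bm{k}_j$ is even, which follows from $p^*_k(\mu^T)=(-1)^{k-1}p^*_k(\mu)$. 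When $\bm{k}_j$ is odd the extra sign is $+1$. Either way the count is independent of $a$.

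Finally I would run the idempotent-basis computation verbatim as in Theorem \ref{theoremRealExplicit}: expanding $\mathfrak{K}$, $\mathfrak{L}$, the $c_{\bm\lambda_i}$ and the $\mathfrak{P}_j$ in the basis $(v_\mu)$, extracting the coefficient of the identity, and substituting $SFS(\mu)=(-1)^{(d-r(\mu))/2}$ for symmetric $\mu$ from Proposition \ref{propositionSFSsymmetric}. The only new feature compared with Theorem \ref{theoremRealExplicit} is the extra diagonal factor $\prod_j p^*_{\bm{k}_j}(\mu)/\bm{k}_j$, which passes straight through and produces the stated formula. I expect the main obstacle to be the first step: setting up the monodromy dictionary for \emph{decorated} Real covers, with the involution acting on the conjugate distinguished subsets $S_j^\pm$, and verifying that the number of admissible subsets of a given profile is governed precisely by the binomial coefficient of (\ref{equationfExtended}). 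Once this translation is in place, the remainder is the diagonal computation above.
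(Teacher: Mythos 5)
Your proposal is correct and follows essentially the same route as the paper: both reduce the decorated count to an undecorated one via the binomial count of the distinguished subsets $S_j^\pm$, identify that coefficient with the one appearing in (\ref{equationfExtended}) so that (\ref{equationCompletedCycles}) produces the diagonal factors $p^*_{\bm{k}_j}(\mu)/\bm{k}_j$, and then run the idempotent-basis computation of Theorem \ref{theoremRealExplicit}. Your additional verification that the count is independent of the admissible class at the new punctures --- using the parity constraint that $q_{\bm{k}_j,\nu}=0$ unless $\varepsilon(\nu)=(-1)^{\bm{k}_j-1}$, together with Corollary \ref{corollaryVectorL} --- is a point the paper leaves implicit, and you handle it correctly.
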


\begin{proof}
We start with the definition of $\mathbb{R} H_{g,d}^\bullet(\bm{\lambda};\bm{k})$. It can be written as a sum over isomorphism classes of ramified Real covers (without the decorations) by replacing $1 / \# Aut(f;S_1^\pm,\ldots,S_m^\pm)$ by \begin{equation}
\frac{1}{\# Aut(f)} \prod_{j=1}^m \binom{m_1(\bm{\nu}_j) + d - | \bm{\nu}_j |}{d - \vert \bm{\nu}_j \vert}.
\end{equation} The combinatorial coefficient account for the different ways to choose the subsets $S_j^\pm$ given the partitions $(\bm{\nu}_j)$. Comparing with (\ref{equationfExtended}) and (\ref{equationCompletedCycles}), $\mathbb{R} H_{g,d}^\bullet(\bm{\lambda};\bm{k})$ can be expressed as \begin{equation}
\sum_{\mu^T = \mu} \left( (-1)^{\frac{d-r(\mu)}{2}}  \frac{dim(\mu)}{d!} \right)^{1-g} \prod_{i=1}^n f_{\bm{\lambda}_i}(\mu) \prod_{j=1}^m \underbrace{\sum_\nu q_{\bm{k}_j,\nu}f_\nu(\mu)}_{\frac{p^*_{\bm{k}_j}(\mu)}{\bm{k}_j}}.
\end{equation} The proposition is proved.
\end{proof}

The decorations $S_1^\pm,\ldots,S_m^\pm$ can be understood geometrically as follows, see \cite{articleShadrinSpitzZvonkine} in the context of double Hurwitz numbers. For each $j$, a doublet is attached to the source $(X',\sigma')$ at the points $S_j^\pm$ in such a way that the involution $\sigma'$ extends to the singular Riemann surface obtained. The ramified Real cover is also defined on this singular Real Riemann surface by contracting a component on the doublet onto $c^+_j$ and the other onto $c^-_j$. One can think of the coefficients $q_{k,\nu}$ as some kind of intersection number on the moduli space of the contracted doublets.

The notion of connectedness and doublets has to be changed to reflect this geometrical point of view on the decorations.

\begin{definition}[Connected signed Real Hurwitz numbers with completed cycles]
Let $(X,\sigma,B \sqcup C)$, $\bm{\lambda}$ and $\bm{k}$ be as above. Denote by $g$ the genus of $X$ and choose an admissible class $a$. The \textit{connected signed Real Hurwitz number with completed cycles} associated to this data is the number \begin{equation}
\mathbb{R}H_{g,d}(\bm{\lambda};\bm{k}) = \sum_{[f;S_1^\pm,\ldots,S_m^\pm]} \frac{\varepsilon \rho_f(a)}{\# Aut(f;S_1^\pm,\ldots,S_m^\pm)} q_{\bm{k}_1,\bm{\nu}_1} \ldots q_{\bm{k}_m,\bm{\nu}_m}.
\end{equation} The sum is over the isomorphism classes of \textit{decorated} Real holomorphic maps $(f;S_1^\pm,\ldots,S_m^\pm)$, unramified over $X \setminus (B \sqcup C)$ with ramification profile $\bm{\lambda}_i$ around $b_i^\pm$, which are such that the singular Riemann surface obtained by indentifying the points of $S_j^+$ together, similarly for $S_j^-$, for all $j$ is either a singular connected Real Riemann surface or a singular doublet.
\label{definitionConnectedRealCompletedCycles}
\end{definition}

The generating series introduced in the proof of Lemma \ref{lemmaRealOperatorProduct} extend in a standard way to allow completed cycles, see for instance \cite{bookCavalieriMiles}. These generating series still satisfy the relation (\ref{equationConnectedDisconnected}).

\begin{proposition}
The contribution of doublet covers to $\mathbb{R}H_{g, 2d'}(\bm{\lambda};\bm{k})$ is \begin{equation}
\frac{(-1)^{d'(g-1)}}{2} \sum_{\bm{\lambda} = \bm{\lambda^+} \sqcup \bm{\lambda^-}} \prod_{i=1}^n \varepsilon \left( \bm{\lambda}_i^{s(i)} \right) \prod_{j=1}^m \left( 1 + (-1)^{\bm{k}_j - 1} \right)  H_{g,d}(\bm{\lambda^+}, \bm{\lambda^-} ; \bm{k})
\end{equation} for any function \begin{equation}
s : \{ 1,\ldots,n\} \rightarrow \{ \pm \}.
\end{equation} It vanishes if at least one of the partitions $\bm{\lambda}_i$ is odd or one of the integers $\bm{k}_j$ is even. 
\end{proposition}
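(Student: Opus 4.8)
The plan is to run the argument of Corollary \ref{corollaryContributionDoublets} in the presence of the decorations $S_1^\pm,\ldots,S_m^\pm$, the only genuinely new feature being the combinatorics governing which half of the source each decoration lives in. Concretely, I would fix a polygonal description of $(X,\sigma)$ with $g+1$ cross-caps, take the admissible class $a = \gamma_0 + \ldots + \gamma_g$, and start from a decorated Real cover $f$ whose associated singular surface is a doublet. Choosing a connected component $X'$ of the source yields a degree $d' = d/2$ complex cover $f' = f|_{X'} : X' \to X$ carrying the ramification profiles $\bm{\lambda^+},\bm{\lambda^-}$ over $B^+,B^-$ exactly as in Corollary \ref{corollaryContributionDoublets}, together with decorations inherited from the $S_j^\pm$. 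As there, the reconstruction of $f$ from either $f'$ or $\sigma\circ f'$ is two-to-one onto isomorphism classes, which accounts for the global factor $1/2$.

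Next I would analyze the decorations. The singular-doublet condition of Definition \ref{definitionConnectedRealCompletedCycles} forces each $S_j^+$ to lie entirely in one component of the source: if $S_j^+$ met both $X'$ and $\overline{X'}$, identifying its points would glue the two halves and the quotient could not be a doublet. Hence, for each $j$, the cover $f'$ is ramified in the decorated sense over exactly one of $c_j^+,c_j^-$ --- say over $c_j^{\epsilon_j}$ with profile $\bm{\nu}_j$ --- and is unramified over the other point of the conjugate pair; the attached weight is $q_{\bm{k}_j,\bm{\nu}_j}$. Because a completed-cycle Hurwitz number does not depend on the positions of its insertion points, the decorated covers with insertion over $c_j^{\epsilon_j}$ are in profile-preserving bijection with those over $c_j^{-\epsilon_j}$, and both feed, after summing over the $\bm{\nu}_j$ weighted by $q_{\bm{k}_j,\bm{\nu}_j}$ and over the splitting $\bm{\lambda}_i = \bm{\lambda^+}_i \sqcup \bm{\lambda^-}_i$, into the same complex number $H_{g,d}(\bm{\lambda^+},\bm{\lambda^-};\bm{k})$.

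The sign is then obtained by extending Proposition \ref{propositionSignDoubletCover} to the punctures over $C$. The relation in $\pi_1(X^o)$ equating the product of the cross-cap loops with the product of the puncture loops of the chosen half now involves the punctures $c_j^{s'(j)}$ as well, giving
\begin{equation}
\varepsilon\rho_f(a) = (-1)^{d'(g+1)} \prod_{i=1}^n \varepsilon\left(\bm{\lambda}_i^{s(i)}\right) \prod_{j=1}^m \varepsilon\left(\bm{\kappa}_j\right),
\end{equation}
where $\bm{\kappa}_j$ is the cycle type of the monodromy of $f'$ around the puncture $c_j^{s'(j)}$ of the chosen half. By the previous paragraph $\bm{\kappa}_j$ has sign $\varepsilon(\bm{\nu}_j)$ when $\epsilon_j = s'(j)$ and is trivial otherwise, so the two members of each bijection above carry signs $\varepsilon(\bm{\nu}_j)$ and $1$; summing them replaces the $j$-th weight by $q_{\bm{k}_j,\bm{\nu}_j}\,(1 + \varepsilon(\bm{\nu}_j))$. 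The decisive point is that $q_{\bm{k}_j,\bm{\nu}_j}$ vanishes unless $\varepsilon(\bm{\nu}_j) = (-1)^{\bm{k}_j-1}$ (see Subsection \ref{subsectionCompletedCycles}); hence on the support of $q_{\bm{k}_j,\cdot}$ the factor $1+\varepsilon(\bm{\nu}_j)$ is the profile-independent constant $1+(-1)^{\bm{k}_j-1}$, which therefore pulls out of the $\bm{\nu}_j$-summation reconstituting $p^*_{\bm{k}_j}(\mu)/\bm{k}_j$ and leaves $H_{g,d}(\bm{\lambda^+},\bm{\lambda^-};\bm{k})$ intact. This produces the claimed product $\prod_j(1+(-1)^{\bm{k}_j-1})$, and in particular the vanishing when some $\bm{k}_j$ is even.

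Finally, the prefactor $(-1)^{d'(g-1)} = (-1)^{d'(g+1)}$ and the factor $1/2$ come out precisely as in Corollary \ref{corollaryContributionDoublets}, using the identity $\varepsilon(\mu_i) = (-1)^{d'-l(\mu_i)}$ from Lemma \ref{lemmaSignCrosscapDoublet} and the involution $f'\leftrightarrow \sigma\circ f'$, along which the decorations are transported without affecting the bookkeeping; the formula holds for any $s$, and its independence of $s$ (equivalently, the vanishing when some $\bm{\lambda}_i$ is odd) follows from the same pairing $\bm{\lambda^+}_i \leftrightarrow \bm{\lambda^-}_i$ as in that corollary, which is compatible with the $S_j^\pm$. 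I expect the main obstacle to be the careful bookkeeping of the third paragraph: one must verify that the two distributions $\epsilon_j = \pm$ really yield the same complex count via location-independence, and that their signs differ by exactly $\varepsilon(\bm{\nu}_j)$, so that the parity constraint on $q_{\bm{k}_j,\bm{\nu}_j}$ is precisely what upgrades the naive doubling to the factor $1+(-1)^{\bm{k}_j-1}$.
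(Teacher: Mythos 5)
Your overall strategy is the same as the paper's (reduce to a complex cover of a chosen half, track which side each decoration lies on via the sign of the puncture monodromy, and use the parity constraint on $q_{\bm{k}_j,\bm{\nu}_j}$ to produce the factor $1+(-1)^{\bm{k}_j-1}$), and your third paragraph correctly spells out the bookkeeping that the paper leaves as ``straightforward to adapt''. However, there is a genuine gap at the very first step: you assume that the source of a doublet-contributing cover is a single doublet with two components $X'$ and $\overline{X'}$, and you claim that the singular-doublet condition forces each $S_j^+$ to lie entirely in one connected component of the source. Neither holds in general. By Definition \ref{definitionConnectedRealCompletedCycles}, the doublet contribution consists of decorated covers whose source $(D,\sigma')$ may be a disjoint union of several doublets $(D_l,\sigma_l)$, glued into a singular doublet precisely by the identifications along the $S_j^\pm$; in that situation a single $S_j^+$ typically meets several connected components of $D$ (this is exactly how distinct doublets become joined), and no single connected component of $D$ yields a degree $d'$ cover, so your map $f' = f|_{X'}$ is not even defined.

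What is needed --- and what the paper supplies --- is an argument producing a consistent choice of ``half'' $X' = \bigsqcup_l X'_l$, one component from each doublet $D_l$, such that for every $j$ either $S_j^+ \subseteq X'$ or $S_j^- \subseteq X'$. The paper does this with a graph argument: let $\Gamma$ be the graph whose vertices are the connected components of $D$, with an edge between two components whenever they meet a common $S_j^\epsilon$; since the identified surface is a singular doublet, $\Gamma$ has exactly two connected components, exchanged by the involution, and either one determines the required half. Once this is in place, your restriction to $X'$, the two-to-one correspondence giving the factor $1/2$, the sign formula extending Proposition \ref{propositionSignDoubletCover}, the extraction of $\prod_j \left( 1+(-1)^{\bm{k}_j - 1} \right)$ from the parity constraint on the coefficients $q_{\bm{k}_j,\bm{\nu}_j}$, and the two vanishing statements all go through exactly as you wrote them.
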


\begin{proof}
Let $f : (D,\sigma') \rightarrow (X,\sigma)$ be a ramified Real cover accounting for the doublet contribution of $\mathbb{R}H_{g,2d'}(\bm{\lambda};\bm{k})$ with the decoration $S_1^\pm,\ldots,S_m^\pm$. It has the ramification profile $\bm{\lambda}$ at $B$ and $\bm{\nu} = ((\bm{\nu}_1,1,\ldots,1), \ldots, (\bm{\nu}_m,1,\ldots,1))$ at $C$.

By Definition \ref{definitionConnectedRealCompletedCycles}, the fact that $(f; S_1^\pm,\ldots,S_m^\pm)$ is part of the doublet contribution means that after identiying the points in $S_j^+$ together, those in $S_j^-$ together, for all $j$ seperately, we obtain a doublet. Thus, $(D,\sigma')$ must be a union of doublets $(D_l,\sigma_l)$. Moreover, we claim that we can select a component $X'_l$ of $D_l$ such that for any $j$ \begin{equation}
S_j^+ \subseteq \bigsqcup_l X'_l \text{ or } S_j^- \subseteq \bigsqcup_l X'_l.
\end{equation} Indeed, consider the graph $\Gamma$ whose vertices are the connected components of $D$ and such that two vertices $v,v'$ are related by an edge if there exist $j$ and $\epsilon\in \{ \pm \}$ such that $v$ and $v'$ intersect $S_j^\epsilon$. By definition, $\Gamma$ has exactly two connected components. Moreover, it carries an involution which exchanges its two connected components. Selecting the components of $D$ which belong to a given connected component of $\Gamma$ gives the required splitting.

As in Subsection \ref{subsectionDoubletContribution}, restricting $f$ to $X' = \bigsqcup X'_l$ gives rise to a splitting $\bm{\nu} = \bm{\nu^+} \sqcup \bm{\nu^-} $. It follows from the choice of $X'$ that $\bm{\nu^+}_j \sqcup \bm{\nu^-}_j$ has the shape \begin{equation}
(\bm{\nu}_j,1,\ldots,1) \sqcup (1,\ldots,1) \text{ or } (1,\ldots,1) \sqcup (\bm{\nu}_j,1,\ldots,1)
\end{equation} for all $j$. Since the coefficient $q_{\bm{\nu}_j,\bm{k}_j}$ vanishes unless $\varepsilon(\bm{\nu}_j) = (-1)^{\bm{k}_j-1}$, it is straightforward to adapt the proof of Corollary \ref{corollaryContributionDoublets} to obtain \begin{equation}
\frac{(-1)^{(g-1)d'}}{2}\sum_{\bm{\lambda^+} \sqcup \bm{\lambda^-}} \prod_{i=1}^n \varepsilon \left( \bm{\lambda}^{s(i)}_i \right) \prod_{j=1}^m \left( 1 + (-1)^{\bm{k}_j - 1} \right) H_{g,d'}(\bm{\lambda^+}, \bm{\lambda^-} ; \bm{k} )
\end{equation} as required.
\end{proof}

\section{Degeneration formulas}
\label{sectionDegenerationFormulas}

In this section, we fix a pair $(G,\varepsilon)$ as in Subsection \ref{subsectionExtensionG} and omit $(G,\varepsilon)$ from the notations. We degenerate the targets by collapsing simultaneously a pair of conjugate circles in the target curve. Such a \textit{degeneration process} is introduced in Subsection \ref{subsectionDegenerations}. In Subsection \ref{subsectionDegenerationFormula}, we relate the $(G,\varepsilon)$-Real Hurwitz numbers corresponding to a connected Real target to the $(G,\varepsilon)$-Real and Complex Hurwitz numbers of the degenerated target. It is a direct consequence of the orthogonality of the characters. In Subsection \ref{subsectionDegenerationSigns}, we consider the more subtle problem of the relation between the sign of a given covering and the sign of the degenerated covering. It enables to relate our signs with those provided in \cite{articleGeorgievaIonel} using the Real Gromov-Witten theory.

	\subsection{Degenerations}
	\label{subsectionDegenerations}

Let $(X,\sigma,B)$ be a marked Real Riemann surface. Given a pair of disjoint embedded circles in $X \setminus B$ exchanged by $\sigma$, we construct another marked Real Riemann surface $(Y,\sigma,C)$ as follows : \begin{enumerate}[label = (\arabic*)]
	\item Collapse the two circles into two nodes, conjugated by $\sigma$.
	\item Resolve each of the two nodes. Label the two marked points obtained by resolving the same node by the same label $\pm$. The involution $\sigma$ extends uniquely to the four marked points obtained. 
\end{enumerate} At the level of the complex structures, the degeneration process happens when one varries the complex structure of $(X,\sigma)$ in such a way that the length of the embbeded circles tend to $0$.\\

When $(X,\sigma)$ is connected of genus $g$, the degeneration $(Y,\sigma,C)$ can have exactly four different topological types : \begin{enumerate}[label = (\alph*)]
	\item $(Y,\sigma)$ is the union of two connected Real Riemann surfaces of genus $h,h'$ such that $g = h + h' + 1$.
	\item $(Y,\sigma)$ is the union of a connected Real Riemann surface of genus $h$ and the doublet of a genus $h'$ Riemann surface such that $g = h + 2h'$.
	\item $(Y,\sigma)$ is a connected Real Riemann surface of genus $h$ such that $g = h +2$.
	\item $(Y,\sigma)$ is the doublet of a Riemann surface of genus $h$ such that $g = 2h + 1$.
\end{enumerate} The cases (a),(b),(c),(d) are depicted in Figure \ref{figureDegene}.

\begin{lemma}
If the degeneration is of type (d), then each connected component of $Y$ carries one of the additionnal marked points labelled $+$ and one of those labelled $-$.  
\label{lemmaDegeneCaseD}
\end{lemma}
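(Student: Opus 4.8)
The plan is to exploit the fact that the normalization $Y$ together with its two conjugate nodes reconstructs the (connected) singular curve, and to read off the marked-point distribution from the way the nodes re-glue the $\pm$-points. First I would fix notation: the degeneration of step (1)--(2) produces a nodal curve $\widehat{X}$, obtained from $X$ by collapsing the two conjugate circles to two nodes $p$ and $\sigma(p)$, and $Y$ is the normalization of $\widehat{X}$, so that $\widehat{X}$ is recovered from $Y$ by re-identifying the two marked points lying over each node. Since $\widehat{X}$ is a continuous quotient of the connected surface $X$ (and smoothing the nodes returns $X$), the curve $\widehat{X}$ is connected. I would also recall the labelling convention of step (2): the two marked points produced by resolving one of the nodes both carry the label $+$ and those from the conjugate node both carry $-$, with $c_i^- = \sigma(c_i^+)$. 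Concretely, writing $C = \{c_1^\pm, c_2^\pm\}$, the node $p$ re-glues $c_1^+$ to $c_2^+$ and the node $\sigma(p)$ re-glues $c_1^-$ to $c_2^-$.

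In case (d) we have $Y = W \sqcup \overline{W}$ with $\sigma$ exchanging the two genus-$h$ components, so $\sigma$ has no fixed point and carries marked points to marked points. Because $\sigma$ maps $W$ bijectively onto $\overline{W}$ and permutes the four marked points, the two components carry equally many of them; hence each of $W$ and $\overline{W}$ carries exactly two marked points. The heart of the argument is then a short connectedness contradiction: I would assume toward a contradiction that one component, say $W$, carries both $+$-points $c_1^+$ and $c_2^+$. Then $\overline{W} = \sigma(W)$ carries $\sigma(c_1^+) = c_1^-$ and $\sigma(c_2^+) = c_2^-$, i.e. both $-$-points. Re-gluing $Y$ to recover $\widehat{X}$ identifies $c_1^+ \sim c_2^+$ entirely inside $W$ and $c_1^- \sim c_2^-$ entirely inside $\overline{W}$, so neither node connects the two components and the quotient $\widehat{X}$ is disconnected, contradicting the connectedness of $\widehat{X}$ (equivalently of $X$).

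It follows that $c_1^+$ and $c_2^+$ lie on different components, so each of $W$ and $\overline{W}$ carries exactly one $+$-point; since each component carries exactly two marked points, its remaining point must then be a $-$-point, which is the assertion of the lemma. (One can also see this directly: if $c_1^+ \in W$ then $c_1^- = \sigma(c_1^+) \in \overline{W}$ and $c_2^+ \in \overline{W}$, whence $c_2^- \in W$, so $W$ carries $c_1^+, c_2^-$ and $\overline{W}$ carries $c_2^+, c_1^-$.) The step I expect to require the most care is not the counting but pinning down the combinatorics of the labelling precisely --- namely verifying that one node glues the two $+$-points to each other and the conjugate node the two $-$-points, so that the ``both $+$-points on one component'' configuration is exactly the disconnecting one; once that bookkeeping is in place, the connectedness obstruction makes the conclusion immediate.
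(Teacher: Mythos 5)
Your proof is correct and follows essentially the same route as the paper: both argue by contradiction that if one component of $Y = W \sqcup \overline{W}$ carried both $+$-points (hence the other both $-$-points), then re-gluing the $+$-points together and the $-$-points together would produce a disconnected space, contradicting the connectedness of $X$. Your write-up merely adds two details the paper leaves implicit --- the $\sigma$-symmetry argument showing each component carries exactly two marked points, and the explicit bookkeeping of which points each node identifies --- but the key idea is identical.
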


\begin{proof}
Each connected component carries two additionnal marked points. Suppose that one component carries the two marked points labelled $+$, and the other the two marked points labelled $-$. The original Riemann surface must be homeomorphic to the space obtained from $Y$ by gluing the positive marked points together, smoothing the node, and proceeding similarly for the negative marked points. The later space has two connected components, while $X$ is connected. This is a contradiction.
\end{proof}

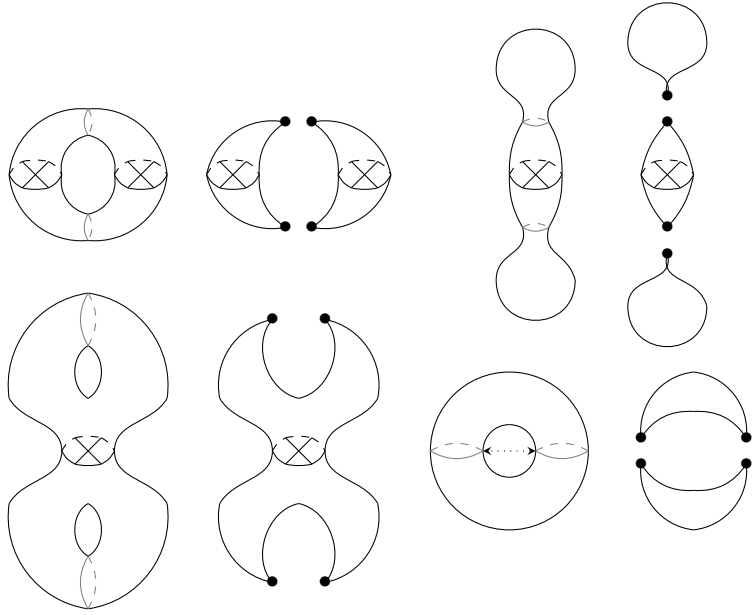
\begin{figure}[h!]
\centering
\begin{tikzpicture}[scale = 0.7]
		\node (4) at (-1.25, 1) {};
		\node (5) at (-0.75, 1) {};
		\node (6) at (-1.25, -1) {};
		\node (7) at (-0.75, -1) {};
		\node (8) at (-1.75, 2) {};
		\node (9) at (-0.25, 2) {};
		\node (10) at (-1.75, -2) {};
		\node (11) at (-0.25, -2) {};
		\node (24) at (-1.5, 0) {};
		\node (25) at (-0.5, 0) {};
		\node (26) at (1.5, 1) {};
		\node (27) at (1.5, 1) {};
		\node (28) at (1.5, -1) {};
		\node (29) at (1.5, -1) {};
		\node (34) at (1, 0) {};
		\node (35) at (2, 0) {};
		\node (36) at (1.5, 1.5) {};
		\node (37) at (1.5, 1.5) {};
		\node (38) at (0.75, 2.5) {};
		\node (39) at (2.25, 2.5) {};
		\node (40) at (1.5, -1.5) {};
		\node (41) at (1.5, -1.5) {};
		\node (42) at (0.75, -2.5) {};
		\node (43) at (2.25, -2.5) {};
		\node (44) at (-9.5, 1.25) {};
		\node (45) at (-9.5, 0.75) {};
		\node (46) at (-9.5, -1.25) {};
		\node (47) at (-9.5, -0.75) {};
		\node (48) at (-11, 0) {};
		\node (49) at (-10, 0) {};
		\node (50) at (-9, 0) {};
		\node (51) at (-8, 0) {};
		\node (52) at (-9.5, 1.25) {};
		\node (53) at (-9.5, 0.75) {};
		\node (54) at (-9.5, -0.75) {};
		\node (55) at (-9.5, -1.25) {};
		\node (56) at (-5.75, 1) {};
		\node (57) at (-5.75, 1) {};
		\node (58) at (-5.75, -1) {};
		\node (59) at (-5.75, -1) {};
		\node (60) at (-7.25, 0) {};
		\node (61) at (-6.25, 0) {};
		\node (62) at (-4.75, 0) {};
		\node (63) at (-3.75, 0) {};
		\node (64) at (-5.25, 1) {};
		\node (65) at (-5.25, 1) {};
		\node (66) at (-5.25, -1) {};
		\node (67) at (-5.25, -1) {};
		\node (68) at (-10, -5.25) {};
		\node (69) at (-9, -5.25) {};
		\node (70) at (-11, -4.25) {};
		\node (71) at (-8, -4.25) {};
		\node (72) at (-9.5, -4.25) {};
		\node (73) at (-9.5, -3.25) {};
		\node (74) at (-9.5, -3.25) {};
		\node (75) at (-9.5, -2.25) {};
		\node (76) at (-9.5, -2.25) {};
		\node (77) at (-9.5, -6.25) {};
		\node (78) at (-9.5, -7.25) {};
		\node (79) at (-9.5, -7.25) {};
		\node (80) at (-9.5, -8.25) {};
		\node (81) at (-9.5, -8.25) {};
		\node (82) at (-11, -6.25) {};
		\node (83) at (-8, -6.25) {};
		\node (84) at (-6, -5.25) {};
		\node (85) at (-5, -5.25) {};
		\node (86) at (-7, -4.25) {};
		\node (87) at (-4, -4.25) {};
		\node (88) at (-5.5, -4.25) {};
		\node (89) at (-6, -2.75) {};
		\node (90) at (-5, -2.75) {};
		\node (91) at (-6, -2.75) {};
		\node (92) at (-5, -2.75) {};
		\node (93) at (-5.5, -6.25) {};
		\node (94) at (-6, -7.75) {};
		\node (95) at (-5, -7.75) {};
		\node (96) at (-6, -7.75) {};
		\node (97) at (-5, -7.75) {};
		\node (98) at (-7, -6.25) {};
		\node (99) at (-4, -6.25) {};
		\node (100) at (-1.5, -3.75) {};
		\node (101) at (-1.5, -4.75) {};
		\node (102) at (-1.5, -6.75) {};
		\node (103) at (-1.5, -5.75) {};
		\node (104) at (-3, -5.25) {};
		\node (105) at (-2, -5.25) {};
		\node (106) at (-1, -5.25) {};
		\node (107) at (0, -5.25) {};
		\node (108) at (-1.5, -3.75) {};
		\node (109) at (-1.5, -4.75) {};
		\node (110) at (-1.5, -5.75) {};
		\node (111) at (-1.5, -6.75) {};
		\node (124) at (2, -3.75) {};
		\node (125) at (2, -4.5) {};
		\node (126) at (1, -5) {};
		\node (127) at (1, -5) {};
		\node (128) at (3, -5) {};
		\node (129) at (3, -5) {};
		\node (130) at (2, -3.75) {};
		\node (131) at (2, -4.5) {};
		\node (132) at (2, -6.75) {};
		\node (133) at (2, -6) {};
		\node (134) at (1, -5.5) {};
		\node (135) at (1, -5.5) {};
		\node (136) at (3, -5.5) {};
		\node (137) at (3, -5.5) {};
		\node (138) at (2, -6) {};
		\node (139) at (2, -6.75) {};
		\node (140) at (-10, -5.25) {};
		\node (141) at (-9, -5.25) {};
		\node (142) at (-9.75, -5) {};
		\node (143) at (-9.25, -5) {};
		\node (144) at (-9.75, -5.5) {};
		\node (145) at (-9.25, -5.5) {};
		\node (146) at (-6, -5.25) {};
		\node (147) at (-5, -5.25) {};
		\node (148) at (-5.75, -5) {};
		\node (149) at (-5.25, -5) {};
		\node (150) at (-5.75, -5.5) {};
		\node (151) at (-5.25, -5.5) {};
		\node (152) at (-1.5, 0) {};
		\node (153) at (-0.5, 0) {};
		\node (154) at (-1.25, 0.25) {};
		\node (155) at (-0.75, 0.25) {};
		\node (156) at (-1.25, -0.25) {};
		\node (157) at (-0.75, -0.25) {};
		\node (158) at (1, 0) {};
		\node (159) at (2, 0) {};
		\node (160) at (1.25, 0.25) {};
		\node (161) at (1.75, 0.25) {};
		\node (162) at (1.25, -0.25) {};
		\node (163) at (1.75, -0.25) {};
		\node (164) at (-11, 0) {};
		\node (165) at (-10, 0) {};
		\node (166) at (-10.75, 0.25) {};
		\node (167) at (-10.25, 0.25) {};
		\node (168) at (-10.75, -0.25) {};
		\node (169) at (-10.25, -0.25) {};
		\node (170) at (-9, 0) {};
		\node (171) at (-8, 0) {};
		\node (172) at (-8.75, 0.25) {};
		\node (173) at (-8.25, 0.25) {};
		\node (174) at (-8.75, -0.25) {};
		\node (175) at (-8.25, -0.25) {};
		\node (176) at (-7.25, 0) {};
		\node (177) at (-6.25, 0) {};
		\node (178) at (-7, 0.25) {};
		\node (179) at (-6.5, 0.25) {};
		\node (180) at (-7, -0.25) {};
		\node (181) at (-6.5, -0.25) {};
		\node (182) at (-4.75, 0) {};
		\node (183) at (-3.75, 0) {};
		\node (184) at (-4.5, 0.25) {};
		\node (185) at (-4, 0.25) {};
		\node (186) at (-4.5, -0.25) {};
		\node (187) at (-4, -0.25) {};
		\node (188) at (-9.5, -0.75) {};
		\node (189) at (-9.5, -1.25) {};
		\node (190) at (-9.5, -7.25) {};
		\node (191) at (-9.5, -8.25) {};
		\node (192) at (-9.5, -2.25) {};
		\node (193) at (-9.5, -3.25) {};
		\node (194) at (-9.5, 1.25) {};
		\node (195) at (-9.5, 0.75) {};
		\node (196) at (-1.25, -1) {};
		\node (197) at (-0.75, -1) {};
		\node (198) at (-1.25, 1) {};
		\node (199) at (-0.75, 1) {};
		\node (200) at (-3, -5.25) {};
		\node (201) at (-2, -5.25) {};
		\node (202) at (-1, -5.25) {};
		\node (203) at (0, -5.25) {};
		\node (204) at (-5.75, 1) {$\bullet$};
		\node (205) at (-5.25, 1) {$\bullet$};
		\node (206) at (-5.75, -1) {$\bullet$};
		\node (207) at (-5.25, -1) {$\bullet$};
		\node (208) at (1.5, 1.5) {$\bullet$};
		\node (209) at (1.5, 1) {$\bullet$};
		\node (210) at (1.5, -1) {$\bullet$};
		\node (211) at (1.5, -1.5) {$\bullet$};
		\node (212) at (-6, -2.75) {$\bullet$};
		\node (213) at (-5, -2.75) {$\bullet$};
		\node (214) at (-6, -7.75) {$\bullet$};
		\node (215) at (-5, -7.75) {$\bullet$};
		\node (216) at (1, -5) {$\bullet$};
		\node (217) at (1, -5.5) {$\bullet$};
		\node (218) at (3, -5) {$\bullet$};
		\node (219) at (3, -5.5) {$\bullet$};
		\draw [in=75, out=-90, looseness=1.25] (8.center) to (4.center);
		\draw [in=105, out=-90, looseness=1.25] (9.center) to (5.center);
		\draw [in=90, out=-75, looseness=1.25] (6.center) to (10.center);
		\draw [in=105, out=-105, looseness=1.25] (7.center) to (11.center);
		\draw [bend right=90, looseness=1.75] (10.center) to (11.center);
		\draw [bend left=90, looseness=1.75] (8.center) to (9.center);
		\draw [bend right=15] (4.center) to (24.center);
		\draw [bend right=15] (24.center) to (6.center);
		\draw [bend left=15] (5.center) to (25.center);
		\draw [bend left=15] (25.center) to (7.center);
		\draw [bend right=15] (26.center) to (34.center);
		\draw [bend right=15] (34.center) to (28.center);
		\draw [bend left=15] (27.center) to (35.center);
		\draw [bend left=15] (35.center) to (29.center);
		\draw [in=75, out=-90, looseness=1.25] (38.center) to (36.center);
		\draw [in=105, out=-90, looseness=1.25] (39.center) to (37.center);
		\draw [bend left=90, looseness=1.75] (38.center) to (39.center);
		\draw [in=90, out=-75, looseness=1.25] (40.center) to (42.center);
		\draw [in=105, out=-105, looseness=1.25] (41.center) to (43.center);
		\draw [bend right=90, looseness=1.75] (42.center) to (43.center);
		\draw [bend left=45] (49.center) to (45.center);
		\draw [bend left=45] (53.center) to (50.center);
		\draw [bend left=45] (50.center) to (54.center);
		\draw [bend right=315] (47.center) to (49.center);
		\draw [bend left=45] (48.center) to (44.center);
		\draw [bend left=45] (52.center) to (51.center);
		\draw [bend left=45] (51.center) to (55.center);
		\draw [bend right=45] (48.center) to (46.center);
		\draw [bend left] (61.center) to (57.center);
		\draw [bend left] (65.center) to (62.center);
		\draw [bend left] (62.center) to (66.center);
		\draw [bend left] (59.center) to (61.center);
		\draw [bend left=45] (60.center) to (56.center);
		\draw [bend left=45] (64.center) to (63.center);
		\draw [bend left=45] (63.center) to (67.center);
		\draw [bend right=45] (60.center) to (58.center);
		\draw [bend left=60] (78.center) to (77.center);
		\draw [bend left=60] (77.center) to (79.center);
		\draw [in=-120, out=90] (69.center) to (71.center);
		\draw [bend right=45] (71.center) to (76.center);
		\draw [bend right=45] (75.center) to (70.center);
		\draw [in=90, out=-60] (70.center) to (68.center);
		\draw [bend right=60] (73.center) to (72.center);
		\draw [bend right=60] (72.center) to (74.center);
		\draw [bend left=45] (80.center) to (82.center);
		\draw [in=270, out=60] (82.center) to (68.center);
		\draw [in=120, out=-90] (69.center) to (83.center);
		\draw [bend left=45] (83.center) to (81.center);
		\draw [bend left=60] (94.center) to (93.center);
		\draw [bend left=60] (93.center) to (95.center);
		\draw [in=-120, out=90] (85.center) to (87.center);
		\draw [bend right=45] (87.center) to (92.center);
		\draw [bend right=45] (91.center) to (86.center);
		\draw [in=90, out=-60] (86.center) to (84.center);
		\draw [bend right=60] (89.center) to (88.center);
		\draw [bend right=60] (88.center) to (90.center);
		\draw [bend left=45] (96.center) to (98.center);
		\draw [in=270, out=60] (98.center) to (84.center);
		\draw [in=120, out=-90] (85.center) to (99.center);
		\draw [bend left=45] (99.center) to (97.center);
		\draw [bend left=45] (105.center) to (101.center);
		\draw [bend left=45] (109.center) to (106.center);
		\draw [bend left=45] (106.center) to (110.center);
		\draw [bend right=315] (103.center) to (105.center);
		\draw [bend left=45] (104.center) to (100.center);
		\draw [bend left=45] (108.center) to (107.center);
		\draw [bend left=45] (107.center) to (111.center);
		\draw [bend right=45] (104.center) to (102.center);
		\draw [bend left] (127.center) to (125.center);
		\draw [bend left] (131.center) to (128.center);
		\draw [bend left=45] (126.center) to (124.center);
		\draw [bend left=45] (130.center) to (129.center);
		\draw [bend left] (136.center) to (138.center);
		\draw [bend left] (133.center) to (135.center);
		\draw [bend left=45] (137.center) to (139.center);
		\draw [bend right=45] (134.center) to (132.center);
		\draw [bend left=15, looseness=1.25, dashed] (140.center) to (142.center);
		\draw [bend left=15, looseness=0.75, dashed] (142.center) to (143.center);
		\draw [bend left=15, looseness=1.25, dashed] (143.center) to (141.center);
		\draw [bend left=15, looseness=1.25] (141.center) to (145.center);
		\draw [bend left=15, looseness=0.75] (145.center) to (144.center);
		\draw [bend left=15, looseness=1.25] (144.center) to (140.center);
		\draw (142.center) to (145.center);
		\draw (144.center) to (143.center);
		\draw [bend left=15, looseness=1.25, dashed] (146.center) to (148.center);
		\draw [bend left=15, looseness=0.75, dashed] (148.center) to (149.center);
		\draw [bend left=15, looseness=1.25, dashed] (149.center) to (147.center);
		\draw [bend left=15, looseness=1.25] (147.center) to (151.center);
		\draw [bend left=15, looseness=0.75] (151.center) to (150.center);
		\draw [bend left=15, looseness=1.25] (150.center) to (146.center);
		\draw (148.center) to (151.center);
		\draw (150.center) to (149.center);
		\draw [bend left=15, looseness=1.25, dashed] (152.center) to (154.center);
		\draw [bend left=15, looseness=0.75, dashed] (154.center) to (155.center);
		\draw [bend left=15, looseness=1.25, dashed] (155.center) to (153.center);
		\draw [bend left=15, looseness=1.25] (153.center) to (157.center);
		\draw [bend left=15, looseness=0.75] (157.center) to (156.center);
		\draw [bend left=15, looseness=1.25] (156.center) to (152.center);
		\draw (154.center) to (157.center);
		\draw (156.center) to (155.center);
		\draw [bend left=15, looseness=1.25, dashed] (158.center) to (160.center);
		\draw [bend left=15, looseness=0.75, dashed] (160.center) to (161.center);
		\draw [bend left=15, looseness=1.25, dashed] (161.center) to (159.center);
		\draw [bend left=15, looseness=1.25] (159.center) to (163.center);
		\draw [bend left=15, looseness=0.75] (163.center) to (162.center);
		\draw [bend left=15, looseness=1.25] (162.center) to (158.center);
		\draw (160.center) to (163.center);
		\draw (162.center) to (161.center);
		\draw [bend left=15, looseness=1.25, dashed] (164.center) to (166.center);
		\draw [bend left=15, looseness=0.75, dashed] (166.center) to (167.center);
		\draw [bend left=15, looseness=1.25, dashed] (167.center) to (165.center);
		\draw [bend left=15, looseness=1.25] (165.center) to (169.center);
		\draw [bend left=15, looseness=0.75] (169.center) to (168.center);
		\draw [bend left=15, looseness=1.25] (168.center) to (164.center);
		\draw (166.center) to (169.center);
		\draw (168.center) to (167.center);
		\draw [bend left=15, looseness=1.25, dashed] (170.center) to (172.center);
		\draw [bend left=15, looseness=0.75, dashed] (172.center) to (173.center);
		\draw [bend left=15, looseness=1.25, dashed] (173.center) to (171.center);
		\draw [bend left=15, looseness=1.25] (171.center) to (175.center);
		\draw [bend left=15, looseness=0.75] (175.center) to (174.center);
		\draw [bend left=15, looseness=1.25] (174.center) to (170.center);
		\draw (172.center) to (175.center);
		\draw (174.center) to (173.center);
		\draw [bend left=15, looseness=1.25, dashed] (176.center) to (178.center);
		\draw [bend left=15, looseness=0.75, dashed] (178.center) to (179.center);
		\draw [bend left=15, looseness=1.25, dashed] (179.center) to (177.center);
		\draw [bend left=15, looseness=1.25] (177.center) to (181.center);
		\draw [bend left=15, looseness=0.75] (181.center) to (180.center);
		\draw [bend left=15, looseness=1.25] (180.center) to (176.center);
		\draw (178.center) to (181.center);
		\draw (180.center) to (179.center);
		\draw [bend left=15, looseness=1.25, dashed] (182.center) to (184.center);
		\draw [bend left=15, looseness=0.75, dashed] (184.center) to (185.center);
		\draw [bend left=15, looseness=1.25, dashed] (185.center) to (183.center);
		\draw [bend left=15, looseness=1.25] (183.center) to (187.center);
		\draw [bend left=15, looseness=0.75] (187.center) to (186.center);
		\draw [bend left=15, looseness=1.25] (186.center) to (182.center);
		\draw (184.center) to (187.center);
		\draw (186.center) to (185.center);
		\draw [bend left, gray, dashed] (188.center) to (189.center);
		\draw [bend right, gray] (188.center) to (189.center);
		\draw [bend left, gray, dashed] (190.center) to (191.center);
		\draw [bend right, gray] (190.center) to (191.center);
		\draw [bend left, gray, dashed] (192.center) to (193.center);
		\draw [bend right, gray] (192.center) to (193.center);
		\draw [bend left, gray, dashed] (194.center) to (195.center);
		\draw [bend right, gray] (194.center) to (195.center);
		\draw [bend left, gray, dashed] (196.center) to (197.center);
		\draw [bend right, gray] (196.center) to (197.center);
		\draw [bend left, gray, dashed] (198.center) to (199.center);
		\draw [bend right, gray] (198.center) to (199.center);
		\draw [bend left, gray, dashed] (200.center) to (201.center);
		\draw [bend right, gray] (200.center) to (201.center);
		\draw [bend left, gray, dashed] (202.center) to (203.center);
		\draw [bend right, gray] (202.center) to (203.center);
		\draw [stealth-stealth,dotted] (202.center) to (201.center);
\end{tikzpicture}
\caption{The four topological types of degeneration of a pair of conjugated loops, drawn in gray. In the first three cases, one can add genus symmetrically and cross-caps to the picture. In the last one, only genus can be added.}
\label{figureDegene}
\end{figure}

	\subsection{Degeneration formulas}
	\label{subsectionDegenerationFormula}

The signed Real Hurwitz numbers are topological. Indeed, they do not depend on the complex structure chosen in the target. In particular, they remain constant during a degeneration process. At the limit, it implies an identity with the numbers associated to the degenerated marked Real Riemann surface. This is the content of Proposition \ref{propositionDegene}. We have chosen to provide a short representation-theoretic proof, but a combinatorial one could also have been given using Lemma \ref{lemmaFuntorialityAdm}. The formulas involve the combinatorial coefficients \begin{equation}
z_c = \frac{\# G}{\# c}
\end{equation} where $c$ is a conjugacy class of $G$.

\begin{proposition}
The following degeneration formulas hold. \begin{enumerate}[label=(\alph*)]
	\item Real-Real degeneration : \begin{equation}
	\mathbb{R}H^\bullet_{g}(\bm{c},\bm{c'}) = \sum_{c \in C(G)} z_c \mathbb{R}H^\bullet_{h}(\bm{c},c) \mathbb{R}H^\bullet_{h'}(c,\bm{c'})
	\end{equation} with $g = h + h' + 1$.
	\item Real-Complex degeneration : \begin{equation}
	\mathbb{R}H^\bullet_{g}(\bm{c},\bm{c'}) = \sum_{c \in C(G)} z_c \mathbb{R}H^\bullet_{h}(\bm{c},c) H^\bullet_{h'}(c,\bm{c'})
	\end{equation} with $g = h + 2h'$.
	\item Real degeneration : \begin{equation}
	\mathbb{R}H^\bullet_{g}(\bm{c}) = \sum_{c \in C(G)}  z_c \mathbb{R} H^\bullet_{h}(\bm{c},c,c)
	\end{equation} with $g = h+2$.
	\item Complex degeneration : \begin{equation}
	\mathbb{R}H^\bullet_{g}(\bm{c}) = \sum_{c \in C(G)}  \varepsilon(c) z_c H^\bullet_{h}(\bm{c},c,c)
	\end{equation} with $g = 2h+1$.
\end{enumerate}
\label{propositionDegene}
\end{proposition}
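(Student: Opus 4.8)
The plan is to feed the closed formulas of Theorem~\ref{theoremRealExplicitG} and of~(\ref{equationClosedFormulaComplexG}) into the right-hand side of each identity and to collapse the sum over the glued class $c$ by character orthogonality; all four formulas then reduce to bookkeeping of exponents. To lighten the notation I would set $B_\rho = dim(\rho)/\# G$, so that $H^\bullet_g(\bm{c}) = \sum_{\rho \in Irr(G)} B_\rho^{2-2g} \prod_i f_{\bm{c}_i}(\rho)$, and $A_\rho = SFS(\rho) B_\rho$, so that Theorem~\ref{theoremRealExplicitG} reads $\mathbb{R}H^\bullet_g(\bm{c}) = \sum_{\rho^T=\rho} A_\rho^{1-g}\prod_i f_{\bm{c}_i}(\rho)$. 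The only analytic input is the pair of quadratic sums
\begin{equation}
\sum_{c} z_c f_c(\rho) f_c(\rho') = B_\rho^{-2}\, \delta_{\rho,\rho'}, \qquad \sum_c \varepsilon(c) z_c f_c(\rho) f_c(\rho') = B_\rho^{-2}\, \delta_{\rho^T,\rho'}.
\end{equation}
Both follow from the orthogonality of characters. Writing $f_c(\rho) = \tfrac{\# c}{dim(\rho)}\chi_\rho(c)$ and $z_c = \# G/\# c$, the first sum equals $\tfrac{\# G}{dim(\rho)\,dim(\rho')}\sum_{\tau \in G}\chi_\rho(\tau)\chi_{\rho'}(\tau)$; as the characters are real-valued one has $\chi_{\rho'}(\tau) = \chi_{\rho'}(\tau^{-1})$, so the inner sum is $\sum_\tau \chi_\rho(\tau)\chi_{\rho'}(\tau^{-1}) = \# G\,\delta_{\rho,\rho'}$ and the stated value follows. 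For the second sum, $\varepsilon(\tau)\chi_\rho(\tau) = \chi_{\rho^T}(\tau)$, so the same computation with $\rho$ replaced by $\rho^T$ (and $dim(\rho^T) = dim(\rho)$) yields $B_\rho^{-2}\delta_{\rho^T,\rho'}$.

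With these in hand, cases (a), (b), (c) are immediate. In each one I would substitute the relevant formulas, interchange the sum over $c$ with the sums over irreducible representations, and apply the first identity; it forces the two representations to coincide and contributes the factor $B_\rho^{-2}$. Collecting the powers of $B_\rho$ and $SFS(\rho)$ and using that $SFS(\rho)^2 = 1$ on symmetric representations (Lemma~\ref{lemmaSFS}) to absorb the even parity shifts, one checks that the exponent of $A_\rho$ becomes $1-g$ with $g = h+h'+1$, $g = h+2h'$ and $g = h+2$ respectively, which is exactly $\mathbb{R}H^\bullet_g$. In (b) the Complex factor ranges over all of $Irr(G)$, but the Kronecker symbol pins it to the symmetric $\rho$ coming from the Real factor; in (c) one uses the diagonal $\rho=\rho'$ of the first identity on the repeated class, the restriction to symmetric $\rho$ being already built into the genus-$h$ Real factor.

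The one case that is not purely formal is the Complex degeneration (d), and this is where I expect the real content to sit. Here the genus-$h$ piece is a doublet, so it contributes the full Complex sum $\sum_{\rho \in Irr(G)} B_\rho^{2-2h}\prod_i f_{\bm{c}_i}(\rho)\,f_c(\rho)^2$ over all of $Irr(G)$, not just the symmetric representations. Substituting and summing over $c$ now invokes the second, $\varepsilon$-twisted identity on $f_c(\rho)^2$, which returns $B_\rho^{-2}\delta_{\rho^T,\rho}$: the Kronecker symbol is precisely the indicator that $\rho$ is symmetric. Thus the sign $\varepsilon(c)$ in the statement is exactly what cuts the Complex sum over $Irr(G)$ down to the symmetric sum defining $\mathbb{R}H^\bullet_g$; had it been absent the twist would be trivial and the identity would fail. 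A final exponent check, $B_\rho^{2-2h}B_\rho^{-2} = B_\rho^{-2h} = A_\rho^{1-g}$ for $g = 2h+1$ (again using $SFS(\rho)^{-2h}=1$), completes the case.

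I would close by remarking that these four identities amount to inserting the copairings $\sum_c z_c\, c \otimes \overline{c}$ and $\sum_c \varepsilon(c) z_c\, c \otimes \overline{c}$ dictated by the degeneration process of Subsection~\ref{subsectionDegenerations}, which is the geometric reason the factor $\varepsilon(c)$ attaches to the orientation-reversing type-(d) degeneration and to no other. The genuine obstacle, then, is not the computation but case (d): one must be sure that the $\varepsilon$-twist converts untwisted orthogonality into the relation whose delta is the symmetry indicator $\delta_{\rho^T,\rho}$, and hence that the algebraic $\varepsilon(c)$ matches the topological contribution of the collapsed conjugate circles; everything else is the orthogonality bookkeeping above.
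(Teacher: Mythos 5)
Your proposal is correct and follows essentially the same route as the paper's proof: substitute the closed formulas of Theorem \ref{theoremRealExplicitG} and (\ref{equationClosedFormulaComplexG}), collapse the sum over the glued class via the untwisted orthogonality relation for cases (a)--(c) and the $\varepsilon$-twisted relation $\sum_c \varepsilon(c) z_c f_c(\rho) f_c(\rho') = \left( \frac{dim(\rho)}{\# G} \right)^{-2} \delta_{\rho^T,\rho'}$ for case (d), then absorb parity shifts using $SFS_{G,\varepsilon}(\rho)^2 = 1$ on symmetric representations. The only additions beyond the paper are your explicit derivation of the two orthogonality identities (which the paper simply states) and the closing copairing remark, neither of which changes the argument.
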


\begin{proof}
The characters of $G$ are real-valued, see the assumption made in Subsection \ref{subsectionExtensionG}. In this case, the orthogonality of the characters reads \begin{equation}
\sum_{c \in C(G)} z_c f_c(\rho) f_c(\rho') = \left( \frac{dim(\rho)}{\# G} \right)^{-2} \delta_{\rho,\rho'}.
\end{equation} The degeneration formulas are a direct consequence of this relation. Consider for instance the formula (b). The right-hand side involves the sum over $c \in C(G)$ and two sums over irreducible representations $\rho,\rho'$. We perform the sum over $c$ first. The orthogonality of the characters transforms the two sums over irreducible characters into a single one. Thus, the right-hand side equals \begin{equation}
\sum_{\rho^T = \rho} \left( SFS_{G,\varepsilon}(\rho) \frac{dim(\rho)}{\# G} \right)^{1-h} \prod_{i} f_{\bm{c}_i}(\rho) \left( \frac{dim(\rho)}{\# G} \right)^{-2} \left( \frac{dim(\rho)}{\# G} \right)^{2-2h'} \prod_{j} f_{\bm{c'}_j}(\rho).
\end{equation} All the representations involved in the sum satisfy $SFS_{G,\varepsilon}(\rho) = \pm 1$, so that we can add it inside the powers $-2$ and $2-2h$. The degeneration formula (b) is proved by noticing that $1 - g = (1 - h) - 2 + (2 - 2h')$ if $g = h + 2h'$.

The proof of the formulas (a) and (c) is similar. In order to prove (d), we use the following version of the orthogonality of the characters : \begin{equation}
\sum_{c \in C(G)} \varepsilon(c) z_c f_c(\rho) f_c(\rho) = \sum_{c \in C(G)} z_c f_c(\rho) f_c(\rho^T) = \left( \frac{dim(\rho)}{\# G} \right)^{-2} \delta_{\rho,\rho^T}.
\end{equation} The right-hand side of the formula (d) involves a sum over $c$ and a sum over $\rho$. Performing the sum over $c$ first restricts the sum over $\rho$ to the symmetric representations, and the right-hand side becomes \begin{equation}
\sum_{\rho^T = \rho} \left( \frac{dim(\rho)}{\# G} \right)^{2-2h} \prod_{i} f_{\bm{c}_i}(\rho) \left( \frac{dim(\rho)}{\# G} \right)^{-2}.
\end{equation} We incorporate $SFS_{G,\varepsilon}(\rho)$ as before and notice that $1 - 2g = 2 - 2h - 2$. This proves the formula (d).
\end{proof}

The degeneration formulas (b) and (d) both involve the creation of a doublet component, whose corresponding $(G,\varepsilon)$-Doublet Hurwitz numbers is given by Definition \ref{definitionGEpsilonComplexHurwitz}. 

In formula (b), the summands corresponding to an odd conjugacy class $c$ vanish, and the left-hand-side vanishes if $\bm{c}$ or $\bm{c'}$ contain an odd conjugacy. Thus, one can replace $H^\bullet_{h'}(c,\bm{c'})$ by $H^\bullet_{h',a}(c,\bm{c'})$ for any chosen puncture class $a$ in the right-hand side of (b).

However, it is necessary to keep the sign $\varepsilon(c)$ in (d). The righ-hand side involves \begin{equation}
H^\bullet_{h,a}(\bm{c},c,c) =  \varepsilon(c) H^\bullet_{h}(\bm{c},c,c)
\end{equation} where $a$ is the puncture class consiting of a simple loop around one of the marked points created. One can replace $a$ by any puncture class defined from a marking consistent with the degeneration process. The latter are constrained by Lemma \ref{lemmaDegeneCaseD}.

\begin{remark}
Using degenerations of types (a) and (b), one is able to express any $(G,\varepsilon)$-Real Hurwitz number in terms of $\mathbb{R}H^\bullet_{0,G}(c)$ with $c \in C(G)$ and the $G$-Complex Hurwitz numbers.
\end{remark}

\begin{corollary}
If $G = \mathfrak{S}_d$ is the symmetric group of order $d$, then the formula \begin{equation}
\mathbb{R}  H^\bullet_{g,d}(\bm{\lambda};\bm{k}) = \sum_{\nu_0,\ldots,\nu_g} H^\bullet_{0,d}(\bm{\lambda},\nu_0,\ldots,\nu_g ; \bm{k}) \prod_{i=0}^g z_{\nu_i} \mathbb{R}H^\bullet_{0,d}(\nu_i)
\end{equation} holds with completed cycles.
\end{corollary}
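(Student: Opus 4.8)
The plan is to prove the identity by a direct character-theoretic computation, substituting the closed formulas for each factor on the right-hand side and then collapsing the sums over the $\nu_i$ by orthogonality of the characters. Conceptually this is the $\mathfrak{S}_d$-with-completed-cycles instance of the iterated degeneration announced in the remark following Proposition \ref{propositionDegene}: one peels off $g$ genus-$0$ Real caps through $g$ successive type (a) degenerations, each lowering the genus by one and producing a cap labelled by some $\nu_i$, and then splits the remaining genus-$0$ Real piece into a final Real cap and a central Complex doublet through one type (b) degeneration. Since the completed-cycle data $\bm{k}$ is carried by the points $C$, which can be kept on the central doublet throughout, the completed cycles enter only through the Complex factor $H^\bullet_{0,d}(\bm{\lambda},\nu_0,\ldots,\nu_g;\bm{k})$, and the partitions $\nu_0,\ldots,\nu_g$ produced are genuine partitions of $d$, so the $f_{\nu_i}$ occurring are ordinary class functions and the orthogonality relation applies unchanged.

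Concretely, I would insert the closed formula for $\mathbb{R}H^\bullet_{0,d}(\nu_i)$ (Theorem \ref{theoremRealExplicit} with $g=0$, $n=1$) together with the formula (\ref{equationHurwitzCompletedCycles}) for $H^\bullet_{0,d}(\bm{\lambda},\nu_0,\ldots,\nu_g;\bm{k})$ into the right-hand side, and then exchange the order of summation so that the single sum over $|\mu|=d$ coming from the Complex factor is outermost and the $g+1$ sums over the $\nu_i$ are innermost. For fixed $\mu$ each $\nu_i$-sum factors off and reads
\begin{equation}
\sum_{\nu_i} z_{\nu_i}\, f_{\nu_i}(\mu) \sum_{\kappa_i^T=\kappa_i} (-1)^{\frac{d-r(\kappa_i)}{2}} \frac{dim(\kappa_i)}{d!}\, f_{\nu_i}(\kappa_i).
\end{equation}
Summing over $\nu_i$ first and using the orthogonality relation $\sum_{\nu} z_\nu f_\nu(\mu) f_\nu(\kappa) = (dim(\mu)/d!)^{-2}\delta_{\mu,\kappa}$ recorded in the proof of Proposition \ref{propositionDegene}, the inner $\kappa_i$-sum collapses to $\kappa_i=\mu$; the constraint $\kappa_i^T=\kappa_i$ then forces $\mu$ to be symmetric, and the whole bracket equals $(-1)^{\frac{d-r(\mu)}{2}}(dim(\mu)/d!)^{-1}$ when $\mu^T=\mu$ and vanishes otherwise.

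Taking the product over the $g+1$ indices $i=0,\ldots,g$ contributes the factor $(-1)^{(g+1)\frac{d-r(\mu)}{2}}(dim(\mu)/d!)^{-(g+1)}$ and restricts the outer $\mu$-sum to symmetric partitions, while the Complex factor has left behind $(dim(\mu)/d!)^{2}\prod_i f_{\bm{\lambda}_i}(\mu)\prod_j p^*_{\bm{k}_j}(\mu)/\bm{k}_j$. Multiplying, the power of $dim(\mu)/d!$ becomes $2-(g+1)=1-g$, and since $(-1)^{\frac{d-r(\mu)}{2}}=\pm1$ and $g+1\equiv 1-g \pmod 2$ the sign $(-1)^{(g+1)\frac{d-r(\mu)}{2}}$ equals $(-1)^{(1-g)\frac{d-r(\mu)}{2}}$; this is precisely the explicit formula for $\mathbb{R}H^\bullet_{g,d}(\bm{\lambda};\bm{k})$ proved above. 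The only steps requiring genuine care — and where I expect the main, though modest, obstacle — are bookkeeping ones: verifying that the completed-cycle insertions can be kept on the central doublet so that the $\nu_i$ stay size-$d$ partitions to which orthogonality applies, and reconciling the two sign conventions through the parity identity $g+1\equiv 1-g$.
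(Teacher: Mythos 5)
Your proof is correct, but it takes a different route from the paper's, which is a two-step reduction rather than a direct computation. In the paper, the identity \emph{without} completed cycles is deduced by iterating the degeneration formulas of Proposition \ref{propositionDegene}: $g$ applications of type (a) peel off the caps $\mathbb{R}H^\bullet_{0,d}(\nu_i)$ one genus at a time, and one application of type (b) splits the remaining genus-$0$ Real piece into the last cap and the Complex genus-$0$ factor. The completed cycles are then incorporated for free: by (\ref{equationCompletedCycles}) and (\ref{equationfExtended}), both sides of the completed-cycle identity are the \emph{same} linear combinations (with coefficients $q_{\bm{k}_j,\nu}$) of instances of the identity without completed cycles, so the formula extends by linearity. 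You instead inline everything into a single orthogonality computation, substituting the closed formulas and collapsing the $\nu_i$-sums, with the factors $p^*_{\bm{k}_j}(\mu)/\bm{k}_j$ riding along as spectators that depend only on $\mu$. Since Proposition \ref{propositionDegene} is itself proved by orthogonality, the two arguments rest on the same underlying fact; yours re-derives the collapse explicitly (and your bookkeeping is sound: the $\nu_i$ are honest partitions of $d$ so the ordinary orthogonality relation applies, $d-r(\mu)$ is even for symmetric $\mu$, and $g+1 \equiv 1-g \pmod 2$ reconciles the signs), whereas the paper reuses the degeneration formulas as black boxes and dispatches the completed cycles with a one-line linearity remark. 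The paper's route buys brevity and keeps the geometric degeneration picture visible; yours buys a self-contained verification that makes transparent why the completed-cycle insertions cause no complication.
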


\begin{proof}
The formula holds without completed cycles due to the degeneration formulas (a) and (b). The disconnected signed Real Hurwitz numbers and disconnected Complex Hurwitz numbers with completed cycles are linear combinations of their analogs without completed cycles. Thus, the formula holds also with completed cycles.
\end{proof}

The last degeneration formula has a surprising combinatorial consequence.

\begin{corollary}
Given a finite group $G$ whose characters are real-valued and $\varepsilon : G \rightarrow \{ \pm 1 \}$ a non-trivial morphism, \begin{equation}
\# \{ \rho \in Irr(G) \ | \ \rho^T = \rho \} = \sum_{c \in C(G)} \varepsilon(c).
\end{equation} In particular, the number of symmetric Young diagrams of size $d$ is given by \begin{equation}
\sum_{| \lambda | = d} \varepsilon(c_\lambda).
\end{equation}
\end{corollary}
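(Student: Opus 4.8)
The plan is to specialize the degeneration formula (d) of Proposition \ref{propositionDegene} to the case announced in the introduction, where a connected genus $1$ Real Riemann surface with no marked points degenerates into the doublet of a genus $0$ surface. Concretely, I would take $g = 1$, $h = 0$ and the sequence $\bm{c}$ empty, so that formula (d) reads
\begin{equation}
\mathbb{R}H^\bullet_1() = \sum_{c \in C(G)} \varepsilon(c)\, z_c\, H^\bullet_0(c,c).
\end{equation}
The proof then reduces to evaluating each side of this identity separately and comparing.

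For the left-hand side, I would apply Theorem \ref{theoremRealExplicitG} with $g = 1$ and no marked points. Since $1 - g = 0$, each summand becomes $\left(SFS_{G,\varepsilon}(\rho)\,dim(\rho)/\#G\right)^0 = 1$, as long as the base is nonzero. The hypothesis that the characters of $G$ are real-valued guarantees, via Lemma \ref{lemmaSFS}, that every irreducible representation is real or quaternionic, and that every \emph{symmetric} representation (precisely those indexed by the condition $\rho^T = \rho$ in the sum) has $SFS_{G,\varepsilon}(\rho) = \pm 1 \neq 0$. Hence the left-hand side equals exactly $\#\{\rho \in Irr(G) \mid \rho^T = \rho\}$.

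For the right-hand side, the key computation is $H^\bullet_{0,G}(c,c) = 1/z_c$. I would obtain it combinatorially from the description in Subsection \ref{subsectionClassiclaHurwitz}: a principal $G$-bundle over the twice-punctured sphere is, up to conjugation, a single element $x \in G$ with $x \in c$ and $x^{-1} \in c$, and the automorphism-weighted count equals $\tfrac{1}{\#G}$ times the number of such $x$. Because real-valued characters force $\overline{c} = c$, the condition $x^{-1} \in c$ is automatic once $x \in c$, so there are $\#c$ solutions and $H^\bullet_{0,G}(c,c) = \#c / \#G = 1/z_c$. (The same value also follows from (\ref{equationClosedFormulaComplexG}) and (\ref{equationFcRho}) together with the column orthogonality $\sum_\rho \chi_\rho(c)^2 = z_c$.) Substituting this into the degeneration formula makes the factors $z_c$ cancel, leaving $\sum_{c \in C(G)} \varepsilon(c)$, which proves the first identity.

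The specialization to $\mathfrak{S}_d$ is then immediate: by Subsection \ref{subsectionSFSsymmetric} the involution $\rho_\mu \mapsto \rho_\mu^T$ corresponds to $\mu \mapsto \mu^T$, so symmetric irreducible representations are counted by symmetric Young diagrams, and $\varepsilon(c_\lambda)$ is the sign of a permutation of cycle type $\lambda$; since all characters of $\mathfrak{S}_d$ are real-valued the hypothesis is satisfied. The only subtlety I would flag is the ambiguous-looking factor $(SFS_{G,\varepsilon}(\rho)\cdot\ldots)^{0}$ on the left-hand side: it is harmless precisely because the real-character hypothesis excludes the complex symmetric case (2a) of Lemma \ref{lemmaSFS}, which is the sole situation in which a symmetric representation could have a vanishing signed Frobenius–Schur indicator. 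This is the point that must be checked carefully, but once it is dispatched the rest is a direct substitution.
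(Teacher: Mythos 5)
Your proposal is correct and follows essentially the same route as the paper: both evaluate $\mathbb{R}H^\bullet_1(\emptyset)$ in two ways, once via Theorem \ref{theoremRealExplicitG} (giving $\# \{ \rho \ | \ \rho^T = \rho \}$) and once via the type (d) degeneration of Proposition \ref{propositionDegene} (giving $\sum_c \varepsilon(c) z_c H^\bullet_0(c,c) = \sum_c \varepsilon(c)$). The only difference is that you make explicit two details the paper leaves implicit, namely the evaluation $H^\bullet_{0,G}(c,c) = 1/z_c$ and the fact that the real-character hypothesis rules out the case (2a) of Lemma \ref{lemmaSFS}, so that no symmetric $\rho$ has vanishing signed Frobenius--Schur indicator; both checks are accurate.
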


\begin{proof}
The number $\mathbb{R}H^\bullet_1(\emptyset)$ is given explicitly by \begin{equation}
\mathbb{R}H^\bullet_1(\emptyset) = \sum_{\rho^T = \rho} 1 = \# \{ \rho \in Irr(G) \ | \ \rho^T = \rho \}
\end{equation} according to theorem \ref{theoremRealExplicitG}. Using a degeneration of type (d), it also reads \begin{equation}
\mathbb{R}H^\bullet_1(\emptyset) = \sum_{c \in C(G)} \varepsilon(c) z_c H^\bullet_0(c,c) = \sum_{c \in C(G)} \varepsilon(c).
\end{equation}
\end{proof}

	\subsection{Extended Frobenius algebra}

In this Subsection, we use $(G,\varepsilon)$-Real Hurwitz numbers to describe an extra structure on the Frobenius algebra $(Z \mathbb{C} G,\star,e,\eta)$ of Subsection \ref{subsectionFrobenius}. This structure is called an \textit{extended Frobenius algebra} in \cite{articleTuraevTurner}. The case of the symmetric groups with the sign morphism has been obtained in \cite{articleGeorgievaIonel} using the Real Gromov-Witten invariants. Recall the vector \begin{equation}
\Delta = \sum_{c \in C(G)} z_c c \otimes \overline{c}
\end{equation} introduced in Subsection \ref{subsectionFrobenius}.

\begin{definition}[Extended Frobenius algebra, \cite{articleTuraevTurner}]
An \textit{extended Frobenius algebra} is a Frobenius algebra $(A,\star,e,\eta)$ together with \begin{itemize}
	\item an involutive automorphism $\Omega$ of $(A,\star,e,\eta)$ and
	\item a vector $\mathcal{U} \in A$ which satisfies $\Omega(x \star \mathcal{U}) = x \star \mathcal{U}$ for all $x$, and such that $\mathcal{U} \star \mathcal{U}$ equals the product of the bivector $( \Omega \otimes id_A) (\Delta)$.
\end{itemize}
\end{definition}

We define a linear map \begin{equation}
Z_{X,\sigma,B} : Z \mathbb{C} G^{\otimes m} \rightarrow Z \mathbb{C} G^{\otimes n}
\end{equation} for any marked Real Riemann surface $(X,\sigma,B)$ with $m$ pairs of conjugated marked points considered as inputs and $n$ as outputs. In what follows, the fact that the characters of $G$ are real-valued implies that the conjugacy classes $c$ and $\overline{c}$ are always equal. The map $Z_{X,\sigma,B}$ is defined in the standard basis. If $(X,\sigma)$ is a connected Real Riemann surface of genus $g$, we set the image of $\bm{c}_1 \otimes \ldots \otimes \bm{c}_m$ to be \begin{equation}
\sum_{\bm{c'}} z_{\bm{c'}_1} \ldots z_{\bm{c'}_n} \mathbb{R} H^\bullet_{g,G,\varepsilon}(\bm{{c}},\bm{c'}) \bm{c'}_1 \otimes \ldots \otimes \bm{c'}_n.
\end{equation} If $(X,\sigma)$ is the doublet of a genus $g$ Riemann surface, we set it to be \begin{equation}
\sum_{\bm{c'}} z_{\bm{c'}_1} \ldots z_{\bm{c'}_n}  H^\bullet_{g,G,\varepsilon,a}(\bm{{c}},\bm{c'}) \bm{c'}_1 \otimes \ldots \otimes \bm{c'}_n
\end{equation} where $a$ is the puncture class associated to the marking. In particular, if the doublet has a canonical marking, then $Z_{X,\sigma,B}$ coincides with the linear map associated in Subsection \ref{subsectionFrobenius} to one of the connected components of $X$.

In the general case, we define $Z_{X,\sigma,B}$ by declaring that $Z$ transforms disjoint unions into tensor product of maps.\\

Introduce a linear map $\Omega$ and a vector $\mathcal{U}$ as $Z_{X,\sigma,B}$ for the doublet of sphere with one input, one output and the non-canonical marking, and a connected sphere with one output respectively. 

\begin{lemma}
The vector $\mathcal{U}$ is the element \begin{equation}
\mathfrak{L} = \frac{1}{\# G} \sum_{\gamma \in G} \varepsilon(\gamma) \gamma^2 = \sum_{\rho^T = \rho} SFS(\rho) \frac{\# G}{dim (\rho)} v_\rho
\end{equation} of (\ref{equationVectorL}). The linear map $\Omega$ satisfies \begin{equation}
\Omega(c) = \varepsilon(c) c \text{ or equivalently } \Omega(v_\rho) = v_{\rho^T}.
\end{equation}
\label{lemmaExpressionZ}
\end{lemma}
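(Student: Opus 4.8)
The plan is to pin down $\mathcal{U}$ and $\Omega$ by two independent calculations: in each case I unwind the definition of $Z_{X,\sigma,B}$ for the relevant surface, feed in the appropriate closed formula from the earlier sections, and then match the result against the idempotent basis $(v_\rho)$. Both surfaces have underlying genus $0$, so every Hurwitz number appearing is a genus-$0$ count, which is exactly where the formulas simplify the most.

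For $\mathcal{U}$, which is $Z_{X,\sigma,B}$ for a connected genus $0$ Real Riemann surface with no input and one output, the definition gives $\mathcal{U} = \sum_{c'} z_{c'}\,\mathbb{R}H^\bullet_{0,G,\varepsilon}(c')\,c'$, the sum running over conjugacy classes $c'$. I would evaluate $\mathbb{R}H^\bullet_{0,G,\varepsilon}(c')$ using Theorem \ref{theoremRealExplicitG} at $g=0$, obtaining $\sum_{\rho^T=\rho} SFS_{G,\varepsilon}(\rho)\tfrac{dim(\rho)}{\# G}f_{c'}(\rho)$. Substituting $f_{c'}(\rho)=\tfrac{\# c'}{dim(\rho)}\chi_\rho(c')$ and $z_{c'}=\# G/\# c'$ makes all the dimension and order prefactors cancel, leaving $\mathcal{U}=\sum_{c'}\sum_{\rho^T=\rho}SFS_{G,\varepsilon}(\rho)\chi_\rho(c')\,c'$. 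On the other side, the proof of Corollary \ref{corollaryVectorL} already records the idempotent expansion of the element defined in (\ref{equationVectorL}), namely $\mathfrak{L}=\sum_\rho \tfrac{\# G}{dim(\rho)}SFS_{G,\varepsilon}(\rho)\,v_\rho$; expanding each $v_\rho=\sum_{c'}\tfrac{dim(\rho)}{\# G}\overline{\chi_\rho(c')}\,c'$ and using that the characters of $G$ are real-valued (so $\overline{\chi_\rho}=\chi_\rho$, and $SFS_{G,\varepsilon}$ annihilates the non-symmetric $\rho$ by Lemma \ref{lemmaSFS}) reproduces the very same expression. Hence $\mathcal{U}=\mathfrak{L}$, and the two displayed forms of $\mathfrak{L}$ are simply its defining sum and this idempotent expansion.

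For $\Omega$, which is $Z_{X,\sigma,B}$ for the doublet of a genus $0$ surface with one input, one output, and the non-canonical marking, the definition gives $\Omega(c)=\sum_{c'}z_{c'}\,H^\bullet_{0,G,\varepsilon,a}(c,c')\,c'$. The crucial ingredient is the genus-$0$ two-point Complex Hurwitz number: starting from (\ref{equationClosedFormulaComplexG}) with $g=0,n=2$, substituting $f_c(\rho)=\tfrac{\# c}{dim(\rho)}\chi_\rho(c)$ and applying column orthogonality of the (real-valued) characters, I would show $H^\bullet_{0,G}(c,c')=\tfrac{1}{z_c}\delta_{c,c'}$. The non-canonical marking of the sphere doublet has puncture class a single puncture loop $\delta_i$, so by Definition \ref{definitionGEpsilonComplexHurwitz} exactly one factor of $\varepsilon$ is inserted; because the Complex number is supported on the diagonal $c=c'$, it is irrelevant whether that factor reads $\varepsilon(c)$ or $\varepsilon(c')$. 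Plugging in, $\Omega(c)=z_c\,\varepsilon(c)\,\tfrac{1}{z_c}\,c=\varepsilon(c)\,c$. To obtain the idempotent form, I expand $v_\rho=\sum_c\tfrac{dim(\rho)}{\# G}\chi_\rho(c)\,c$, apply $\Omega$, and use $\varepsilon(c)\chi_\rho(c)=\chi_{\rho^T}(c)$ together with $dim(\rho^T)=dim(\rho)$ to recognize the result as $v_{\rho^T}$.

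I expect the main difficulty to be bookkeeping rather than conceptual. The delicate point is to identify correctly that the non-canonical marking corresponds to a puncture class made of a single loop, so that precisely one $\varepsilon$-factor enters $H^\bullet_{0,G,\varepsilon,a}$, and then to observe that the diagonal support of the genus-$0$ Complex Hurwitz number makes the choice of which marked point carries the sign immaterial. Some care is also warranted with the normalization of $\mathfrak{L}$: I would fix it through the idempotent expansion established in the proof of Corollary \ref{corollaryVectorL} rather than the raw sum $\sum_{\gamma\in G}\varepsilon(\gamma)\gamma^2$, since it is that expansion that matches the output of Theorem \ref{theoremRealExplicitG}.
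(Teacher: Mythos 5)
Your proposal is correct and takes essentially the same route as the paper: both unwind $Z_{X,\sigma,B}$ for the two genus-$0$ targets, evaluating $\mathcal{U}$ through Theorem \ref{theoremRealExplicitG} in the idempotent basis and $\Omega$ through the diagonal genus-$0$ Complex Hurwitz number $H^\bullet_{0,G}(c,c') = \tfrac{1}{z_c}\delta_{c,c'}$ combined with the single $\varepsilon$-insertion forced by the non-canonical marking. The only divergence is minor: the paper pins down the first form of $\mathcal{U}$ via the direct monodromy count $\mathbb{R}H^\bullet_{0,G}(c) = \tfrac{1}{\# G}\sum_{\gamma^2 \in c}\varepsilon(\gamma)$ and (\ref{equationVectorL}), whereas you instead match idempotent expansions using the one recorded in the proof of Corollary \ref{corollaryVectorL} (invoking Lemma \ref{lemmaSFS} and real-valuedness of the characters), which is an equally valid closing step.
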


\begin{proof}
The first expression of $\mathcal{U}$ is a consequence of \begin{equation}
\mathbb{R}H^\bullet_{0,G}(c) = \frac{1}{\# G} \sum_{\gamma^2 \in c} \varepsilon(\gamma) 
\end{equation} and (\ref{equationVectorL}). According to Theorem \ref{theoremRealExplicitG}, we also have \begin{align}
\mathcal{U} &= \sum_{c \in C(G)} z_c \sum_{\rho^T = \rho} SFS(\rho) \frac{dim (\rho)}{\#G} f_c(\rho)  c \\
	&=  \sum_{\rho^T = \rho} SFS(\rho) \frac{\# G}{dim (\rho)} \sum_{c \in C(G)}\frac{dim(\rho)}{\# G} \chi_\rho(c) c \\
	&= \sum_{\rho^T = \rho} SFS(\rho) \frac{\# G}{dim(\rho)} v_\rho.
\end{align} The computation for $\Omega$ is straightforward \begin{align}
\Omega(c) &= \sum_{c' \in C(G)} \varepsilon(c') z_{c'} H^\bullet_{0}(c,c') c' \\
	&= \varepsilon(c) c.
\end{align} In the idempotent basis, \begin{align}
\Omega(v_\rho) &= \sum_c \frac{dim(\rho)}{\# G} \chi_{\rho^T}(c) c \\
	&= \sum_c \frac{dim(\rho^T)}{\# G} \chi_{\rho^T}(c) c \\
	&= v_{\rho^T}.
\end{align}.
\end{proof}

\begin{lemma}
In the idempotent basis, the linear maps $Z_{X,\sigma,B)}$ can be described as follows :\begin{enumerate}[label = (\arabic*)]
	\item Let $(X,\sigma,B)$ be the doublet of a genus $g$ Riemann surface with a canonical marking. Then, $Z_{X,\sigma,B}$ is the linear map \begin{equation}
v_{\rho_1} \otimes \ldots \otimes v_{\rho_m} \mapsto \left\{ \begin{array}{ll}
	\left( \frac{dim(\rho)}{\# G} \right)^{2-2g-2n} v_\rho^{\otimes n} & \text{ if } \rho_1 = \ldots = \rho_n =: \rho, \\
	0 & \text{ otherwise.}
\end{array} \right.
\label{equationZcomplex}
\end{equation}
	\item Let $(X,\sigma,B)$ be the doublet of a genus $g$ Riemann surface with puncture class $a$, of associated splitting $I \sqcup J$ of the marked points. Then, $Z_{X,\sigma,B}$ is obtained from (\ref{equationZcomplex}) by applying $\Omega$ at all the inputs and outputs corresponding to the marked points in $I$.
	\item Let $(X,\sigma,B)$ be a Real Riemann surface of genus $g$. Then, $Z_{X,\sigma,B}$ is the linear map that sends $v_{\rho_1} \otimes \ldots \otimes v_{\rho_m}$ to \begin{equation}
\left\{ \begin{array}{ll}
	\left( SFS_{G,\varepsilon}(\rho) \frac{dim(\rho)}{\# G} \right)^{1-g-2n} v_\rho^{\otimes n} & \text{ if } \rho_1 = \ldots = \rho_n =: \rho = \rho^T, \\
	0 & \text{ otherwise.}
\end{array} \right.
\label{equationZreal}
\end{equation}
\end{enumerate} 
\end{lemma}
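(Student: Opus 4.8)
The plan is to translate the definition of $Z_{X,\sigma,B}$, which is given in the standard basis $(c)$, into the idempotent basis $(v_\rho)$ by evaluating the defining formula on a pure tensor $v_{\rho_1}\otimes\cdots\otimes v_{\rho_m}$ and re-expanding the output along the $v_\rho$. Two elementary consequences of the orthogonality of characters do all the work. First, since the characters of $G$ are real-valued, the coefficient of $c$ in $v_\rho$ is $\frac{dim(\rho)}{\# G}\chi_\rho(c)$ by (\ref{equationBasis1}), and a direct computation gives the \emph{input contraction}
\begin{equation}
\sum_{c \in C(G)} \frac{dim(\rho)}{\# G}\chi_\rho(c)\, f_c(\sigma) = \delta_{\rho,\sigma},
\end{equation}
using $f_c(\sigma) = \frac{\# c}{dim(\sigma)}\chi_\sigma(c)$ and $\sum_c \# c\,\chi_\rho(c)\chi_\sigma(c) = \# G\,\delta_{\rho,\sigma}$. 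Second, the \emph{output assembly}
\begin{equation}
\sum_{c \in C(G)} z_c\, f_c(\rho)\, c = \left(\frac{dim(\rho)}{\# G}\right)^{-2} v_\rho
\end{equation}
follows from $z_c f_c(\rho) = \frac{\# G}{dim(\rho)}\chi_\rho(c)$ together with (\ref{equationBasis1}). These two identities are the only analytic input.

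For case (1) I would substitute the closed formula (\ref{equationClosedFormulaComplexG}) for $H^\bullet_{g,G}(\bm c,\bm c')$ into the definition of $Z_{X,\sigma,B}$ applied to $v_{\rho_1}\otimes\cdots\otimes v_{\rho_m}$. Expanding each input $v_{\rho_i}$ in the standard basis and carrying out the sum over each $\bm c_i$ via the input contraction forces the summation variable $\sigma$ of (\ref{equationClosedFormulaComplexG}) to equal every $\rho_i$; hence the expression vanishes unless all the input representations coincide with a common $\rho$, in which case only the term $\sigma=\rho$ survives. Performing the $n$ output sums via the output assembly replaces each output slot by $\left(\frac{dim(\rho)}{\# G}\right)^{-2} v_\rho$, and collecting the factor $\left(\frac{dim(\rho)}{\# G}\right)^{2-2g}$ from (\ref{equationClosedFormulaComplexG}) with the $\left(\frac{dim(\rho)}{\# G}\right)^{-2n}$ from the outputs yields exactly (\ref{equationZcomplex}). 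Case (3) proceeds identically, now substituting Theorem \ref{theoremRealExplicitG} in place of (\ref{equationClosedFormulaComplexG}); the only difference is that the sum there runs over \emph{symmetric} representations, so the input contraction additionally forces the common representation to satisfy $\rho=\rho^T$, producing the vanishing condition in (\ref{equationZreal}).

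Case (2) I would deduce from case (1) without further computation: by Definition \ref{definitionGEpsilonComplexHurwitz} one has $H^\bullet_{g,G,\varepsilon,a}(\bm c,\bm c') = \prod_{i\in I}\varepsilon(\bm c_i)\,H^\bullet_{g,G}(\bm c,\bm c')$, where the product ranges over the slots of $I$ among both inputs and outputs. Since $\Omega(c) = \varepsilon(c)c$ by Lemma \ref{lemmaExpressionZ}, pulling the input factors out of the sum amounts to pre-composing with $\Omega$ at the input slots of $I$, while the output factors $\varepsilon(\bm c'_j)$ amount to post-composing with $\Omega$ at the output slots of $I$, which is precisely the assertion of (2). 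The one genuinely non-formal point occurs at the end of case (3): the computation first produces the exponent $\left(SFS_{G,\varepsilon}(\rho)\frac{dim(\rho)}{\# G}\right)^{1-g}\left(\frac{dim(\rho)}{\# G}\right)^{-2n}$, which must be reconciled with the claimed $\left(SFS_{G,\varepsilon}(\rho)\frac{dim(\rho)}{\# G}\right)^{1-g-2n}$. These agree because $SFS_{G,\varepsilon}(\rho)^2 = 1$: the surviving $\rho$ is symmetric with real-valued character, hence real or quaternionic, so Lemma \ref{lemmaSFS} gives $SFS_{G,\varepsilon}(\rho)=\pm1$ and the discrepancy $SFS_{G,\varepsilon}(\rho)^{2n}$ equals $1$. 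This is the step one must not skip, since it is exactly what lets the signed Frobenius–Schur factor be absorbed uniformly into the exponent.
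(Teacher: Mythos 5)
Your proposal is correct and follows essentially the same route as the paper: the paper's own proof simply declares (1) to be ``a standard computation in Complex Hurwitz theory'' (namely, substituting the closed character formula and contracting via orthogonality, exactly as you do), observes for (2) that the sign factors $\prod_{i\in I}\varepsilon(\bm{c}_i)$ of Definition \ref{definitionGEpsilonComplexHurwitz} are precisely implemented by composing with $\Omega$ at the slots of $I$, and calls (3) similar to (1). Your explicit check that the exponents in (3) agree because $SFS_{G,\varepsilon}(\rho)^{2n}=1$ (valid since the real-valued-character hypothesis excludes complex representations, so symmetric $\rho$ have $SFS_{G,\varepsilon}(\rho)=\pm 1$ by Lemma \ref{lemmaSFS}) is a detail the paper leaves implicit, and your argument correctly supplies it.
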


\begin{proof}
The formula (1) is a standard computation in Complex Hurwitz theory. Changing the order at the $i$-th pair of marked points multiplies the summands by the sign of the conjugacy class corresponding to this pair of marked points. This is exactly what the composition by $\Omega$ does. It proves the formula (2). The proof of the formula (3) is similar to (1).
\end{proof}

\begin{proposition}
The involution $\Omega$ and the vector $\mathcal{U}$ provide $(Z\mathbb{C}G,\star,e,\eta)$ with the structure of an extended Frobenius algebra.
\label{propositionExtendedFrobenius}
\end{proposition}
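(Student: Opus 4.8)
The plan is to verify directly the three defining conditions of an extended Frobenius algebra for the pair $(\Omega, \mathcal{U})$, working throughout in the idempotent basis $(v_\rho)$ and using the explicit descriptions $\Omega(v_\rho) = v_{\rho^T}$ and $\mathcal{U} = \mathfrak{L} = \sum_{\rho^T = \rho} SFS(\rho) \tfrac{\# G}{dim(\rho)} v_\rho$ furnished by Lemma \ref{lemmaExpressionZ}.

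First I would check that $\Omega$ is an involutive automorphism of $(Z\mathbb{C}G, \star, e, \eta)$. Involutivity is immediate, since tensoring twice with $\varepsilon$ is trivial, giving $(\rho^T)^T = \rho$ and hence $\Omega^2 = id$. That $\Omega$ is an algebra map follows at once from $\Omega(v_\rho) = v_{\rho^T}$ together with the idempotent relations $v_\rho \star v_{\rho'} = \delta_{\rho,\rho'} v_\rho$; equivalently, from $\Omega(c) = \varepsilon(c) c$ and the observation that the structure constants of $Z\mathbb{C}G$ in the class basis are supported on triples $(c_1,c_2,c)$ with $\varepsilon(c) = \varepsilon(c_1)\varepsilon(c_2)$, because $\varepsilon$ is a group morphism. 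It fixes the unit $e = \sum_\rho v_\rho$, and since it fixes the identity conjugacy class (as $\varepsilon(e) = 1$) it preserves the coefficient along the unit, hence the form $\eta$. Thus $\Omega$ is a Frobenius automorphism.

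Next I would verify the condition $\Omega(x \star \mathcal{U}) = x \star \mathcal{U}$ for all $x$. Writing $x = \sum_\rho x_\rho v_\rho$, the idempotent form of $\mathcal{U}$ shows that $x \star \mathcal{U}$ is supported only on the symmetric representations $\rho = \rho^T$. On such $\rho$ one has $\Omega(v_\rho) = v_{\rho^T} = v_\rho$, so $\Omega$ fixes $x \star \mathcal{U}$ termwise. This is the algebraic shadow of Corollary \ref{corollaryVectorL}.

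The main computation, and the step I expect to carry the real content, is the identity $\mathcal{U} \star \mathcal{U} = \star \circ (\Omega \otimes id)(\Delta)$. Here I would use that the characters of $G$ are real-valued, which gives $\overline{c} = c$ and hence $\Delta = \sum_c z_c\, c \otimes c$, so that $(\Omega \otimes id)(\Delta) = \sum_c \varepsilon(c) z_c\, c \otimes c$ and its product is $\sum_c \varepsilon(c) z_c\, c \star c$. Passing to the idempotent basis via $c = \sum_\rho f_c(\rho) v_\rho$ and invoking the twisted orthogonality relation $\sum_c \varepsilon(c) z_c f_c(\rho)^2 = (\tfrac{\# G}{dim(\rho)})^2 \delta_{\rho,\rho^T}$ established in the proof of Proposition \ref{propositionDegene}(d), this product equals $\sum_{\rho^T = \rho} (\tfrac{\# G}{dim(\rho)})^2 v_\rho$. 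On the other side $\mathcal{U} \star \mathcal{U} = \sum_{\rho^T = \rho} SFS(\rho)^2 (\tfrac{\# G}{dim(\rho)})^2 v_\rho$, and since every irreducible representation of $G$ is real or quaternionic (the characters being real-valued), Lemma \ref{lemmaSFS} gives $SFS(\rho) = \pm 1$, whence $SFS(\rho)^2 = 1$, for every symmetric $\rho$. The two expressions coincide, completing the verification.
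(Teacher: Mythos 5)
Your proposal is correct and follows essentially the same route as the paper: a direct verification of the three axioms in the idempotent basis, using $\Omega(v_\rho)=v_{\rho^T}$, $\mathcal{U}=\sum_{\rho^T=\rho}SFS(\rho)\tfrac{\# G}{\dim(\rho)}v_\rho$, the fact that $SFS(\rho)=\pm 1$ for symmetric $\rho$ when the characters are real-valued, and orthogonality of characters (in its twisted form for the identity $\mathcal{U}\star\mathcal{U}=\star\circ(\Omega\otimes id)(\Delta)$). The only difference is that you spell out steps the paper leaves as "straightforward" (the automorphism property via the class basis rather than via invariance of the maps $Z_{X,\sigma,B}$ for doublets), which does not change the substance of the argument.
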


\begin{proof}
We perform all the computations in the idempotent basis. It is straightforward to check that $\Omega$ is involutive and that $Z_{X,\sigma,B}$ is invariant when $\Omega$ is composed at every input and output whenever $(X,\sigma,B)$ is a doublet. Thus, $\Omega$ is an involutive automorphism of $(Z\mathbb{C}G,\star,e,\eta)$.

The relation $\Omega(v_\rho \star \mathcal{U}) = v_\rho \star \mathcal{U}$ holds since both sides equal \begin{equation}
SFS(\rho) \frac{\# G}{dim (\rho)} v_\rho
\end{equation} if $\rho$ is symmetric and vanish otherwise. Finally, both $\mathcal{U} \star \mathcal{U}$ and the product of the bivector $( \Omega \otimes id_A) (\Delta)$ equal \begin{equation}
\sum_{\rho^T = \rho} \left( \frac{dim(\rho)}{\# G} \right)^{-2} v_\rho. 
\end{equation}
\end{proof}

Proposition \ref{propositionDegene} is equivalent to a functoriality result on the linear maps $Z_{X,\sigma,B}$. It is stated in Proposition \ref{propositionFunctorZ}. The proof is written in the idempotent basis. 

\begin{proposition}
Let $(X,\sigma,B)$ be a marked connected Real Riemann surface and $(Y,\sigma,C)$ its degeneration along a pair conjugated circles. Then, $Z_{X,\sigma,B} : A^{\otimes m} \rightarrow A^{\otimes n}$ is obtained from $Z_{Y,\sigma,C} : A^{\otimes m+1} \rightarrow A^{\otimes n+1}$ by contracting the additional input and output created by the degeneration. 
\label{propositionFunctorZ}
\end{proposition}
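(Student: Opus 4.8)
The plan is to read Proposition \ref{propositionFunctorZ} as the functorial reformulation of the degeneration formulas of Proposition \ref{propositionDegene}, and to verify it by a direct computation in the idempotent basis using the explicit descriptions (\ref{equationZcomplex}) and (\ref{equationZreal}) of the maps $Z_{X,\sigma,B}$. First I would pin down what \emph{contracting the additional input and output} means algebraically. The degeneration replaces the conjugate pair of circles by the two conjugate nodes, and resolving them produces four new marked points, that is, one extra conjugate input pair and one extra conjugate output pair; this is why $Z_{Y,\sigma,C}$ goes from $A^{\otimes m+1}$ to $A^{\otimes n+1}$. The contraction is then the canonical duality pairing $\langle v_\rho, v_{\rho'} \rangle = \delta_{\rho,\rho'}$: one feeds $v_\tau$ into the extra input, reads off the $v_\tau$-coefficient of the extra output, and sums over $\tau$. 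This operation is basis-independent, and when rewritten in the standard basis it is exactly the insertion of $\sum_{c} z_c$ over a conjugacy class at the two new marked points, because the outputs of $Z$ already carry the weights $z_{\bm{c'}_j}$ by construction; this is precisely what reproduces the combinatorial factor $z_c$ of Proposition \ref{propositionDegene}.

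Since (\ref{equationZcomplex}) and (\ref{equationZreal}) are diagonal in the idempotent basis, the computation collapses: in each of the four topological types of Subsection \ref{subsectionDegenerations} the glued leg forces a single idempotent $v_\rho$ through both factors, the two scalars multiply, and the extracted coefficient contributes $1$. The only bookkeeping is on the exponents. For the Real--Real and Real--Complex types (a) and (b), $Z_{Y,\sigma,C} = Z_{Y_1,\sigma,C_1} \otimes Z_{Y_2,\sigma,C_2}$ and the contraction becomes an honest composition; the exponents add as $(1-h-2(n_1+1)) + (1-h'-2n_2) = 1-g-2n$ when $g = h+h'+1$, and as $(1-h-2(n_1+1)) + (2-2h'-2n_2) = 1-g-2n$ when $g = h+2h'$, matching (\ref{equationZreal}). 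For the Real degeneration (c), $Y$ is connected Real of genus $h$ with $g = h+2$, and the single self-contraction turns the exponent $1-h-2(n+1)$ of (\ref{equationZreal}) directly into $1-g-2n$.

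The delicate case is the Complex degeneration (d), where $Y$ is the doublet of a genus $h$ surface with $g = 2h+1$, and where the two extra marked points are distributed one $+$ and one $-$ on each component by Lemma \ref{lemmaDegeneCaseD}. Here the induced marking is non-canonical: as explained after Proposition \ref{propositionDegene}, the relevant puncture class is a single loop around one of the two created marked points, so the map $Z_{Y,\sigma,C}$ is the $\Omega$-twist of (\ref{equationZcomplex}) with exactly one insertion of $\Omega$. Using $\Omega(v_\tau) = v_{\tau^T}$ from Lemma \ref{lemmaExpressionZ}, feeding $v_\tau$ into the $\Omega$-twisted extra input forces $\tau^T = \rho$ through (\ref{equationZcomplex}), whereas extracting the $v_\tau$-coefficient of the untwisted extra output forces $\tau = \rho$. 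Together these impose $\rho = \rho^T$, which is precisely the restriction to symmetric representations present in (\ref{equationZreal}); the surviving scalar is $(dim(\rho)/\#G)^{2-2h-2(n+1)} = (dim(\rho)/\#G)^{1-g-2n}$, and since $SFS_{G,\varepsilon}(\rho) = \pm 1$ while $1-g-2n$ is even, this equals $(SFS_{G,\varepsilon}(\rho)\, dim(\rho)/\#G)^{1-g-2n}$, as required.

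I expect this interplay between the single $\Omega$-insertion and the coefficient extraction in type (d) — the algebraic shadow of the sign $\varepsilon(c)$ kept in Proposition \ref{propositionDegene}(d) — to be the main obstacle, both in justifying that exactly one $\Omega$ occurs (via Lemma \ref{lemmaDegeneCaseD} and the description of the puncture class) and in seeing the symmetry constraint $\rho = \rho^T$ emerge from the two compatibility conditions. Once that point is settled, the remaining content is the exponent arithmetic matching the four Euler-characteristic relations $g = h+h'+1$, $g = h+2h'$, $g = h+2$ and $g = 2h+1$, which I would record type by type exactly as above.
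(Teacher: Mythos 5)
Your proposal is correct and follows essentially the same route as the paper: a computation in the idempotent basis using the explicit diagonal formulas (\ref{equationZcomplex}) and (\ref{equationZreal}), with Lemma \ref{lemmaDegeneCaseD} forcing the single $\Omega$-twist in type (d), the resulting constraint $\rho = \rho^T$, the parity-of-exponent trick to insert $SFS_{G,\varepsilon}(\rho)$, and the Euler-characteristic bookkeeping on exponents. The only difference is that the paper writes out only the delicate case (d) and declares the others similar, whereas you record all four cases explicitly.
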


\begin{proof}
The four types of degenerations must be treated seperately. They are similar so that we focus on the degenerations of type (d). Let $(X,\sigma,B)$ be a connected Real Riemann surface of genus $2g+1$ with $m$ inputs and $n$ inputs. Its degeneration $(Y,\sigma,C)$ of type (d) is the doublet of a genus $g$ Riemann surface with $m+1$ inputs and $n+1$ outputs. By Lemma \ref{lemmaDegeneCaseD}, the created input and output do not belong to the same part of the partition $I \sqcup J$ induced by the marking. By Lemma \ref{lemmaExpressionZ}, the contraction of the created input and output is therefore \begin{equation}
v_{\rho_1} \otimes \ldots \otimes v_{\rho_m} \mapsto \left\{ \begin{array}{ll}
	\left( \frac{dim(\rho)}{\# G} \right)^{2-2g-2(n+1)} v_\rho^{\otimes n} & \text{ if }\rho_1 = \ldots = \rho_n =: \rho = \rho^T, \\
	0 & \text{ otherwise.}
\end{array} \right.
\end{equation} Since the power is even, one can add the factor $SFS(\rho) \in \{ \pm 1 \}$ in the parenthesis. Writting the exponent as $1-(2g+1)-2n$ identifies the contraction with $Z_{X,\sigma,B}$ as required.
\end{proof}

For instance, the identity between $\mathcal{U} \star \mathcal{U}$ and the product of the bivector $( \Omega \otimes id_A) (\Delta)$ can be interpreted as two different degenerations of a connected Real Riemann surface of genus $1$, as pictured for the cases (a) and (d) of Figure \ref{figureDegene}.

	\subsection{Degeneration of the signs}
	\label{subsectionDegenerationSigns}

The signed Real Hurwitz numbers, connected or not, have been first obtained in \cite{articleGeorgievaIonel} as relative Real Gromov-Witten invariants. In both cases, they are obtained as a signed weighted sum of ramified Real covers meeting the same conditions as in Definition \ref{definitionRealHurwitz}, with the same weight of $1 / \# \mathrm{Aut}(f)$. However, their signs do not have an explicit expression. In the present subsection, we prove that the signs introduced in \cite{articleGeorgievaIonel} coincide with those of Definition \ref{definitionRealHurwitz}.

The method is the following. The signs coincide for the Real ramified covers described in Example \ref{exampleP1}. Thus, we degenerate the target to obtain Real ramified covers over targets isomorphic to the one of Example \ref{exampleP1}. We use Lemma \ref{lemmaFuntorialityAdm} to relate the sign of the original cover to the sign of its degeneration. It leads to Corollary \ref{corollaryProductSign}. The product formula of Corollary \ref{corollaryProductSign} also holds in Real Gromov-Witten theory. Therefore, the signs coincide, see Theorem \ref{theoremComparSign}.\\

Let $(X,\sigma,B)$ be a connected marked Real Riemann surface. Contracting a pair of conjugate circles in $X \setminus B$ creates a new target $(Y,\sigma,C)$ as described in Subsection \ref{subsectionDegenerations}. The space $Y_\sigma^o$ is naturally homeomorphic to $X_\sigma^o$ with a circle removed. Thus, there is a natural inclusion $i : Y_\sigma^o \rightarrow X_\sigma^o$. Each connected component of $Y_\sigma^o$ produces an element of $H_1(Y_\sigma^o)$, which is an arbitrary admissible class for non-orientable components and a puncture class for orientable components. Denote their sum by $a$. The next lemma discusses whether $i_* a$ is admissible or not. There are four cases to consider, which correspond to the four cases of degenerations described in Subsection \ref{subsectionDegenerations}. The pictures to have in mind are represented in Figure \ref{figureDegene}.

\begin{lemma}
The homology class $i_* a \in H_1(X_\sigma^o)$ is admissible in the cases (a),(b),(d). In the case (c), there exists $x \in \mathbb{Z} / 2 \mathbb{Z}$ such that $x \delta+ i_* a$ is admissible, where $\delta$ is the homology class of the removed circle. 
\label{lemmaFuntorialityAdm}
\end{lemma}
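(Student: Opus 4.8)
The plan is to compare the first Stiefel–Whitney classes of $X_\sigma^o$ and $Y_\sigma^o$ across the inclusion $i$ and to measure the failure of admissibility against the single circle that is removed. First I would make the geometry precise: in the quotient, collapsing and resolving the conjugate pair of circles amounts to cutting $X_\sigma$ along the image circle $S$, capping the two resulting boundary circles by disks, and marking their centers. The circle $S$ is two-sided, since a $\sigma$-invariant annular neighborhood of the conjugate pair upstairs descends to an annular neighborhood of $S$. Deleting the cap centers turns the caps back into collars, so $Y_\sigma^o$ is canonically homeomorphic to the open subset $X_\sigma^o\setminus S$, and $i$ is the corresponding open inclusion with closed complement $S$. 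In particular a small loop around one of the two new punctures is carried by $i_*$ to a circle parallel to $S$, hence to the class $\delta=[S]\in H_1(X_\sigma^o)$.

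Because $i$ is a local diffeomorphism, $w_1(Y_\sigma^o)=i^*w_1(X_\sigma^o)$. The chosen class $a$ is, on each component of $Y_\sigma^o$, the Poincaré dual of $w_1$ in Borel–Moore homology: on a non-orientable component this is Definition \ref{definitionAdmissible} combined with Lemma \ref{lemmaPoincareDual}, while on an orientable component both sides vanish (the puncture class is a sum of loops around removed points, hence Borel–Moore trivial, and $w_1=0$). I would then invoke the support long exact sequence for the closed circle $S\subset X_\sigma^o$,
\[
H_1^{BM}(S)\longrightarrow H_1^{BM}(X_\sigma^o)\xrightarrow{\,r\,} H_1^{BM}(Y_\sigma^o)\longrightarrow H_0^{BM}(S),
\]
together with the naturality of Poincaré duality under open restriction, which gives $r\bigl(\gamma(X_\sigma^o)\bigr)=\gamma(Y_\sigma^o)$. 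Since $r$ also sends the Borel–Moore class of $i_*a$ to that of $a$, which is $\gamma(Y_\sigma^o)$, the classes $i_*a$ and $\gamma(X_\sigma^o)$ agree after applying $r$. Their difference thus lies in $\ker r$, which is the image of $H_1^{BM}(S)=\mathbb{Z}/2\langle[S]\rangle$; hence $i_*a-\gamma(X_\sigma^o)\in\{0,\delta\}$, so $x\delta+i_*a$ is admissible for some $x\in\mathbb{Z}/2$. This already settles case (c).

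It then remains to force $x=0$ in the cases (a), (b), (d). In cases (a) and (b) the degeneration disconnects the quotient, so $S$ separates $X_\sigma$; every loop meets $S$ an even number of times and therefore $\delta=[S]=0$ already in $H_1^{BM}(X_\sigma^o)$, so the defect vanishes automatically and $i_*a$ is admissible. In case (d) the circle $S$ is non-separating with orientable complement: cutting along $S$ produces the orientable surface $Y_\sigma^o$, so $w_1$ is detected exactly by mod-$2$ intersection with $S$ and $\gamma(X_\sigma^o)=[S]=\delta$. Here I would compute $i_*a$ directly: the puncture class contains the loop around precisely one of the two new punctures — this is exactly the content of Lemma \ref{lemmaDegeneCaseD}, which forces each component to carry one marked point labelled $+$ and one labelled $-$ — so $i_*a$ has Borel–Moore class $[S]$, matching $\gamma(X_\sigma^o)$ and giving $x=0$.

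The main obstacle I expect is bookkeeping rather than depth: one must pin down the homeomorphism $Y_\sigma^o\cong X_\sigma^o\setminus S$ and the image of the new-puncture loops carefully, and apply the support sequence and naturality of Poincaré duality in a non-orientable setting — harmless over $\mathbb{Z}/2$, but worth stating cleanly. The genuinely load-bearing inputs are the two-sidedness of $S$ (so that the complement is an honest open surface and the support sequence applies), the separating/non-separating dichotomy distinguishing (a),(b) from (c),(d), and, in case (d), Lemma \ref{lemmaDegeneCaseD}, which is precisely what prevents the two new-puncture loops from either cancelling or doubling and so makes the defect vanish.
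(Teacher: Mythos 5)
Your proof is correct, but it takes a genuinely different route from the paper's. The paper argues by choosing, case by case, explicit polygonal presentations (\ref{equationPresentation}) of the fundamental groups of the components of $Y_\sigma^o$ and of $X_\sigma^o$ (with the alternative presentation (\ref{equationPresentationBis}) of Remark~\ref{remarkPresentationBis} needed in case (d)), expressing $a$ in these generators, and tracking $i_*$ explicitly before concluding via Lemma~\ref{lemmaPoincareDual}. You instead work intrinsically: admissibility is an equality of Borel--Moore images with $\gamma = \mathrm{PD}(w_1)$, and since $w_1$ restricts under the open inclusion $i$ and Poincaré duality is natural for open restrictions, the defect of admissibility of $i_*a$ is confined to $\ker\bigl(H_1^{BM}(X_\sigma^o)\to H_1^{BM}(Y_\sigma^o)\bigr)$, which the localization sequence identifies with the span of $[S]$. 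This yields case (c) at once, reduces (a),(b) to the vanishing of the Borel--Moore class of a separating circle, and reduces (d) to the identity $\gamma(X_\sigma^o)=[S]^{BM}$ (forced by non-orientability of $X_\sigma^o$ together with orientability of the complement of $S$) combined with Lemma~\ref{lemmaDegeneCaseD} --- the same input the paper uses there. Your approach buys uniformity and a conceptual explanation of why (c) is the only ambiguous case (the only one in which $[S]^{BM}$ is neither zero nor automatically equal to $\gamma$); the paper's approach buys explicit coordinates that are reused later (e.g.\ in the proof of Corollary~\ref{corollaryProductSign}) and the sharper information in case (c) of exactly which $x$ works. Two steps you assert without proof deserve a line each, though both are standard over $\mathbb{Z}/2\mathbb{Z}$: the commutativity of the square relating $i_*$ on ordinary homology, the canonical maps $H_1\to H_1^{BM}$, and the open restriction $r$ (seen, for instance, through the identification of $H_1$ with compactly supported cohomology and extension by zero); and the fact that a two-sided circle with orientable, connected complement in a non-orientable surface detects $w_1$ by mod-$2$ intersection.
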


\begin{proof}
In the cases (a),(b),(c), we choose a presentation (\ref{equationPresentation}) of the fundamental group of each connected component of $Y_\sigma^o$. In the case (d), we have to use also the presentation (\ref{equationPresentationBis}) of Remark \ref{remarkPresentationBis}. The map $i_*$ in homology can then be described explicitly and we use Lemma \ref{lemmaPoincareDual} to conclude. The presentations are consistent with Figure \ref{figureDegene}. \begin{enumerate}[label = (\alph*)]
	\item $(Y,\sigma)$ is the union of two connected Real Riemann surfaces of respective genus $h$ and $h'$ with $g = h + h' + 1$. We choose a presentation of the two fundamental groups. Their first homology is generated respectively by the classes $\bm{\gamma},\bm{\delta},\delta$ and $\bm{\gamma'},\bm{\delta'},\delta'$. Here, $\delta$ and $\delta'$ denote the homology class of a simple loop around the new puncture. We can make a consistent choice of presentation of the fundamental group of $X_\sigma^o$, such that its first homlogy group is generated by the classes $\bm{\gamma},\bm{\delta},\bm{\gamma'},\bm{\delta'}$ and $i_*$ sends the classes $\bm{\gamma},\bm{\delta},\bm{\gamma'},\bm{\delta'}$ in $H_1(Y_\sigma^o)$ to the classes $\bm{\gamma},\bm{\delta},\bm{\gamma'},\bm{\delta'}$ in $H_1(X_\sigma^o)$. By construction, the class $a$ can be expressed as \begin{equation}
	a = \sum_{i=0}^h \gamma_i + \sum_{i=0}^{h'} \gamma'_i + \sum_{i=1}^n x_i \delta_i + \sum_{i=1}^{n'} x'_i \delta'_i + x\delta + x' \delta'
	\end{equation} with $x,x',x_i,x'_i$ in $\mathbb{Z} / 2 \mathbb{Z}$. Since the relations $\delta = \sum_i \delta_i$ and $\delta' = \sum_i \delta'_i$ hold in $H_1(Y_\sigma^o)$, we can supress $\delta$ and $\delta'$ from the expression of $a$. The image $i_* a$ is as required in Lemma \ref{lemmaPoincareDual}. 
	\item $(Y,\sigma)$ is the union of a connected Real Riemann surface and a doublet. The proof is parallel to (a).
	\item $(Y,\sigma)$ is a connected Real Riemann surface of genus $h$ with $g = h+2$. In this case, we choose the presentations such that $H_1(Y_\sigma^o)$ is generated by $\bm{\gamma},\bm{\delta},\delta,\delta'$, $H_1(X_\sigma^o)$ is generated by $\alpha,\beta,\bm{\gamma},\bm{\delta}$ and $i_*$ sends the classes $\bm{\gamma},\bm{\delta}$ to themselves. The homology class $a$ reads \begin{equation}
	a = \sum_{i=0}^h \gamma_i + \sum_{i=1}^n x_i \delta_i + x\delta + x' \delta'
	\end{equation} with $x,x',x_i$ in $\mathbb{Z} / 2 \mathbb{Z}$. Now, the relation $\delta + \delta' = \sum_i \delta_i$ holds in $H_1(Y^o_\sigma)$. Thus, $i_* a$ is admissible if $x=x'$, otherwise $\delta + i_* a$ admissible.
	\item $(Y,\sigma)$ is the double of Riemann surface of genus $h$ with $g = 2h + 1$. We choose the presentations such that $H_1(Y_\sigma^o)$ is generated by $\bm{\alpha},\bm{\beta},\bm{\delta},\delta,\delta'$ and $H_1(X_\sigma^o)$ is generated by $\bm{\alpha},\bm{\beta},\bm{\delta},\zeta,\xi$. The presentation for $X_\sigma^o$ refers to the presentation (\ref{equationPresentationBis}). The map $i_*$ sends the classes $\bm{\alpha},\bm{\beta},\bm{\delta}$ to themselves and $\delta,\delta'$ to $\zeta$. According to Lemma \ref{lemmaDegeneCaseD}, the puncture class can be written as \begin{equation}
	a = \delta + \sum_{i=1}^n x_i \delta_i
\end{equation} with $x_i$ in $\mathbb{Z} / 2 \mathbb{Z}$. Using the expression for admissible classes provided in Remark \ref{remarkPresentationBis}, the class $i_* a$ is admissible. 
\end{enumerate}
\end{proof}

Take a ramified Real cover $f : (X',\sigma') \rightarrow (X,\sigma)$ unramified over $X \setminus B$. As the complex structure of the target varies, the complex structure of the source varies so that the map $f$ remains a ramified Real cover. In particular, at the limit of a degeneration process, one obtains a ramified Real cover $\tilde{f} : (Y',\sigma') \rightarrow (Y,\sigma)$ unramified over $Y \setminus C$. In Corollary \ref{corollaryProductSign}, we use Lemma \ref{lemmaFuntorialityAdm} to express the sign of $f$ in terms of the signs of the ramified Real covers obtained after a particular sequence of degenerations of the target. This sequence of degenerations is depicted in Figure \ref{figureDegeneSign}.

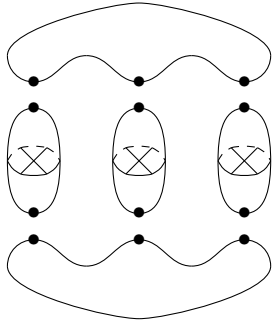
\begin{figure}[h!]
\centering
\begin{tikzpicture}[scale = 0.7]
		\node (18) at (-2.5, 0) {};
		\node (19) at (-1.5, 0) {};
		\node (20) at (-2, 1) {};
		\node (21) at (-2, -1) {};
		\node (22) at (-0.5, 0) {};
		\node (23) at (0.5, 0) {};
		\node (24) at (0, 1) {};
		\node (25) at (0, -1) {};
		\node (70) at (-2, 1) {};
		\node (71) at (-2, -1) {};
		\node (72) at (0, 1) {};
		\node (73) at (0, -1) {};
		\node (74) at (1.5, 0) {};
		\node (75) at (2.5, 0) {};
		\node (76) at (2, 1) {};
		\node (77) at (2, -1) {};
		\node (78) at (2, 1) {};
		\node (79) at (2, -1) {};
		\node (80) at (-2, 1.5) {};
		\node (81) at (0, 1.5) {};
		\node (82) at (2, 1.5) {};
		\node (83) at (-1, 2) {};
		\node (84) at (1, 2) {};
		\node (85) at (-2.5, 2) {};
		\node (86) at (2.5, 2) {};
		\node (87) at (0, 3) {};
		\node (88) at (0, -1.5) {};
		\node (89) at (2, -1.5) {};
		\node (90) at (-2, -1.5) {};
		\node (91) at (-2.5, -2) {};
		\node (92) at (2.5, -2) {};
		\node (93) at (-1, -2) {};
		\node (94) at (1, -2) {};
		\node (95) at (0, -3) {};
		\node (96) at (-2.5, 0) {};
		\node (97) at (-1.5, 0) {};
		\node (98) at (-2.25, 0.25) {};
		\node (99) at (-1.75, 0.25) {};
		\node (100) at (-2.25, -0.25) {};
		\node (101) at (-1.75, -0.25) {};
		\node (102) at (-0.5, 0) {};
		\node (103) at (0.5, 0) {};
		\node (104) at (-0.25, 0.25) {};
		\node (105) at (0.25, 0.25) {};
		\node (106) at (-0.25, -0.25) {};
		\node (107) at (0.25, -0.25) {};
		\node (108) at (1.5, 0) {};
		\node (109) at (2.5, 0) {};
		\node (110) at (1.75, 0.25) {};
		\node (111) at (2.25, 0.25) {};
		\node (112) at (1.75, -0.25) {};
		\node (113) at (2.25, -0.25) {};
		\node (114) at (-2, 1.5) {$\bullet$};
		\node (115) at (-2, 1) {$\bullet$};
		\node (116) at (0, 1.5) {$\bullet$};
		\node (117) at (0, 1) {$\bullet$};
		\node (118) at (2, 1.5) {$\bullet$};
		\node (119) at (2, 1) {$\bullet$};
		\node (120) at (-2, -1) {$\bullet$};
		\node (121) at (-2, -1.5) {$\bullet$};
		\node (122) at (0, -1) {$\bullet$};
		\node (123) at (0, -1.5) {$\bullet$};
		\node (124) at (2, -1) {$\bullet$};
		\node (125) at (2, -1.5) {$\bullet$};
		\draw [in=180, out=90] (18.center) to (20.center);
		\draw [in=90, out=0] (20.center) to (19.center);
		\draw [in=0, out=-90] (19.center) to (21.center);
		\draw [in=180, out=-90] (18.center) to (21.center);
		\draw [in=180, out=90] (22.center) to (24.center);
		\draw [in=90, out=0] (24.center) to (23.center);
		\draw [in=0, out=-90] (23.center) to (25.center);
		\draw [in=180, out=-90] (22.center) to (25.center);
		\draw [in=180, out=90] (74.center) to (76.center);
		\draw [in=90, out=0] (76.center) to (75.center);
		\draw [in=0, out=-90] (75.center) to (77.center);
		\draw [in=180, out=-90] (74.center) to (77.center);
		\draw [in=-180, out=-90] (85.center) to (80.center);
		\draw [in=180, out=0] (80.center) to (83.center);
		\draw [in=-180, out=0] (83.center) to (81.center);
		\draw [in=-180, out=0] (81.center) to (84.center);
		\draw [in=180, out=0] (84.center) to (82.center);
		\draw [in=-90, out=0] (82.center) to (86.center);
		\draw [bend left=45] (91.center) to (90.center);
		\draw [in=180, out=0] (90.center) to (93.center);
		\draw [in=180, out=0] (93.center) to (88.center);
		\draw [in=180, out=0] (88.center) to (94.center);
		\draw [in=-180, out=0] (94.center) to (89.center);
		\draw [bend left=45] (89.center) to (92.center);
		\draw [in=0, out=-90, looseness=0.50] (92.center) to (95.center);
		\draw [in=270, out=-180, looseness=0.50] (95.center) to (91.center);
		\draw [in=180, out=90, looseness=0.50] (85.center) to (87.center);
		\draw [in=90, out=0, looseness=0.50] (87.center) to (86.center);
		\draw [bend left=15, looseness=1.25, dashed] (96.center) to (98.center);
		\draw [bend left=15, looseness=0.75, dashed] (98.center) to (99.center);
		\draw [bend left=15, looseness=1.25, dashed] (99.center) to (97.center);
		\draw [bend left=15, looseness=1.25] (97.center) to (101.center);
		\draw [bend left=15, looseness=0.75] (101.center) to (100.center);
		\draw [bend left=15, looseness=1.25] (100.center) to (96.center);
		\draw (98.center) to (101.center);
		\draw (100.center) to (99.center);
		\draw [bend left=15, looseness=1.25, dashed] (102.center) to (104.center);
		\draw [bend left=15, looseness=0.75, dashed] (104.center) to (105.center);
		\draw [bend left=15, looseness=1.25, dashed] (105.center) to (103.center);
		\draw [bend left=15, looseness=1.25] (103.center) to (107.center);
		\draw [bend left=15, looseness=0.75] (107.center) to (106.center);
		\draw [bend left=15, looseness=1.25] (106.center) to (102.center);
		\draw (104.center) to (107.center);
		\draw (106.center) to (105.center);
		\draw [bend left=15, looseness=1.25, dashed] (108.center) to (110.center);
		\draw [bend left=15, looseness=0.75, dashed] (110.center) to (111.center);
		\draw [bend left=15, looseness=1.25, dashed] (111.center) to (109.center);
		\draw [bend left=15, looseness=1.25] (109.center) to (113.center);
		\draw [bend left=15, looseness=0.75] (113.center) to (112.center);
		\draw [bend left=15, looseness=1.25] (112.center) to (108.center);
		\draw (110.center) to (113.center);
		\draw (112.center) to (111.center);
\end{tikzpicture}
\caption{Degeneration of a connected Real Riemann surface used in Corollary \ref{corollaryProductSign}. The sign of a ramified Real cover with target the original surface is the product of the signs assigned to the degenerated covers with target the Real Riemann spheres.}
\label{figureDegeneSign}
\end{figure}

\begin{corollary}
Let $(X,\sigma,B)$ be a marked connected Real Riemann surface of genus $g$. Consider a sequence of degenerations of $(X,\sigma,B)$ into $(Y,\sigma,C)$  which is the union of : \begin{itemize}
	\item $g+1$ connected Real Riemann surfaces $(X_0,\sigma_0,B_0),\ldots,(X_g,\sigma_g,B_g)$ isomorphic to $(\mathbb{P}^1_\mathbb{C},\theta,\{[1:0],[0:1]\})$ as in Example \ref{exampleP1} and
	\item the doublet $(D,\sigma_D,C_D)$ of a sphere with $g+n+1$ marked points. 
\end{itemize} Let $f : (X',\sigma') \rightarrow (X,\sigma)$ a ramified Real cover, unramified over $X \setminus B$. Suppose that its ramification profile $\bm{\lambda}_i$ around $b_i^\pm$ is even for all $i$. Then \begin{equation}
\varepsilon \rho_f(a) = \varepsilon \rho_{f_0}(a_0) \ldots \varepsilon \rho_{f_g}(a_g)
\end{equation} where $a,a_0,\ldots,a_g$ are any admissible classes and $f_i$ is the degeneration of $f$ over $(X_i,\sigma_i,B_i)$.
\label{corollaryProductSign}
\end{corollary}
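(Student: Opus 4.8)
The plan is to realise the prescribed degeneration concretely, reduce the sign to a product of cross-cap contributions, and then transport these contributions to the $\mathbb{P}^1$-components by functoriality of the monodromy, using Lemma \ref{lemmaFuntorialityAdm} to guarantee that admissibility is preserved throughout. First I would fix the degeneration sequence matching Figure \ref{figureDegeneSign}. The quotient $X_\sigma$ is a closed non-orientable surface with $g+1$ cross-caps, so I may choose a presentation (\ref{equationPresentation}) adapted to the degeneration, with $h=0$ and $k=g$, i.e.\ with cross-cap generators $\gamma_0,\ldots,\gamma_g$ and no handle generators. Cutting the connecting circle of one cross-cap at a time realises $g$ degenerations of type (a), each peeling off one Real $\mathbb{P}^1$ and lowering the genus by one down to $0$; the final degeneration, applied to the remaining genus-$0$ surface $\mathbb{RP}^2$, is of type (b) and detaches the last Real $\mathbb{P}^1$ while exposing the central orientable sphere as the doublet $(D,\sigma_D,C_D)$. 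This produces exactly $g+1$ copies of $(\mathbb{P}^1_\mathbb{C},\theta,\{[1:0],[0:1]\})$ and a doublet whose quotient carries the $n$ original punctures together with the $g+1$ node punctures, for a total of $g+n+1$. The key observation to record is that each connecting circle is homotopic to the square $\gamma_j^2$ of the cross-cap it bounds, so the monodromy of $f$ around every node equals $\rho_f(\gamma_j)^2$ and is therefore even; together with the hypothesis that each $\bm{\lambda}_i$ is even, this shows that every puncture appearing at every stage of the process carries even monodromy.

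Next I would reduce the sign to the cross-cap generators. Because every puncture has even monodromy, the discussion following Definition \ref{definitionRealHurwitz} (equivalently Lemma \ref{lemmaRealOperatorProduct}) shows that $\varepsilon\rho_f(a)$ is independent of the admissible class $a$. Using Lemma \ref{lemmaPoincareDual} in the chosen presentation I may take $a=\gamma_0+\ldots+\gamma_g$, whence $\varepsilon\rho_f(a)=\prod_{j=0}^g \varepsilon(\rho_f(\gamma_j))$. Let $\tilde f\colon (Y',\sigma')\to (Y,\sigma)$ denote the degenerated cover; since $\tilde f$ is obtained by restricting $f$ to the complement $i(Y_\sigma^o)\subseteq X_\sigma^o$ of the collapsing circles, its monodromy satisfies $\rho_{\tilde f}=\rho_f\circ i_*$ up to conjugacy, and $\tilde f$ restricts to $f_j$ on the $j$-th Real $\mathbb{P}^1$ and to a map $f_D$ on the doublet. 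Applying Lemma \ref{lemmaFuntorialityAdm} along the sequence (cases (a) and then (b)), the push-forward $i_*\!\left(a_0+\ldots+a_g+a_D\right)$ of the sum of the chosen admissible classes of the components, with $a_D$ the puncture class of $D$, is an admissible class on $X_\sigma^o$. Since $\varepsilon\rho_{\tilde f}$ is a homomorphism to $\{\pm 1\}$ and hence factors over the direct sum decomposition of $H_1(Y_\sigma^o)$, independence of the admissible class gives $\varepsilon\rho_f(a)=\prod_{j=0}^g \varepsilon\rho_{f_j}(a_j)\cdot \varepsilon\rho_{f_D}(a_D)$.

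Finally I would dispose of the doublet factor and check the well-definedness of the remaining terms. On the doublet $D$, which is orientable, $\varepsilon\rho_{f_D}(a_D)$ is a product of signs $\varepsilon$ of the monodromies around the punctures in the support of $a_D$; all of these are even (the original ones by hypothesis, the node ones being squares), so $\varepsilon\rho_{f_D}(a_D)=1$. On each Real $\mathbb{P}^1$ the puncture loop equals the square of the cross-cap generator, so $\varepsilon\rho_{f_j}(a_j)=\varepsilon(\rho_{f_j}(\gamma_j))$ independently of the choice of $a_j$; this is precisely the sign computed in Example \ref{exampleP1}. Combining these identifications yields $\varepsilon\rho_f(a)=\prod_{j=0}^g \varepsilon\rho_{f_j}(a_j)$, as required.

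I expect the main obstacle to be the second step: rigorously iterating Lemma \ref{lemmaFuntorialityAdm} through the mixed sequence of type-(a) and type-(b) degenerations while simultaneously justifying the functoriality $\rho_{\tilde f}=\rho_f\circ i_*$, so that the homology classes and the monodromies of the successive components fit together consistently. Once this bookkeeping is in place, the reduction to cross-caps and the vanishing of the doublet contribution are routine.
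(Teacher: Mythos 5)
Your proposal is correct and follows essentially the same route as the paper's proof: realize the degeneration, use the functoriality of monodromy under the inclusion $i : Y_\sigma^o \to X_\sigma^o$ together with Lemma \ref{lemmaFuntorialityAdm} and the independence of the sign on the admissible class, and then kill the doublet factor by observing that all punctures of the doublet carry even monodromy. The only (harmless) variation is your justification that the node monodromies are even — you note they are squares $\rho_f(\gamma_j)^2$ of the cross-cap monodromies, whereas the paper identifies them with the monodromies classified in Example \ref{exampleP1}; both arguments are valid.
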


\begin{proof}
Denote also by $f_D$ the degeneration of $f$ over the doublet $(D,\sigma_D,C_D)$. The diagram \begin{equation}
	\begin{tikzcd}
		H_1(Y_\sigma^o) \ar[rr,"i_*"] \ar[dr] & & H_1(X_\sigma^o) \ar[dl] \\
		& \{ \pm 1 \} &
	\end{tikzcd}
	\end{equation} commutes, where the diagonal arrows are $\varepsilon \rho_{f_D}\varepsilon \rho_{f_0} \ldots \varepsilon \rho_{f_g}$ and $\varepsilon \rho_f$. Choosing admissible classes as in the statement and denoting by $a'$ the puncture class of $(Y',\sigma,C')$, we can use Lemma \ref{lemmaFuntorialityAdm} to obtain 	\begin{equation}
	\varepsilon \rho_f(a) = \varepsilon \rho_{f_D}(a')\varepsilon \rho_{f_0}(a_0) \ldots \varepsilon \rho_{f_g}(a_g).
	\end{equation} Indeed, the degenerations involved are of type (a),(b) and the left-hand side does not depend on the admissible class $a$. 
	
	We claim that the ramification profiles of $f_D$ are all even. For those that correspond to the points in $B$, it is part of the assumptions of the corollary. The others correspond to collapsed circles. The monodromy around them is the same as the monodromy around the only pair of marked points of one of the $(X_i,\sigma_i,B_i)$. Those have been classified in Example \ref{exampleP1}, and they are all even. 
	
	As a consequence, the function $\varepsilon \rho_{f_D}$ is constant equal to $1$. This proves the statement.
\end{proof}

\begin{theorem}
Let $(X,\sigma,B)$ be a marked connected Real Riemann surface and $f : (X',\sigma') \rightarrow (X,\sigma)$ a ramified Real cover, unramified over $X \setminus B$. Suppose that the ramification $\bm{\lambda}_i$ around $b_i^\pm$ is even for all $i$. Then the sign associated to $f$ in \cite{articleGeorgievaIonel} is $\varepsilon \rho_f (a)$ for any admissible class $a$. 
\label{theoremComparSign}
\end{theorem}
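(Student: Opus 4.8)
The plan is to reduce the comparison to the elementary Real covers of Example \ref{exampleP1}, where both signs can be computed by hand, and then to propagate the equality to an arbitrary $f$ by degenerating the target and using the multiplicativity of Corollary \ref{corollaryProductSign}. First I would treat the base case in which $(X,\sigma,B)$ is the Real sphere $(\mathbb{P}^1_\mathbb{C},\theta,\{[1:0],[0:1]\})$. On our side the signs are computed explicitly in Example \ref{exampleP1}: they equal $+1$ for the odd-degree connected cover $[x:y]\mapsto[x^d:y^d]$ and $-1$ for the even-degree doublet cover. On the Gromov-Witten side, the signs attached to these two covers in \cite{articleGeorgievaIonel} come from the orientation of the corresponding zero-dimensional moduli spaces of Real maps; I would invoke the genus-zero computation carried out there and verify that it produces the same two values $+1$ and $-1$. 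This settles the equality of both signs for every elementary cover over a Real sphere with two conjugate branch points.

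Next I would degenerate $(X,\sigma,B)$ as in Figure \ref{figureDegeneSign} into the disjoint union of $g+1$ copies $(X_0,\sigma_0,B_0),\dots,(X_g,\sigma_g,B_g)$ of the Real sphere of Example \ref{exampleP1} together with the doublet $(D,\sigma_D,C_D)$ of a sphere, and carry the cover $f$ along to a nodal limit with components $f_0,\dots,f_g$ and $f_D$. By Corollary \ref{corollaryProductSign},
\begin{equation}
\varepsilon\rho_f(a) = \varepsilon\rho_{f_0}(a_0)\cdots\varepsilon\rho_{f_g}(a_g),
\end{equation}
and the doublet factor $\varepsilon\rho_{f_D}$ is trivial precisely because the hypothesis that every $\bm{\lambda}_i$ is even forces all ramification profiles of $f_D$ to be even. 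Each remaining factor is the sign of a cover over a Real sphere of the type handled in the base case.

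The crux is to show that the Gromov-Witten signs of \cite{articleGeorgievaIonel} satisfy the same product formula along this degeneration, and I expect this to be the main obstacle. One must argue that the canonical orientation of the moduli space of Real maps is compatible with the gluing, so that the sign of the nodal limit is the product of the signs of its connected components, and that the relative markings created by collapsing the conjugate circles — all of even ramification — contribute no extra sign. This is a local-to-global property of the orientation underlying relative Real Gromov-Witten theory, which I would extract from the degeneration and gluing formulas of \cite{articleGeorgievaIonel}, read at the level of individual maps rather than of the numerical invariants.

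Combining the two product formulas with the base case then finishes the proof: both the Gromov-Witten sign and $\varepsilon\rho_f(a)$ factor identically over the $g+1$ Real spheres, with trivial doublet contribution on each side, so the two signs of $f$ agree. Since the evenness hypothesis makes $\varepsilon\rho_f(a)$ independent of the admissible class $a$ (by Definition \ref{definitionAdmissible} and Lemma \ref{lemmaRealOperatorProduct}), the identification holds for every admissible $a$.
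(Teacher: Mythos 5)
Your proposal is correct and follows essentially the same route as the paper: reduce via the degeneration of Corollary \ref{corollaryProductSign} to the Real sphere of Example \ref{exampleP1}, use multiplicativity of both signs, and match the two elementary covers by hand. The one step you flag as the main obstacle --- that the signs of \cite{articleGeorgievaIonel} obey the same product formula under this degeneration --- is exactly the step the paper does not prove either, but instead cites from \cite{articleGeorgievaIonel2}, so your plan to extract it from the gluing structure of relative Real Gromov--Witten theory is precisely what that reference supplies.
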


\begin{proof}
Through the degeneration of Corollary \ref{corollaryProductSign}, the signs of \cite{articleGeorgievaIonel} enjoy the same product formula, see \cite{articleGeorgievaIonel2}. Therefore, it is enough to compare them over the connected Real Riemann surface $(\mathbb{P}^1_\mathbb{C},\sigma,B)$ of Example \ref{exampleP1}. Both signs transform disjoint unions of covers into product of signs, so that it is enough to check that the signs match if the source is either a doublet or a connected Real Riemann surface. Those are classified in Example \ref{exampleP1}. With the signs of \cite{articleGeorgievaIonel}, the cover by a doublet (corresponding to the partition $(d,d)$ of $2d$) is counted negatively while the cover by a $(\mathbb{P}^1_\mathbb{C},\sigma)$ (corresponding to the partition $(d)$ of $d$ with odd $d$) is counted positively. This is exactly what we have computed in Example \ref{exampleP1}.    
\end{proof}

\section{Extension to non-empty fixed locus}	
\label{sectionExtensionNonEmpty}

In \cite{articleGeorgievaIonel}, the signed Real Hurwitz numbers are defined from Real Gromov-Witten theory using a target which may have a non-empty fixed locus. It turns out that the signed count of ramified Real covers of a connected Real Riemann surface does not depend on the structure of its fixed locus. In Subsection \ref{subsectionDefinitionSign}, we have provided a topological definition of the sign when the fixed locus of the target is empty. The purpose of the present section is to define signs when the fixed locus of the target is not empty, with the constraint that the signed weighted sum over ramified Real covers is still the signed Real Hurwitz numbers of Section \ref{subsectionDefinitionSign}. As in \cite{articleGeorgievaIonel}, we restrict to the case of covers unramified over the fixed locus, which means that we consider marked connected Real Riemann surfaces $(X,\sigma,B)$ with $X^\sigma \cap B = \emptyset$. A case with Real branch points is studied in \cite{articleItenbergZvonkine}.
	
	\subsection{$\mathbb{P}^1_\mathbb{C}$ with a pair of conjugate points}
	\label{subsectionFixedLocusP1}

In this subsection, we focus on the target $\mathbb{P}^1_\mathbb{C}$ with the involution \begin{equation}
\tilde{\theta} : [x:y] \mapsto [\overline{y}: \overline{x} ]
\end{equation} and the marked points $B_{\mathbb{P}^1_\mathbb{C}} = \{[0:1],[1:0]\}$. The fixed locus is the circle \begin{equation}
\{ [x:y ] \ | \ |x| = |y | \}.
\end{equation} Classification of ramified Real covers $f : (X',\sigma') \rightarrow (\mathbb{P}^{1}_\mathbb{C},\tilde{\theta})$ unramified over $\mathbb{P}^1_\mathbb{C} \setminus B_{\mathbb{P}^1_\mathbb{C}}$, with $(X',\sigma')$ either a connected Real Riemann surface or a doublet, is described in Example \ref{exampleP1Fixed}.

\begin{example} \begin{itemize}
	\item The cover obtained by doubling $[x:y] \mapsto [x^d:y^d]$ accounts for the partition $\lambda = (d,d)$ of $2d$. Its automorphism group has cardinality $2d$. It is counted with a sign $-1$ in \cite{articleGeorgievaIonel}.
	\item The cover \begin{equation}
	f : \left\{ \begin{array}{lll}
		(\mathbb{P}^1_\mathbb{C},\tilde{\theta}) & \rightarrow & (\mathbb{P}^1_\mathbb{C},\tilde{\theta})\\ \relax
		[x:y] & \mapsto & [x^d:y^d]
	\end{array} \right.
	\end{equation} accounts for the partition $(d)$ of $d$. Its automorphism group has cardinality $d$. It is counted with a sign $+1$ in \cite{articleGeorgievaIonel}.
	\item The cover \begin{equation}
	f : \left\{ \begin{array}{lll}
		(\mathbb{P}^1_\mathbb{C},{\theta}) & \rightarrow & (\mathbb{P}^1_\mathbb{C},\tilde{\theta})\\ \relax
		[x:y] & \mapsto & [x^d:y^d]
	\end{array} \right.
	\end{equation} for $d$ even accounts for the partition $(d)$ of $d$, where the involution $\theta$ is as in Example \ref{exampleP1}. Its automorphism group has cardinality $d$. It is counted with a sign $-1$ in \cite{articleGeorgievaIonel}.
\end{itemize} With this choice of signs, we do recover that \begin{equation}
\mathbb{R}H_{0,d}(\lambda) = \left\{ \begin{array}{ll}
	1 / d & \text{ if } \lambda =  (d) \text{ with } d \text{ odd,} \\
	-1/2d' & \text{ if } \lambda =  (d',d') \text{ with } d = 2d',\\
	0 & \text{ otherwise.}
\end{array} \right.
\end{equation}
\label{exampleP1Fixed}
\end{example}

\begin{figure}[h!]
\centering
\begin{tikzpicture}[scale = 0.7]
		\node (6) at (-0.5, 0) {};
		\node (7) at (0.5, 0) {};
		\node (8) at (0, 1) {$\bullet$};
		\node (9) at (0, -1) {$\bullet$};
		\node (10) at (2, 0) {};
		\node (11) at (3, 0) {};
		\node (12) at (2.5, 1) {$\bullet$};
		\node (13) at (2.5, -1) {$\bullet$};
		\node (14) at (2, 4) {};
		\node (15) at (3, 4) {};
		\node (16) at (2.5, 5) {};
		\node (17) at (2.5, 3) {};
		\node (18) at (-5, 4) {};
		\node (19) at (-4, 4) {};
		\node (20) at (-4.5, 5) {};
		\node (21) at (-4.5, 3) {};
		\node (22) at (-3, 4) {};
		\node (23) at (-2, 4) {};
		\node (24) at (-2.5, 5) {};
		\node (25) at (-2.5, 3) {};
		\node (26) at (-4, 0) {};
		\node (27) at (-3, 0) {};
		\node (28) at (-3.5, 1) {$\bullet$};
		\node (29) at (-3.5, -1) {$\bullet$};
		\node (30) at (-0.5, 4) {};
		\node (31) at (0.5, 4) {};
		\node (32) at (0, 5) {};
		\node (33) at (0, 3) {};
		\node (34) at (-0.5, 0) {};
		\node (35) at (0.5, 0) {};
		\node (36) at (-0.25, 0.25) {};
		\node (37) at (0.25, 0.25) {};
		\node (38) at (-0.25, -0.25) {};
		\node (39) at (0.25, -0.25) {};
		\node (40) at (2, 0) {};
		\node (41) at (3, 0) {};
		\node (42) at (2.25, 0.25) {};
		\node (43) at (2.75, 0.25) {};
		\node (44) at (2.25, -0.25) {};
		\node (45) at (2.75, -0.25) {};
		\node (46) at (-4, 0) {};
		\node (47) at (-3, 0) {};
		\node (48) at (-3.75, 0.25) {};
		\node (49) at (-3.25, 0.25) {};
		\node (50) at (-3.75, -0.25) {};
		\node (51) at (-3.25, -0.25) {};
		\node (52) at (-0.5, 4) {};
		\node (53) at (0.5, 4) {};
		\node (54) at (-0.25, 4.25) {};
		\node (55) at (0.25, 4.25) {};
		\node (56) at (-0.25, 3.75) {};
		\node (57) at (0.25, 3.75) {};
		\node (58) at (2, 4) {};
		\node (59) at (3, 4) {};
		\node (60) at (2.25, 4.25) {};
		\node (61) at (2.75, 4.25) {};
		\node (62) at (2.25, 3.75) {};
		\node (63) at (2.75, 3.75) {};
		\node (64) at (-3.5, 2.25) {};
		\node (65) at (-3.5, 1.75) {};
		\node (66) at (2.5, 2.25) {};
		\node (67) at (2.5, 1.75) {};
		\node (68) at (0, 2.25) {};
		\node (69) at (0, 1.75) {};
		\node (70) at (-4.5, 5) {$\bullet$};
		\node (71) at (-4.5, 3) {$\bullet$};
		\node (72) at (-2.5, 5) {$\bullet$};
		\node (73) at (-2.5, 3) {$\bullet$};
		\node (74) at (0, 5) {$\bullet$};
		\node (75) at (0, 3) {$\bullet$};
		\node (76) at (2.5, 5) {$\bullet$};
		\node (77) at (2.5, 3) {$\bullet$};
		\node (78) at (-3.5, 1) {};
		\node (79) at (-3.5, -1) {};
		\node (80) at (0, 1) {};
		\node (81) at (0, -1) {};
		\node (82) at (2.5, 1) {};
		\node (83) at (2.5, -1) {};
		\draw [in=180, out=90] (6.center) to (8.center);
		\draw [in=90, out=0] (8.center) to (7.center);
		\draw [in=0, out=-90] (7.center) to (9.center);
		\draw [in=180, out=-90] (6.center) to (9.center);
		\draw [in=180, out=90] (10.center) to (12.center);
		\draw [in=90, out=0] (12.center) to (11.center);
		\draw [in=0, out=-90] (11.center) to (13.center);
		\draw [in=180, out=-90] (10.center) to (13.center);
		\draw [in=180, out=90] (14.center) to (16.center);
		\draw [in=90, out=0] (16.center) to (15.center);
		\draw [in=0, out=-90] (15.center) to (17.center);
		\draw [in=180, out=-90] (14.center) to (17.center);
		\draw [in=180, out=90] (18.center) to (20.center);
		\draw [in=90, out=0] (20.center) to (19.center);
		\draw [in=0, out=-90] (19.center) to (21.center);
		\draw [in=180, out=-90] (18.center) to (21.center);
		\draw [in=180, out=90] (22.center) to (24.center);
		\draw [in=90, out=0] (24.center) to (23.center);
		\draw [in=0, out=-90] (23.center) to (25.center);
		\draw [in=180, out=-90] (22.center) to (25.center);
		\draw [in=180, out=90] (26.center) to (28.center);
		\draw [in=90, out=0] (28.center) to (27.center);
		\draw [in=0, out=-90] (27.center) to (29.center);
		\draw [in=180, out=-90] (26.center) to (29.center);
		\draw [in=180, out=90] (30.center) to (32.center);
		\draw [in=90, out=0] (32.center) to (31.center);
		\draw [in=0, out=-90] (31.center) to (33.center);
		\draw [in=180, out=-90] (30.center) to (33.center);
		\draw [bend left=15, looseness=1.25, dashed, very thick] (34.center) to (36.center);
		\draw [bend left=15, looseness=0.75, dashed, very thick] (36.center) to (37.center);
		\draw [bend left=15, looseness=1.25, dashed, very thick] (37.center) to (35.center);
		\draw [bend left=15, looseness=1.25, very thick] (35.center) to (39.center);
		\draw [bend left=15, looseness=0.75, very thick] (39.center) to (38.center);
		\draw [bend left=15, looseness=1.25, very thick] (38.center) to (34.center);
		\draw [bend left=15, looseness=1.25, dashed, very thick] (40.center) to (42.center);
		\draw [bend left=15, looseness=0.75, dashed, very thick] (42.center) to (43.center);
		\draw [bend left=15, looseness=1.25, dashed, very thick] (43.center) to (41.center);
		\draw [bend left=15, looseness=1.25, very thick] (41.center) to (45.center);
		\draw [bend left=15, looseness=0.75, very thick] (45.center) to (44.center);
		\draw [bend left=15, looseness=1.25, very thick] (44.center) to (40.center);
		\draw [bend left=15, looseness=1.25, dashed, very thick] (46.center) to (48.center);
		\draw [bend left=15, looseness=0.75, dashed, very thick] (48.center) to (49.center);
		\draw [bend left=15, looseness=1.25, dashed, very thick] (49.center) to (47.center);
		\draw [bend left=15, looseness=1.25, very thick] (47.center) to (51.center);
		\draw [bend left=15, looseness=0.75, very thick] (51.center) to (50.center);
		\draw [bend left=15, looseness=1.25, very thick] (50.center) to (46.center);
		\draw [bend left=15, looseness=1.25, dashed, very thick] (52.center) to (54.center);
		\draw [bend left=15, looseness=0.75, dashed, very thick] (54.center) to (55.center);
		\draw [bend left=15, looseness=1.25, dashed, very thick] (55.center) to (53.center);
		\draw [bend left=15, looseness=1.25, very thick] (53.center) to (57.center);
		\draw [bend left=15, looseness=0.75, very thick] (57.center) to (56.center);
		\draw [bend left=15, looseness=1.25, very thick] (56.center) to (52.center);
		\draw [bend left=15, looseness=1.25, dashed] (58.center) to (60.center);
		\draw [bend left=15, looseness=0.75, dashed] (60.center) to (61.center);
		\draw [bend left=15, looseness=1.25, dashed] (61.center) to (59.center);
		\draw [bend left=15, looseness=1.25] (59.center) to (63.center);
		\draw [bend left=15, looseness=0.75] (63.center) to (62.center);
		\draw [bend left=15, looseness=1.25] (62.center) to (58.center);
		\draw (60.center) to (63.center);
		\draw (62.center) to (61.center);
		\draw [->] (64.center) to (65.center);
		\draw [->] (66.center) to (67.center);
		\draw [->] (68.center) to (69.center);
\end{tikzpicture}
\caption{}
\label{figureCoverP1Fixed}
\end{figure}

We take the signs of \cite{articleGeorgievaIonel} as a definition in those cases. By declaring that the sign of a disjoint union of covers is the product of each of their signs, it fixes the sign of any ramified Real cover $f : (X',\sigma') \rightarrow (\mathbb{P}^{1}_\mathbb{C},\tilde{\theta})$ unramified over $\mathbb{P}^1_\mathbb{C} \setminus B_{\mathbb{P}^1_\mathbb{C}}$.

We shall describe those signs in terms of of the monodromy data of the ramified Real covers. Choose a base point $p$ \textit{in the fixed locus}. Given a ramified Real cover $f : (X',\sigma') \rightarrow (\mathbb{P}^{1}_\mathbb{C},\tilde{\theta})$ unramified over $\mathbb{P}^1_\mathbb{C} \setminus B_{\mathbb{P}^1_\mathbb{C}}$, we have a permutation $\epsilon$ of the fiber, of cycle type $\lambda$, by considering a loop around the fixed locus, and an involution $\tau$ of the fiber by considering the restriction of $\sigma'$. They satisfy \begin{equation}
\epsilon \tau = \tau \epsilon.
\end{equation} To such a pair, we associate the following numbers : \begin{itemize}
	\item Since $\epsilon$ and $\tau$ commute, $\tau$ preserves by conjugation the set $\mathcal{C}_i$ of $i$-cycles in $\epsilon$. Denote by $\tau_i$ the involution of $\mathcal{C}_i$ obtained and $m_i = \# \mathcal{C}_i$.
	\item $\tau_i$ being an involution, it has cycle type $(2,\ldots,2,1,\ldots,1)$. Denote by $k_i$ the number of $2$ appearing, so that the conjugation by $\tau$ preserves $m_i - 2k_i$ $i$-cycles of $\epsilon$. 
	\item For each cycle $c$ fixed by $\tau$, we can consider the restriction of $\tau$ on it. If $i$ is odd, it must  be the identity, but if $i= 2j$, it might be the identity or $c^j$. Denote by $a_i$ the number of those $i$-cycles on which $\tau$ is the identity, and $b_i = m_i - 2k_i - a_i$.
\end{itemize} As the proof of Lemma \ref{lemmaFormulaSignRealLocus} shows, $\sum_i k_i$ and $\sum_j b_{2j}$ correspond respectively to the number of doublets and of $(\mathbb{P}^{1}_\mathbb{C},\theta)$ in the source.

\begin{lemma}
The sign of $f$ is \begin{equation}
s(f) = (-1)^{\sum_{i=1}^\infty k_i + \sum_{j=1}^\infty b_{2j}}.
\label{equationFormulaSignRealLocus}
\end{equation}
\label{lemmaFormulaSignRealLocus}
\end{lemma}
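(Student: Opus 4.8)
The plan is to exploit the fact that the sign is multiplicative over the connected pieces of the source (connected Real Riemann surfaces and doublets), reducing the statement to the building blocks already assigned a sign in Example \ref{exampleP1Fixed}, and then to read off, from the orbit structure of the commuting pair $(\epsilon,\tau)$, which building block each piece is.

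First I would recall the geometric meaning of $\epsilon$ and $\tau$. Forgetting $\sigma'$, the map $f$ is an unramified cover of the cylinder $\mathbb{P}^1_\mathbb{C}\setminus B_{\mathbb{P}^1_\mathbb{C}}$, whose fundamental group is generated by the loop around the fixed locus; hence the connected components of $X'$ are in bijection with the cycles of $\epsilon$. A cycle of length $i$ gives a degree-$i$ connected cover of the cylinder, which compactifies to a copy of $\mathbb{P}^1_\mathbb{C}$ on which $f$ is conjugate to $[u:v]\mapsto[u^i:v^i]$. Since $\tilde\theta$ fixes the base point $p$, the involution $\sigma'$ permutes these components according to the conjugation action of $\tau$ on the cycles of $\epsilon$, that is, by the involution $\tau_i$ on $\mathcal{C}_i$.

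Next I would split the source along the $\tau$-orbits of cycles. A pair of $i$-cycles exchanged by $\tau$ produces two copies of $\mathbb{P}^1_\mathbb{C}$ swapped by $\sigma'$, i.e. a doublet of ramification profile $(i,i)$ of $2i$, contributing the sign $-1$ by the first bullet of Example \ref{exampleP1Fixed}; there are $k_i$ such pairs, giving $(-1)^{\sum_i k_i}$ and accounting for the doublets in the source. A cycle $c$ of length $i$ \emph{fixed} by $\tau$ produces a single $\mathbb{P}^1_\mathbb{C}$ carrying an anti-holomorphic involution $\sigma'|$ over $\tilde\theta$, hence a connected Real $\mathbb{P}^1_\mathbb{C}$; its sign is $+1$ if this involution is of type $\tilde\theta$ (nonempty fixed locus) and $-1$ if it is of type $\theta$ (empty fixed locus, possible only for $i$ even).

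The heart of the argument, and the step I expect to demand the most care, is detecting this type from the restriction $\tau|_c$. On the model $[u:v]\mapsto[u^i:v^i]$ the anti-holomorphic involutions lying over $\tilde\theta$ are exactly $[u:v]\mapsto[\overline v:\overline u]$ and, when $i$ is even, $[u:v]\mapsto[-\overline v:\overline u]$; writing the fiber over $p$ as $\{[\zeta^k:1]\}$ with $\zeta$ a primitive $i$-th root of unity, a direct computation shows that the first fixes every fiber point, so $\tau|_c=\mathrm{id}$, while the second sends $k\mapsto k+i/2$, so $\tau|_c=c^{i/2}=c^{j}$. Thus $\tau|_c=\mathrm{id}$ is the $\tilde\theta$-type (sign $+1$, counted by $a_i$) and $\tau|_c=c^{j}$ is the $\theta$-type (sign $-1$, counted by $b_{2j}$, only for $i=2j$). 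Collecting the three kinds of contributions yields the total sign $\prod_i(-1)^{k_i}\prod_j(-1)^{b_{2j}}=(-1)^{\sum_i k_i+\sum_j b_{2j}}$, which is (\ref{equationFormulaSignRealLocus}) and simultaneously identifies $\sum_i k_i$ and $\sum_j b_{2j}$ with the numbers of doublets and of $(\mathbb{P}^1_\mathbb{C},\theta)$-components in the source.
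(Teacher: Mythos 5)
Your proposal is correct and follows essentially the same route as the paper: both reduce, via multiplicativity of the sign under disjoint unions, to the three model covers of Example \ref{exampleP1Fixed}, and then match the orbit data of the commuting pair $(\epsilon,\tau)$ (pairs of exchanged cycles, fixed cycles with $\tau|_c=\mathrm{id}$, fixed cycles with $\tau|_c=c^j$) to the doublet, $(\mathbb{P}^1_\mathbb{C},\tilde{\theta})$ and $(\mathbb{P}^1_\mathbb{C},\theta)$ sources respectively. The only difference is presentational: the paper writes down the permutations $\epsilon,\tau$ for each geometric model directly, while you recover the geometry from the monodromy by classifying the anti-holomorphic lifts of $\tilde{\theta}$ on $[u:v]\mapsto[u^i:v^i]$ — the same dictionary, verified in the opposite direction.
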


\begin{proof}
The formula (\ref{equationFormulaSignRealLocus}) behaves well under disjoint unions of covers. Indeed, we have \begin{equation}
s(f \sqcup f') = s(f) s(f').
\end{equation} Since we have defined the signs to satisfy this formula, it is enough to prove that $s(f)$ is the required sign in the three cases of Example \ref{exampleP1Fixed}. \begin{itemize}
	\item If the source is a doublet, we can label the fiber as $\{ 1_+,1_-,\ldots,d'_+,d'_- \}$ such that \begin{equation}
	\tau = (1_+ \ \ 1_-) \ldots (d'_+ \ \ d'_-) \text{ and } \epsilon = (1_+ \ \ \ldots \ \ d'_+)(1_- \ \ \ldots \ \ d'_-).
\end{equation} The only non-vanishing number is $k_{d'} = 1$, thus $s(f) = -1$. 
	\item If the source is $(\mathbb{P}^{1}_\mathbb{C},\tilde{\theta})$, we label the fiber so that \begin{equation}
	\tau = id \text{ and } \epsilon = (1 \ \ \ldots \ \ d ).  
	\end{equation} The only non-vanishing number is $a_d = 1$, and $s(f) = 1$.
	\item Finally, if the source is $(\mathbb{P}^{1}_\mathbb{C},{\theta})$ and the degree $d = 2d'$ is even, we label the fiber so that \begin{equation}
	\tau = (1 \ \ d'+1) \ldots (d',2d) \text{ and } \epsilon = (1 \ \ \ldots \ \ d ).
	\end{equation} The only non-vanishing number is $b_d = 1$, and $s(f) = -1$.
\end{itemize} The lemma is proved.
\end{proof}

Note that even though there exists ramified Real covers corresponding to the partition $(d)$ with $d$ even, the signed Real Hurwitz number $\mathbb{R}H_{0,d}((d))$ with $d$ even vanishes. 

Not all the ramified Real covers described in Example \ref{exampleP1Fixed} truly contribute to the signed Real Hurwitz numbers. Indeed, the number \begin{equation}
\mathbb{R}H_{0,d}((d))
\end{equation} vanishes for $d$ even although there are two isomorphism classes of ramified Real covers of $(\mathbb{P}^1_\mathbb{C},\tilde{\theta})$ corresponding to the partition $(d)$. However, the signs are such that these isomorphism classes cancel each other. Thus, we say that a ramified Real cover of $(\mathbb{P}^1_\mathbb{C},\tilde{\theta},B_{\mathbb{P}^1_\mathbb{C}})$ is said to be \textit{contributing} if \begin{itemize}
	\item it is isomorphic to the cover described in Example \ref{exampleP1Fixed} for the partition $(d)$ with $d$ odd or $(d,d)$, 
	\item or it is a disjoint union of contributing ramified Real covers.
\end{itemize} One can restrict the counts to the contributing ramified Real covers of $(\mathbb{P}^1_\mathbb{C},\tilde{\theta},B_{\mathbb{P}^1_\mathbb{C}})$ without changing the value of the signed sums. The partitions of $d$ corresponding to the ramification profile of a ramified Real cover are necessarily even. The notion of contributing cover is used in Subsection \ref{subsectionFixedLocusGeneralCase}.

	\subsection{General case}
	\label{subsectionFixedLocusGeneralCase}

Let $(X,\sigma,B)$ be a marked connected Real Riemann surface, of genus $g$, whose real locus $X^\sigma$ is non-empty. It admits a \textit{canonical degeneration} obtained as follows. For each fixed circle $S$, choose a pair of conjugated circles homotopic to $S$ in $X^o$. Degenerating these pairs of conjugated circles leads to the union of copies of $(\mathbb{P}^1_\mathbb{C},\tilde{\theta},B_{\mathbb{P}^1_\mathbb{C}})$ as in Subsection \ref{subsectionFixedLocusP1} and a marked Real Riemann surface $(X',\sigma',B')$ with empty real locus. The latter can be either a doublet or a connected Real Riemann surface.

During the degeneration process, pairs of conjugated marked points are created and need to be ordered. For the rest of the subsection, we choose an ordering of those points without placing any constraint on it. If $(X',\sigma')$ is a doublet, it leads to a puncture class $a$. If $(X',\sigma')$ is a connected Real Riemann surface, we choose an admissible class $a$ for the rest of the subsection. The definitions will essentially not depend on the ordering of the marked point created during the degeneration or on the admissible class.\\

Let $f$ be a ramified Real cover of $(X,\sigma)$ unramified over $X \setminus B$. It degenerates canonically to give a ramified Real cover of $(X',\sigma',B')$ unramified over $X' \setminus B'$ and for each fixed circle $S$ in $X$ a ramified Real cover $f_S$ of $(\mathbb{P}^1_\mathbb{C},\tilde{\theta},B_{\mathbb{P}^1_\mathbb{C}})$ as in Subsection \ref{subsectionFixedLocusP1}. We define the sign of $f$ to be \begin{equation}
s(f) = \varepsilon \rho_{f'}(a) \prod_{S} s(f_S).
\label{equationSignGeneral}
\end{equation}

\begin{proposition}
Let $(X,\sigma,B)$ be as above and $\bm{\lambda} = (\bm{\lambda}_1,\ldots,\bm{\lambda}_n)$ a sequence of partitions of an integer $d$. The sum \begin{equation}
\sum_{[f]} \frac{s(f)}{\# Aut(f)}
\end{equation} over the isomorphism classes of ramified Real covers of $(X,\sigma)$ unramified over $X \setminus B$ with ramification profile $\bm{\lambda}$ equals \begin{equation}
\mathbb{R}H^\bullet_{g,d} (\bm{\lambda})
\end{equation}
\label{propositionSumFixedLocus} defined by replacing $\sigma$ by a fixed-point free involution as in Definition \ref{definitionRealHurwitz}.
\end{proposition}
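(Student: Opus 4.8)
The plan is to compute the left-hand side by reorganizing it along the canonical degeneration used to define $s(f)$ in (\ref{equationSignGeneral}), and to recognize each resulting factor as a signed count over an empty-fixed-locus target. Recall that degenerating the chosen conjugate pairs homotopic to the fixed circles writes $(X,\sigma,B)$ as the union of $(X',\sigma',B')$, with $X'^{\sigma'}=\emptyset$, and, for each of the $N$ fixed circles $S$, one copy $(\mathbb{P}^1_\mathbb{C},\tilde\theta,B_{\mathbb{P}^1_\mathbb{C}})$. A ramified Real cover $f$ of $(X,\sigma)$ unramified over $X\setminus B$ degenerates to a Real cover $f'$ of $(X',\sigma')$ together with the covers $f_S$ of the collar spheres, with matching monodromy conjugacy class at each gluing node; conversely any such compatible tuple reglues to a cover of $(X,\sigma)$. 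This is the cut-and-glue correspondence for covers of a nodal Real surface, and on the $X'$ side it produces exactly one new conjugate pair of marked points per fixed circle, so $(X',\sigma',B')$ carries the original $B$ together with $N$ extra pairs.

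First I would use this correspondence together with the automorphism bookkeeping underlying Proposition \ref{propositionDegene} --- equivalently, the factors $z_{c}=\#G/\#c$ attached to each node --- to factorize
\begin{equation}
\sum_{[f]} \frac{s(f)}{\#\mathrm{Aut}(f)} = \sum_{(c_S)} \left(\prod_S z_{c_S}\right) \left(\sum_{[f']} \frac{\varepsilon\rho_{f'}(a)}{\#\mathrm{Aut}(f')}\right) \prod_S \left(\sum_{[f_S]} \frac{s(f_S)}{\#\mathrm{Aut}(f_S)}\right),
\end{equation}
where the inner sums run over covers with prescribed monodromy $c_S$ at the corresponding node. The weight factorization is routine; the essential input is that the globally defined sign $s(f)$ splits as $\varepsilon\rho_{f'}(a)\prod_S s(f_S)$, which is precisely (\ref{equationSignGeneral}). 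By Definitions \ref{definitionRealHurwitz} and \ref{definitionDoubletHurwitz} the $(X',\sigma')$-sum is the disconnected signed Real Hurwitz number (resp. Doublet Hurwitz number) of the empty-locus target $(X',\sigma')$ with the extra insertions $c_S$, and by Example \ref{exampleP1Fixed} and Lemma \ref{lemmaFormulaSignRealLocus} each collar sum over $(\mathbb{P}^1_\mathbb{C},\tilde\theta)$ with monodromy $c_S$ equals the signed count of Example \ref{exampleP1} over the empty-locus sphere $(\mathbb{P}^1_\mathbb{C},\theta)$, i.e. $\mathbb{R}H^\bullet_{0,d}(c_S)$ (multiplicativity of $s$ over disjoint unions upgrades the connected match of the two examples to the disconnected statement).

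Substituting these identifications, the right-hand side becomes exactly the expression obtained by gluing $(X',\sigma')$ to the $N$ empty-locus spheres along the $N$ node-pairs. By repeated application of the degeneration formulas of Proposition \ref{propositionDegene}, read as gluings, this computes $\mathbb{R}H^\bullet$ of the closed, empty-fixed-locus Real surface $\hat X$ that un-degenerates to $(X',\sigma')\sqcup\bigsqcup_S(\mathbb{P}^1_\mathbb{C},\theta)$; concretely $\hat X$ is the orientation double cover of the closed surface $\widehat{X_\sigma}$ obtained from the quotient $X_\sigma$ by capping each boundary circle $X^\sigma$ with a cross-cap. To finish I would check the genus of $\hat X$ by Euler characteristic: capping a boundary circle with a M\"obius band leaves $\chi$ unchanged, so $\chi(\widehat{X_\sigma})=\chi(X_\sigma)=\chi(X)/2=1-g$, whence $\chi(\hat X)=2\chi(\widehat{X_\sigma})=2-2g$ and $\hat X$ has genus $g$ with free involution (and is connected since $\widehat{X_\sigma}$ is non-orientable for $N\geq 1$). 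Since by Theorem \ref{theoremRealExplicit} the disconnected signed Real Hurwitz numbers depend only on $g$ and $\bm{\lambda}$, this gluing equals $\mathbb{R}H^\bullet_{g,d}(\bm{\lambda})$, independently of the particular free involution chosen. The independence of $s(f)$, and hence of the count, from the ordering of the created marked points and from the admissible class $a$ follows as in Lemma \ref{lemmaRealOperatorProduct}, since odd conjugacy classes contribute zero by Corollary \ref{corollaryVectorL}.

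The hard part will be the cut-and-glue step: establishing that the globally defined sign $s(f)$ factorizes through the degeneration with exactly the $z_c$ weights of Proposition \ref{propositionDegene}, reconciling $\#\mathrm{Aut}(f)$ with the automorphisms of the pieces and of the node identifications. One must also ensure the genus is distributed correctly across the possibly several degeneration types (a)--(d) produced by the fixed circles and across the connected-versus-doublet alternative for $(X',\sigma')$; the Euler-characteristic computation above is what guarantees that the regluing lands on genus exactly $g$ regardless of these choices.
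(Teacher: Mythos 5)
Your proposal is correct and follows essentially the same route as the paper: reorganize the sum over $[f]$ into a sum over the degenerated pieces $[f'],([f_S])$ with the factorization $\# \mathrm{Aut}(f) = \# \mathrm{Aut}(f') \prod_S \# \mathrm{Aut}(f_S)$ and multiplicity $\prod_S z_{\bm{\eta}_S}$, identify each collar sum with $\mathbb{R}H^\bullet_{0,d}(\bm{\eta}_S)$ via Subsection \ref{subsectionFixedLocusP1}, identify the $(X',\sigma')$-sum with $\mathbb{R}H^\bullet_{h,d}(\bm{\lambda},\bm{\eta})$ or $H^\bullet_{h,d,a}(\bm{\lambda},\bm{\eta})$, and conclude by the degeneration formulas (a) and (b) of Proposition \ref{propositionDegene}. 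The only addition is your explicit Euler-characteristic check that the reglued empty-fixed-locus surface has genus $g$, a point the paper leaves implicit since the degeneration formulas are applied to $(X,\sigma)$ itself.
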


\begin{proof}
We transform the sum as a sum over $f',(f_S)$. The automorphisms are related by \begin{equation}
\# Aut(f) = \# Aut(f') \prod_{S} \# Aut(f_S)
\end{equation} and a given set $[f'],([f_S])$ of isomorphism classes has \begin{equation}
\prod_{S} z_{\bm{\eta}_S}
\end{equation} pre-images $[f]$, where $\bm{\eta}_S$ is the partition of $d$ corresponding to the monodromy around $S$. For each $S$, the relation \begin{equation}
\sum_{[f_S]} \frac{s(f_S)}{\# Aut(f_S)} = \mathbb{R}H^\bullet_{0,d}(\bm{\eta}_S)
\end{equation} holds by Subsection \ref{subsectionFixedLocusP1}, where the sum is over the ramified Real covers whose ramification profile at a pair of conjugated points is $\eta_S$ and unramified elsewhere. Thus, depending on whether $(X',\sigma')$ is the doublet of a genus $h$ Riemann surface or a connected Real Riemann surface of genus $h$, we obtain \begin{equation}
\sum_{[f]} \frac{s(f)}{\# Aut(f)} = \sum_{\bm{\eta}} H^\bullet_{h,d,a}(\bm{\lambda},\bm{\eta}) \prod_{S} z_{\bm{\eta}_S} \mathbb{R}H^\bullet_{0,d}(\bm{\eta}_S)
\end{equation} or \begin{equation}
\sum_{[f]} \frac{s(f)}{\# Aut(f)} = \sum_{\bm{\eta}} \mathbb{R}H^\bullet_{h,d}(\bm{\lambda},\bm{\eta}) \prod_{S} z_{\bm{\eta}_S} \mathbb{R}H^\bullet_{0,d}(\bm{\eta}_S).
\end{equation} Using degeneration formulas of types (a) and (b), see Proposition \ref{propositionDegene}, the right-hand side is $\mathbb{R}H^\bullet_{g,d}(\bm{\lambda})$ in both cases.
\end{proof}

By Proposition \ref{propositionSumFixedLocus}, the signs introduced in Subsections \ref{subsectionDefinitionSign} and \ref{subsectionFixedLocusGeneralCase} are such that the signed sum of ramified Real covers does not depend on the involution $\sigma$ on the target $X$.\\

We now explain to what extent the sign $s(f)$ defined in (\ref{equationSignGeneral}) is independent of the choices made. Recall that they consist of the ordering of the marked points created during the canonical degeneration and, if $(X',\sigma')$ is connected, of an admissible class. 

Due to the cancellations occuring in $\mathbb{R}H_{0,d}((d))$ for $d$ even, we can restrict the discussion to the Real ramified covers $f$ of $(X,\sigma)$ for which every $f_S$ is the union of covers contributing to $\mathbb{R}H_{0,d}((d))$ for $d$ odd or $\mathbb{R}H_{0,2d}((d,d))$. Under this assumption, the partition at a marked point created during the degeneration must be even. Therefore, the choice of the ordering at these marked points does not modify the signs. If one of the partitions in $\bm{\lambda}$ is odd, the sum vanishes by Proposition \ref{propositionSumFixedLocus}. Otherwise, the signs are well-defined under the assumption above.

\begin{example}
Similarly to Example \ref{exampleHyperelliptic}, consider the compactification $E$ of the plane curve \begin{equation}
\{ y^2 = x (x-w)(x-1/\overline{w}) \}
\end{equation} with $|w| \neq 1$. The ramified cover $f : (x,y) \mapsto [1:x]$ has branch points $[1:0],[0:1],[1:w],[\overline{w}:1]$. If $\mathbb{P}^1_\mathbb{C}$ is endowed with the involution $\tilde{\theta}$ instead of $\theta$, there are still two anti-holomorphic involutions $\sigma_\pm$ on $E$ for which $f$ is a ramified Real cover. On the subset $\{(x,y) \in E \ | \ |x | = 1  \}$, which is the union of two circles, $\sigma_+$ induces the identity while $\sigma_-$ switches the two circles, see Figure \ref{figureCoverEllipticFixed}.

Let us compute the signs $s(f_+)$ and $s(f_-)$. They depend on the marking of the target. We choose the ordering in the two pairs of conjugated marked points in $E$ so that the positive marked points belong to the same half of $\mathbb{P}^1_\mathbb{C}$. The ramified Real covers $f_+,f_-$ are both contributing since $f_{S,+}$ is the union of twice \begin{equation}
id : (\mathbb{P}^1_\mathbb{C},\tilde{\theta}) \rightarrow (\mathbb{P}^1_\mathbb{C},\tilde{\theta})
\end{equation} and $f_{S,-}$ is the doublet of \begin{equation}
id : \mathbb{P}^1_\mathbb{C} \rightarrow \mathbb{P}^1_\mathbb{C}.
\end{equation} The Real Riemann surface $(X',\sigma')$ being a doublet in this case, the signs $s(f_+)$ and $s(f_-)$ are well-defined. By (\ref{equationSignGeneral}) and Example \ref{exampleP1Fixed}, they are \begin{equation}
s(f_\pm ) = \pm 1.
\end{equation} The hyperelliptic involution $(x,y) \mapsto (x,-y)$ is the only non-trivial automorphism of $f_+$ and $f_-$ so that we recover \begin{equation}
\mathbb{R}H_{0,2}((2),(2)) = \frac{1}{2} - \frac{1}{2} =  0,
\end{equation} see Example \ref{exampleHyperelliptic}.
\label{exampleHyperellipticFixed}
\end{example}

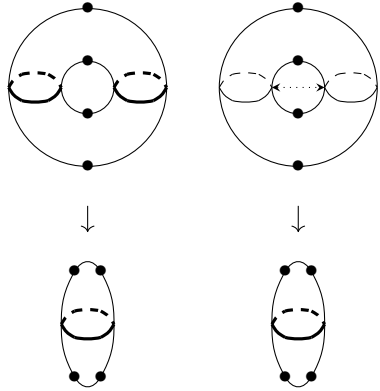
\begin{figure}[h!]
\centering
\begin{tikzpicture}[scale = 0.7]
		\node (0) at (-0.25, 1) {};
		\node (1) at (0.25, 1) {};
		\node (2) at (-0.25, -1) {};
		\node (3) at (0.25, -1) {};
		\node (4) at (-0.5, 0) {};
		\node (5) at (0.5, 0) {};
		\node (6) at (0, 6) {};
		\node (7) at (0, 5) {};
		\node (8) at (0, 3) {};
		\node (9) at (0, 4) {};
		\node (10) at (-0.5, 4.5) {};
		\node (11) at (0.5, 4.5) {};
		\node (12) at (-1.5, 4.5) {};
		\node (13) at (1.5, 4.5) {};
		\node (14) at (3.75, 1) {};
		\node (15) at (4.25, 1) {};
		\node (16) at (3.75, -1) {};
		\node (17) at (4.25, -1) {};
		\node (18) at (3.5, 0) {};
		\node (19) at (4.5, 0) {};
		\node (20) at (4, 6) {};
		\node (21) at (4, 5) {};
		\node (22) at (4, 3) {};
		\node (23) at (4, 4) {};
		\node (24) at (3.5, 4.5) {};
		\node (25) at (4.5, 4.5) {};
		\node (26) at (2.5, 4.5) {};
		\node (27) at (5.5, 4.5) {};
		\node (28) at (-0.5, 0) {};
		\node (29) at (0.5, 0) {};
		\node (30) at (-0.25, 0.25) {};
		\node (31) at (0.25, 0.25) {};
		\node (32) at (-0.25, -0.25) {};
		\node (33) at (0.25, -0.25) {};
		\node (34) at (3.5, 0) {};
		\node (35) at (4.5, 0) {};
		\node (36) at (3.75, 0.25) {};
		\node (37) at (4.25, 0.25) {};
		\node (38) at (3.75, -0.25) {};
		\node (39) at (4.25, -0.25) {};
		\node (40) at (-1.5, 4.5) {};
		\node (41) at (-0.5, 4.5) {};
		\node (42) at (-1.25, 4.75) {};
		\node (43) at (-0.75, 4.75) {};
		\node (44) at (-1.25, 4.25) {};
		\node (45) at (-0.75, 4.25) {};
		\node (46) at (0.5, 4.5) {};
		\node (47) at (1.5, 4.5) {};
		\node (48) at (0.75, 4.75) {};
		\node (49) at (1.25, 4.75) {};
		\node (50) at (0.75, 4.25) {};
		\node (51) at (1.25, 4.25) {};
		\node (52) at (2.5, 4.5) {};
		\node (53) at (3.5, 4.5) {};
		\node (54) at (2.75, 4.75) {};
		\node (55) at (3.25, 4.75) {};
		\node (56) at (2.75, 4.25) {};
		\node (57) at (3.25, 4.25) {};
		\node (58) at (4.5, 4.5) {};
		\node (59) at (5.5, 4.5) {};
		\node (60) at (4.75, 4.75) {};
		\node (61) at (5.25, 4.75) {};
		\node (62) at (4.75, 4.25) {};
		\node (63) at (5.25, 4.25) {};
		\node (64) at (0, 6) {$\bullet$};
		\node (65) at (0, 5) {$\bullet$};
		\node (66) at (0, 4) {$\bullet$};
		\node (67) at (0, 3) {$\bullet$};
		\node (68) at (4, 6) {$\bullet$};
		\node (69) at (4, 5) {$\bullet$};
		\node (70) at (4, 4) {$\bullet$};
		\node (71) at (4, 3) {$\bullet$};
		\node (72) at (-0.25, 1) {$\bullet$};
		\node (73) at (0.25, 1) {$\bullet$};
		\node (74) at (-0.25, -1) {$\bullet$};
		\node (75) at (0.25, -1) {$\bullet$};
		\node (76) at (3.75, 1) {$\bullet$};
		\node (77) at (4.25, 1) {$\bullet$};
		\node (78) at (3.75, -1) {$\bullet$};
		\node (79) at (4.25, -1) {$\bullet$};
		\node (80) at (0, 2.25) {};
		\node (81) at (0, 1.75) {};
		\node (82) at (4, 2.25) {};
		\node (83) at (4, 1.75) {};
		\draw [bend left=15] (4.center) to (0.center);
		\draw [bend left=60, looseness=1.50] (0.center) to (1.center);
		\draw [bend left=15] (1.center) to (5.center);
		\draw [bend left=15] (5.center) to (3.center);
		\draw [bend left=60, looseness=1.50] (3.center) to (2.center);
		\draw [bend left=15] (2.center) to (4.center);
		\draw [bend left=45] (10.center) to (7.center);
		\draw [bend left=45] (7.center) to (11.center);
		\draw [bend left=45] (11.center) to (9.center);
		\draw [bend left=45] (9.center) to (10.center);
		\draw [bend left=45] (12.center) to (6.center);
		\draw [bend left=45] (6.center) to (13.center);
		\draw [bend left=45] (13.center) to (8.center);
		\draw [bend left=45] (8.center) to (12.center);
		\draw [bend left=15] (18.center) to (14.center);
		\draw [bend left=60, looseness=1.50] (14.center) to (15.center);
		\draw [bend left=15] (15.center) to (19.center);
		\draw [bend left=15] (19.center) to (17.center);
		\draw [bend left=60, looseness=1.50] (17.center) to (16.center);
		\draw [bend left=15] (16.center) to (18.center);
		\draw [bend left=45] (24.center) to (21.center);
		\draw [bend left=45] (21.center) to (25.center);
		\draw [bend left=45] (25.center) to (23.center);
		\draw [bend left=45] (23.center) to (24.center);
		\draw [bend left=45] (26.center) to (20.center);
		\draw [bend left=45] (20.center) to (27.center);
		\draw [bend left=45] (27.center) to (22.center);
		\draw [bend left=45] (22.center) to (26.center);
		\draw [bend left=15, looseness=1.25, dashed, very thick] (28.center) to (30.center);
		\draw [bend left=15, looseness=0.75, dashed, very thick] (30.center) to (31.center);
		\draw [bend left=15, looseness=1.25, dashed, very thick] (31.center) to (29.center);
		\draw [bend left=15, looseness=1.25, very thick] (29.center) to (33.center);
		\draw [bend left=15, looseness=0.75, very thick] (33.center) to (32.center);
		\draw [bend left=15, looseness=1.25, very thick] (32.center) to (28.center);
		\draw [bend left=15, looseness=1.25, dashed, very thick] (34.center) to (36.center);
		\draw [bend left=15, looseness=0.75, dashed, very thick] (36.center) to (37.center);
		\draw [bend left=15, looseness=1.25, dashed, very thick] (37.center) to (35.center);
		\draw [bend left=15, looseness=1.25, very thick] (35.center) to (39.center);
		\draw [bend left=15, looseness=0.75, very thick] (39.center) to (38.center);
		\draw [bend left=15, looseness=1.25, very thick] (38.center) to (34.center);
		\draw [bend left=15, looseness=1.25, dashed, very thick] (40.center) to (42.center);
		\draw [bend left=15, looseness=0.75, dashed, very thick] (42.center) to (43.center);
		\draw [bend left=15, looseness=1.25, dashed, very thick] (43.center) to (41.center);
		\draw [bend left=15, looseness=1.25, very thick] (41.center) to (45.center);
		\draw [bend left=15, looseness=0.75, very thick] (45.center) to (44.center);
		\draw [bend left=15, looseness=1.25, very thick] (44.center) to (40.center);
		\draw [bend left=15, looseness=1.25, dashed, very thick] (46.center) to (48.center);
		\draw [bend left=15, looseness=0.75, dashed, very thick] (48.center) to (49.center);
		\draw [bend left=15, looseness=1.25, dashed, very thick] (49.center) to (47.center);
		\draw [bend left=15, looseness=1.25, very thick] (47.center) to (51.center);
		\draw [bend left=15, looseness=0.75, very thick] (51.center) to (50.center);
		\draw [bend left=15, looseness=1.25, very thick] (50.center) to (46.center);
		\draw [bend left=15, looseness=1.25, dashed] (52.center) to (54.center);
		\draw [bend left=15, looseness=0.75, dashed] (54.center) to (55.center);
		\draw [bend left=15, looseness=1.25, dashed] (55.center) to (53.center);
		\draw [bend left=15, looseness=1.25] (53.center) to (57.center);
		\draw [bend left=15, looseness=0.75] (57.center) to (56.center);
		\draw [bend left=15, looseness=1.25] (56.center) to (52.center);
		\draw [bend left=15, looseness=1.25, dashed] (58.center) to (60.center);
		\draw [bend left=15, looseness=0.75, dashed] (60.center) to (61.center);
		\draw [bend left=15, looseness=1.25, dashed] (61.center) to (59.center);
		\draw [bend left=15, looseness=1.25] (59.center) to (63.center);
		\draw [bend left=15, looseness=0.75] (63.center) to (62.center);
		\draw [bend left=15, looseness=1.25] (62.center) to (58.center);
		\draw [stealth-stealth, dotted] (53.center) to (58.center);
		\draw [->] (80.center) to (81.center);
		\draw [->] (82.center) to (83.center);
\end{tikzpicture}
\caption{Description of the two ramified Real covers of Example \ref{exampleHyperellipticFixed}. The involutions $\sigma_+$ and and $\sigma_-$ correspond respectively to left-hand side and right-hand side.}
\label{figureCoverEllipticFixed}
\end{figure}
	
\printbibliography

\end{document}